\newcommand{\shortorlong}[2]{#2}
\newcommand{\narrowbreak}{}
\newenvironment{narrowmultline}{\csname equation\endcsname}{\csname endequation\endcsname}
\newenvironment{narrowmultline*}{\csname equation*\endcsname}{\csname endequation*\endcsname}
\newcommand{\plan}[1]{}
\newcommand{\BA}[1]{}
\newcommand{\PN}[1]{}
\newcommand{\MS}[1]{}
\newcommand{\DT}[1]{}
\renewcommand{\plan}[1]{\textcolor{blue}{Plan: #1}\PackageWarning{TODO}{TODO: #1}}
\renewcommand{\BA}[1]{\textcolor{orange}{BA: #1}\PackageWarning{TODO}{TODO: #1}}
\renewcommand{\PN}[1]{\textcolor{purple}{PN: #1}\PackageWarning{TODO}{TODO: #1}}
\renewcommand{\MS}[1]{\textcolor{magenta}{MS: #1}\PackageWarning{TODO}{TODO: #1}}
\renewcommand{\DT}[1]{\textcolor{red}{DT: #1}\PackageWarning{TODO}{TODO: #1}}
\newcommand{\Sig}{\mathsf{Sig}}
\newcommand{\FinSet}{\mathsf{FinSet}}
\newcommand{\Nat}{\mathbb{N}}
\renewcommand{\U}{\mathcal{U}}
\newcommand{\Ustrict}{\U^s}
\newcommand{\steq}{\stackrel{\mathrm s}{=}}
\newcommand{\converts}{\equiv}
\newcommand{\define}{:\converts}
\newcommand{\bottomlevel}{\bot}
\newcommand{\bottom}[1]{{#1}_\bottomlevel}
\newcommand{\Set}{\ensuremath{\mathsf{Set}}}
\newcommand{\strictiso}{\cong}
\newcommand{\derivcat}[2]{\ensuremath{{#1}'_{#2}}}
\newcommand{\derivdia}[1]{\ensuremath{{#1}'}}
\newcommand{\doublederivdia}[1]{\ensuremath{{#1}''}}
\renewcommand{\L}{\mathcal{L}}
\newcommand{\M}{\mathcal{M}}
\newcommand{\N}{\mathcal{N}}
\newcommand{\Struc}[1]{\mathsf{Str}({#1})}
\newcommand{\isIso}{\mathsf{isIso}}
\newcommand{\isEquiv}{\mathsf{isEquiv}}
\newcommand{\refl}{\mathsf{refl}}
 \newcommand{\uStruc}[1]{\mathsf{uStr}({#1})}
\newcommand{\cat}{\ensuremath{\mathsf{Cat}}}
\newcommand{\VSS}{\mathsf{SSEquiv}}
\newcommand{\SplitS}[1]{\sum_{s} \left( #1 \circ s = 1\right)}
\newcommand{\streqv}{\mathsf{SEquiv}}
\newcommand{\strucequiv}{\ensuremath{\twoheadrightarrow}} %need different symbol
\newcommand{\fanout}[2]{\ensuremath{\mathsf{Fanout}_{#2}(#1)}}
\newcommand{\fanoutfun}[3]{\ensuremath{{#1}_{#3}(#2)}}
\newcommand{\IC}{\mathsf{F\Sig}}
\newcommand{\Lrg}{\L_{\text{rg}}}
\newcommand{\Lcat}{\L_{\text{cat}}}
\newcommand{\LcatE}{\L_{\text{cat+E}}}
\newcommand{\eqdef}{\define}
\newcommand{\MatK}[2]{\ensuremath{#1 #2}}
\newcommand{\copair}[2]{\langle #1,#2\rangle}
\newcommand{\congto}{\xrightarrow{\raisebox{-1ex}[0ex][0ex]{$\sim$}}}
\newcommand{\ob}[1]{\ensuremath{{#1}_0}}
\newcommand{\concat}{\ensuremath{\cdot}}
\newcommand{\idtoiso}{\ensuremath{\mathsf{idtoiso}}}
\newcommand{\idtovss}{\ensuremath{\mathsf{idtosse}}}
\newcommand{\isotovss}{\ensuremath{\mathsf{isotosse}}}
\newcommand{\idtoeqv}{\ensuremath{\mathsf{idtoeqv}}}
\newcommand{\ua}{\ensuremath{\mathsf{ua}}}
\newcommand{\istype}[1]{\mathsf{is}\mbox{-}{#1}\mbox{-}\mathsf{type}}
\newcommand{\nminusone}{\ensuremath{(n-1)}}
\newcommand{\defemph}[1]{\textbf{#1}}
\newcommand{\PropU}{\mathsf{Prop}_\U}
\renewcommand\C{\mathcal{C}}
\newcommand{\D}{\mathcal{D}}
\newcommand{\foldsiso}{\asymp}
\newcommand{\fiso}{\foldsiso}
\newcommand{\trans}[2]{{#1}_*(#2)}
\newcommand{\transfun}[1]{{#1}_*}
\newcommand{\zerotype}{\mathbf{0}}
\newcommand{\onetype}{\mathbf{1}}
  \theoremstyle{acmdefinition}
  \newtheorem{examples}[theorem]{Examples}
\def\prd#1{\@ifnextchar\bgroup{\prd@parens{#1}}{\@ifnextchar\sm{\prd@parens{#1}\@eatsm}{\prd@noparens{#1}}}}
\def\prd@parens#1{\@ifnextchar\bgroup%
  {\mathchoice{\@dprd{#1}}{\@tprd{#1}}{\@tprd{#1}}{\@tprd{#1}}\prd@parens}%
  {\mathchoice{\@dprd{#1}}{\@tprd{#1}}{\@tprd{#1}}{\@tprd{#1}}}}
\def\@eatsm\sm{\sm@parens}
\def\prd@noparens#1{\mathchoice{\@dprd@noparens{#1}}{\@tprd{#1}}{\@tprd{#1}}{\@tprd{#1}}}
\def\lprd#1{\@ifnextchar\bgroup{\@lprd{#1}\lprd}{\@@lprd{#1}}}
\def\@lprd#1{\mathchoice{{\textstyle\prod}}{\prod}{\prod}{\prod}({\textstyle #1})\;}
\def\@@lprd#1{\mathchoice{{\textstyle\prod}}{\prod}{\prod}{\prod}({\textstyle #1}),\ }
\def\tprd#1{\@tprd{#1}\@ifnextchar\bgroup{\tprd}{}}
\def\@tprd#1{\mathchoice{{\textstyle\prod_{(#1)}}}{\prod_{(#1)}}{\prod_{(#1)}}{\prod_{(#1)}}}
\def\dprd#1{\@dprd{#1}\@ifnextchar\bgroup{\dprd}{}}
\def\@dprd#1{\prod_{(#1)}\,}
\def\@dprd@noparens#1{\prod_{#1}\,}
\def\sm#1{\@ifnextchar\bgroup{\sm@parens{#1}}{\@ifnextchar\prd{\sm@parens{#1}\@eatprd}{\sm@noparens{#1}}}}
\def\sm@parens#1{\@ifnextchar\bgroup%
  {\mathchoice{\@dsm{#1}}{\@tsm{#1}}{\@tsm{#1}}{\@tsm{#1}}\sm@parens}%
  {\mathchoice{\@dsm{#1}}{\@tsm{#1}}{\@tsm{#1}}{\@tsm{#1}}}}
\def\@eatprd\prd{\prd@parens}
\def\sm@noparens#1{\mathchoice{\@dsm@noparens{#1}}{\@tsm{#1}}{\@tsm{#1}}{\@tsm{#1}}}
\def\lsm#1{\@ifnextchar\bgroup{\@lsm{#1}\lsm}{\@@lsm{#1}}}
\def\@lsm#1{\mathchoice{{\textstyle\sum}}{\sum}{\sum}{\sum}({\textstyle #1})\;}
\def\@@lsm#1{\mathchoice{{\textstyle\sum}}{\sum}{\sum}{\sum}({\textstyle #1}),\ }
\def\tsm#1{\@tsm{#1}\@ifnextchar\bgroup{\tsm}{}}
\def\@tsm#1{\mathchoice{{\textstyle\sum_{(#1)}}}{\sum_{(#1)}}{\sum_{(#1)}}{\sum_{(#1)}}}
\def\dsm#1{\@dsm{#1}\@ifnextchar\bgroup{\dsm}{}}
\def\@dsm#1{\sum_{(#1)}\,}
\def\@dsm@noparens#1{\sum_{#1}\,}
\begin{document}

%%%%%%%%%%%%%%%%%%%%%%%%%%%%%%%%%%%%%%%%%%%%%%%%%%%%%%%%%%%%%%%%%%%%%%                        
% Options
%%%%%%%%%%%%%%%%%%%%%%%%%%%%%%%%%%%%%%%%%%%%%%%%%%%%%%%%%%%%%%%%%%%%%%                        

\allowdisplaybreaks

%%%%%%%%%%%%%%%%%%%%%%%%%%%%%%%%%%%%%%%%%%%%%%%%%%%%%%%%%%%%%%%%%%%%%%                        
% End of Options
%%%%%%%%%%%%%%%%%%%%%%%%%%%%%%%%%%%%%%%%%%%%%%%%%%%%%%%%%%%%%%%%%%%%%%                        

%%%%%%%%%%%%%%%%%%%%%%%%%%%%%%%%%%%%%%%%%%%%%%%%%%%%%%%%%%%%%%%%%%%%%%                        
% Environments 
% need to defined after \begin{document}, otherwise counter [theorem]
% is not available
%%%%%%%%%%%%%%%%%%%%%%%%%%%%%%%%%%%%%%%%%%%%%%%%%%%%%%%%%%%%%%%%%%%%%%                        

\theoremstyle{acmdefinition}\newtheorem{notation}[theorem]{Notation}
\theoremstyle{acmdefinition}\newtheorem{remark}[theorem]{Remark}
\theoremstyle{acmdefinition}\newtheorem{convention}[theorem]{Convention}

%%%%%%%%%%%%%%%%%%%%%%%%%%%%%%%%%%%%%%%%%%%%%%%%%%%%%%%%%%%%%%%%%%%%%%                        
% End of Environments 
%%%%%%%%%%%%%%%%%%%%%%%%%%%%%%%%%%%%%%%%%%%%%%%%%%%%%%%%%%%%%%%%%%%%%%                        

%% Title information
\title[A Higher Structure Identity Principle]{A Higher Structure Identity Principle}         %% [Short Title] is optional;
                                        %% when present, will be used in
                                        %% header instead of Full Title.
% \titlenote{with title note}             %% \titlenote is optional;
                                        %% can be repeated if necessary;
                                        %% contents suppressed with 'anonymous'
% \subtitle{Extended abstract}                     %% \subtitle is optional
% \subtitlenote{with subtitle note}       %% \subtitlenote is optional;
                                        %% can be repeated if necessary;
                                        %% contents suppressed with 'anonymous'

%% Author information
%% Contents and number of authors suppressed with 'anonymous'.
%% Each author should be introduced by \author, followed by
%% \authornote (optional), \orcid (optional), \affiliation, and
%% \email.
%% An author may have multiple affiliations and/or emails; repeat the
%% appropriate command.
%% Many elements are not rendered, but should be provided for metadata
%% extraction tools.

%% Author with single affiliation.
\author{Benedikt Ahrens}
%\authornote{with author1 note}          %% \authornote is optional;
                                        %% can be repeated if necessary
\orcid{0000-0002-6786-4538}             %% \orcid is optional
\affiliation{
%  \position{Position1}
  \department{School of Computer Science}              %% \department is recommended
  \institution{University of Birmingham}            %% \institution is required
  \streetaddress{Street1 Address1}
%   \city{City1}
%   \state{State1}
  \postcode{Post-Code1}
  \country{United Kingdom}                    %% \country is recommended
}
\email{b.ahrens@cs.bham.ac.uk}          %% \email is recommended

\author{Paige Randall North}
% \authornote{with author1 note}          %% \authornote is optional;
                                        %% can be repeated if necessary
\orcid{0000-0001-7876-0956}             %% \orcid is optional
\affiliation{
%   \position{Position1}
  \department{Department of Mathematics}              %% \department is recommended
  \institution{The Ohio State University}            %% \institution is required
%   \streetaddress{Street1 Address1}
%   \city{City1}
%   \state{State1}
%   \postcode{Post-Code1}
  \country{United States}                    %% \country is recommended
}
\email{north.138@osu.edu}          %% \email is recommended

\author{Michael Shulman}
% \authornote{with author1 note}          %% \authornote is optional;
                                        %% can be repeated if necessary
\orcid{0000-0002-9948-6682}             %% \orcid is optional
\affiliation{
%   \position{Position1}
  \department{ Department of Mathematics}              %% \department is recommended
  \institution{ University of San Diego}            %% \institution is required
%   \streetaddress{Street1 Address1}
%   \city{City1}
%   \state{State1}
%   \postcode{Post-Code1}
  \country{United States}                    %% \country is recommended
}
\email{shulman@sandiego.edu}          %% \email is recommended

\author{Dimitris Tsementzis}
% \authornote{with author1 note}          %% \authornote is optional;
                                        %% can be repeated if necessary
% \orcid{nnnn-nnnn-nnnn-nnnn}             %% \orcid is optional
\affiliation{
%   \position{Position1}
%   \department{Department1}              %% \department is recommended
  \institution{Princeton University}            %% \institution is required
  \institution{Rutgers University}
%   \streetaddress{Street1 Address1}
%   \city{City1}
%   \state{State1}
%   \postcode{Post-Code1}
  \country{United States}                    %% \country is recommended
}
\email{dimitrios.tsementzis@gmail.com}          %% \email is recommended

%% Abstract
%% Note: \begin{abstract}...\end{abstract} environment must come
%% before \maketitle command
\begin{abstract}
  The ordinary Structure Identity Principle states that any property of set-level structures (e.g., posets, groups, rings, fields) definable in Univalent Foundations is invariant under isomorphism: more specifically, identifications of structures coincide with isomorphisms.
  We prove a version of this principle for a wide range of higher-categorical structures, adapting FOLDS-signatures to specify a general class of structures, and using two-level type theory to treat all categorical dimensions uniformly.
  As in the previously known case of 1-ca\-te\-go\-ries (which is an instance of our theory), the structures themselves must satisfy a local univalence principle, stating that identifications coincide with ``isomorphisms'' between elements of the structure.
  Our main technical achievement is a definition of such isomorphisms, which we call ``indiscernibilities,'' using only the dependency structure rather than any notion of composition.

\end{abstract}

%% 2012 ACM Computing Classification System (CSS) concepts
%% Generate at 'http://dl.acm.org/ccs/ccs.cfm'.
\begin{CCSXML}
<ccs2012>
<concept>
<concept_id>10003752.10003790.10011740</concept_id>
<concept_desc>Theory of computation~Type theory</concept_desc>
<concept_significance>500</concept_significance>
</concept>
<concept>
<concept_id>10003752.10003790.10002990</concept_id>
<concept_desc>Theory of computation~Logic and verification</concept_desc>
<concept_significance>300</concept_significance>
</concept>
<concept>
<concept_id>10003752.10003790.10003796</concept_id>
<concept_desc>Theory of computation~Constructive mathematics</concept_desc>
<concept_significance>100</concept_significance>
</concept>
</ccs2012>
\end{CCSXML}

\ccsdesc[500]{Theory of computation~Type theory}
\ccsdesc[300]{Theory of computation~Logic and verification}
\ccsdesc[100]{Theory of computation~Constructive mathematics}
%% End of generated code

%% Keywords
%% comma separated list
\keywords{homotopy type theory, univalent foundations, structure identity principle, categories, equivalence principle}  %% \keywords are mandatory in final camera-ready submission

%% \maketitle
%% Note: \maketitle command must come after title commands, author
%% commands, abstract environment, Computing Classification System
%% environment and commands, and keywords command.
\maketitle

%% \tableofcontents
%% Remove before submission
% \tableofcontents

\section{Introduction}

\subsection{The Structure Identity Principle (SIP)}
\label{sec:sip}

A fundamental logical principle is the ``indiscernibility of identicals'', i.e., equal objects have the same properties:
\begin{equation}
             x = y  \to   \forall \text{ properties } P, \left(P(x) \leftrightarrow P(y)\right) .
       \label{eq:indiscernability_of_identicals}
\end{equation}
However, properties invariant under weaker notions of sameness are also important.
For instance, group-theoretic properties satisfy a similar principle for \emph{isomorphic} groups:
\begin{equation*}
             G \cong H \to \forall \text{ group-theoretic properties } P, \left(P(G) \leftrightarrow P(H)\right);  \label{eq:indiscernability_of_identical_groups}
\end{equation*}
while category-theoretic properties are invariant even under \emph{equivalence} of categories:
\begin{equation*}
             A \simeq B \to   \forall \text{ category-theoretic properties } P, \left(P(A) \leftrightarrow P(B)\right). 
\end{equation*}

The idea is summarized in Aczel's Structure Identity Principle (SIP) \cite{Aczel_SIP}: \textit{
Isomorphic (or equivalent) mathematical structures are structurally identical;
i.e., have the same structural properties.
}
But it remains to characterize the ``structural properties'' for a given notion of structure.
For instance, Blanc \cite{Blanc} and Freyd \cite{Freyd} devised a syntax for category-theoretic properties and showed that they are invariant under equivalence.
And Makkai~\cite{MFOLDS} introduced general notions of signature and equivalence for higher-categorical structures, along with a language for their properties called First Order Logic with Dependent Sorts (FOLDS), and proved that FOLDS-properties are invariant under FOLDS-equivalence.

\subsection{The SIP in Univalent Foundations}
\label{sec:sip0}

Inspired by Makkai (see \cite[p.~1279]{voevodsky_2015}), Voevodsky conceived Univalent Foundations (UF) with a similar but more ambitious goal: a foundational language for mathematics, \emph{all} of whose constructions are invariant under equivalences of structures.
Since proofs are particular constructions, this implies a similar invariance of properties. %, as considered by Makkai, Blanc, and Freyd.

The formal language of UF and the closely related Homotopy Type Theory (HoTT) is Martin-L\"{o}f type theory, with types regarded as (higher) groupoids; see~\cite{HTT} for background and notation.
Voevodsky's univalence principle
\begin{equation}
 \mathsf{univalence} : \Pi_{(x, y: \U)}  (x =_\U y  \congto  x \simeq y)
 \label{eq:ua}
\end{equation}
ensures that all properties of \emph{types} are invariant under equivalences, since equivalences can be made into identifications (i.e., elements of the Martin-L\"{o}f identity type $x=y$):
% \begin{equation}            \Pi_{(x,y:\U)} \left( x =_\U y   \to   \Pi_{(P: \U \to \U)} \left(P(x) \simeq P(y)\right) \right) .
%                     \label{eq:transport}
% \end{equation}
% that replaces the hypothesis $x=y$ in Equation~\eqref{eq:transport} by an equivalence of types $x\simeq y$ (see also \cite{Awo13}):
\begin{equation*}           \Pi_{(x,y:\U)} \left( x \simeq y   \to   \Pi_{(P: \U \to \U)} \left(P(x) \simeq P(y)\right) \right) .
%                    \label{eq:transport-weq-types}
\end{equation*}
It was proven in~\cite[Section~9.9]{HTT} and~\cite{COQUAND20131105}\footnote{The formalization of \cite{COQUAND20131105} compares the two independent results.} that the same approach works for a wide range of mathematical structures.
Both use a notion of ``signature'' to define general classes of structures and isomorphisms, and show (using~\eqref{eq:ua} for the underlying types) that isomorphisms of structures are equivalent to identities of structures.
Thus, indiscernibility of identicals implies indiscernibility of isomorphs.
% \begin{equation*}             \Pi_{(x,y:\Struc{S})} \left( x \cong_S y   \to   \Pi_{(P: \Struc{S} \to \U)} \left(P(x) \simeq P(y)\right) \right) .
% %                    \label{eq:transport-iso-structures}
% \end{equation*}
% Here, think of $x$ and $y$ as ``sets equipped with structure''; instances include posets, monoids, groups, and fields.

\subsection{SIPs for Categories and Higher Categories}
\label{sec:sip1}

The main restriction of the SIP of \cref{sec:sip0} is that it applies only to structures that naturally form a 1-category (or 1-groupoid).
In particular, it excludes categories themselves.
An SIP for categories is proved in \cite[Theorem~6.17]{AKS13}: equivalence of (certain) categories is equivalent to identity of categories, yielding an analogous transport principle for categorical equivalences $x \simeq y$:
\begin{equation*}             \Pi_{(x,y:\cat)} \left( x \simeq y   \to   \Pi_{(P: \cat \to \U)} \left(P(x) \simeq P(y)\right) \right) .
%                    \label{eq:transport-equiv-cats}
\end{equation*}
% We review this in \Cref{sec:folds-cats}.
However, to make this true, the categories themselves must satisfy a local univalence principle saying that isomorphism is equivalent to identity for their {objects}.

In the present paper, grown out of \cite{Tsementzis_HSIP} and inspired by FOLDS, we generalize this to other (higher-)cat\-egorical structures.
We give general notions of signature, structure, univalence, and equivalence, such that equivalence of univalent structures is equivalent to identity.
Therefore, indiscernibility of identicals implies indiscernibility of equivalents:
\begin{equation*}             \Pi_{(x,y:\uStruc{\L})} \left( x \simeq_\L y   \to   \Pi_{(P: \uStruc{\L} \to \U)} \left(P(x) \simeq P(y)\right) \right) .
%                    \label{eq:transport-equiv-structures}
\end{equation*}

The primary difficulty is to give a suitable notion of iso\-mor\-phism---which we also call \emph{indiscernibility}---between \emph{elements} of a general structure, so that we can define a structure to be univalent if this notion of isomorphism of its elements is equivalent to their identity.
Our definition is a relativized form of the converse of~\eqref{eq:indiscernability_of_identicals}, the \emph{identity of indiscernibles}:
\begin{equation}
  \left(\forall \text{ properties } P, \left(P(x) \leftrightarrow P(y)\right)\right) \to (x = y) .\label{eq:identity-of-indiscernibles}
\end{equation}
As long as we generalize ``property'' to ``construction'', the global form of this principle in HoTT/UF is trivially true because we have the haecceity $P(u) \eqdef (x=u)$.
We get our notion of indiscernibility by relativizing this to a structure, allowing only constructions involving the data of a structure to which $x,y$ belong.
Thus, our univalent structures satisfy a local form of~\eqref{eq:identity-of-indiscernibles}.

\begin{remark}
  The SIP of \cref{sec:sip0} is equivalent to the statement that the 1-category \emph{of} structures is univalent in the sense of~\cite{AKS13}.
  This suggests a ``Baez--Dolan microcosm principle'' at work: the SIP \emph{for} a given kind of structure should state that the (higher) category \emph{of} such structures satisfies the necessary univalence principle for \emph{its} SIP.
  We have not attempted to state or prove this precisely.
\end{remark}

\section{A Fresh Look at Univalent Categories}\label{sec:folds-cats}

% In this section, we reformulate the theory of univalent categories as presented in~\cite{AKS13} and~\cite[Chapter 9]{HTT}. The advantage of this reformulated theory is that it will generalize to other (higher-)categorical structures.

First we review~\cite{AKS13} and~\cite[Chapter 9]{HTT} with an eye to generalization.
Both start by defining a \textbf{precategory} $\C$ as follows.
\begin{itemize}
\item A type $\ob{\C}$ of objects.  %  We write $a:A$ for $a:\ob{A}$.
\item For each $a,b:\ob{\C}$, a set $\C(a,b)$ of {morphisms}.
\item For each $a:\ob{\C}$, a morphism $1_a:\C(a,a)$.
\item For each $a,b,c:\ob{\C}$, a function
  \[  \C(b,c) \to \C(a,b) \to \C(a,c). \]
\item For each $a,b:\ob{\C}$ and $f:\C(a,b)$, we have $f = {1_b\circ f}$ and $f = {f\circ 1_a}$.
\item For each $a,b,c,d:\ob{\C}$ and $f:\C(a,b)$, $g:\C(b,c)$, $h:\C(c,d)$, we have ${h\circ (g\circ f)} ={(h\circ g)\circ f}$.
\end{itemize}
Note $\ob{\C}$ may not be a set, and for ``large'' precategories it almost never is.
For instance, $\ob{\Set}$ is the type of sets, which by univalence is a proper 1-type.
However, allowing arbitrary types of objects is problematic too. %: higher homotopy information can get in the way of defining functors that ought to exist.
For instance, while the statement ``a fully faithful and essentially surjective functor is an equivalence'' in ZF is equivalent to the axiom of choice, for precategories in HoTT/UF it is generally false, even with the axiom of choice.

The solution is to impose a ``local univalence'' condition: % ensuring that equality in the type of objects coincides with the relevant notion of ``sameness'' for objects of a category in category theory, namely isomorphism.
for any $a,b:\ob{\C}$ there is a map
$\idtoiso_{a,b}:(a=_{\ob{\C}} b) \to (a\cong b)$,
defined by path induction, and a precategory $\C$ is a \textbf{univalent category} if $\idtoiso_{a,b}$ is an equivalence for all $a,b : \ob{\C}$.
This implements the idea that ``isomorphic objects are equal''.
Note that it implies that $\ob{\C}$ is a 1-type, since its identity types are all sets (0-types).

% Let $\uCat$ denote the type of univalent categories in a universe $\U$.
One then proves, using the univalence axiom, that this ``local'' form of univalence for objects of a category implies a ``global'' form of univalence for categories themselves:

\begin{theorem}[{\cite[Theorem~6.17]{AKS13}}]\label{thm:univalence-for-categories}
  For univalent categories $\C$ and $\D$, let $\C \simeq \D$ be the type of categorical equivalences between $\C$ and $\D$; then
  \[ (\C= \D) = (\C\simeq \D). \]
\end{theorem}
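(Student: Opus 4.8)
The plan is to factor the claimed identity through two intermediate equivalences of types and then invoke univalence one last time. Concretely, I would prove $(\C = \D)\simeq(\C\cong\D)\simeq(\C\simeq\D)$, where $\C\cong\D$ denotes the type of \emph{isomorphisms} of precategories (functors that are fully faithful and whose action on objects is an equivalence of types), the first step uses only the univalence axiom for the underlying types, and the second uses the local univalence of $\C$ and $\D$. Applying $\ua$ to the composite then upgrades the resulting equivalence to the stated identity $(\C=\D)=(\C\simeq\D)$.

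For the first equivalence I would unfold a precategory as an iterated $\sum$-type --- a type $\ob{\C}$, a family of hom-sets $\C(a,b):\Set$, identity and composition data, and the unit and associativity axioms --- and apply the standard characterization of identity types of $\sum$-types (an identification of pairs is a pair of identifications, the second transported along the first). Univalence feeds into each layer: an identification $\ob{\C}=_\U\ob{\D}$ becomes an equivalence $e:\ob{\C}\simeq\ob{\D}$; because the hom-families land in $\Set$, where identifications are themselves equivalences, the transported identification of hom-data becomes a family of equivalences $\C(a,b)\simeq\D(e\,a,e\,b)$; and the transported identity and composition operations become exactly the functoriality conditions on this family. Since the unit, associativity, and set-truncation axioms are mere propositions, they are automatically transported and contribute no further data. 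Reassembling the layers gives $(\C=\D)\simeq(\C\cong\D)$.

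For the second equivalence I would use univalence of $\C$ and $\D$ to resolve precisely the failure noted above, namely that ``fully faithful and essentially surjective'' need not imply ``isomorphism'' for bare precategories. Both $\C\cong\D$ and $\C\simeq\D$ are, over a fixed underlying functor $F$, cut out by mere propositions (full faithfulness is common to both; the remaining conditions are that the object map $\ob{F}$ be an equivalence, respectively that $F$ be essentially surjective), so it suffices to show these two conditions are logically equivalent for a fixed fully faithful $F$. A fully faithful functor restricts to an equivalence on isomorphisms, so $(a\cong a')\simeq(F a\cong F a')$; composing with $\idtoiso$ and $\isotoid$ on both sides shows $\ob{F}$ is an embedding. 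Essential surjectivity gives, for each $d:\ob{\D}$, a mere witness $a$ with $F a\cong d$, which $\isotoid$ in $\D$ turns into a mere identity $F a=d$; hence the fibers of $\ob{F}$ are inhabited. Since an embedding with inhabited fibers has contractible fibers and is therefore an equivalence (and conversely), this yields $(\C\cong\D)\simeq(\C\simeq\D)$.

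The main obstacle I expect is the first equivalence: the $\sum$-type decomposition requires careful bookkeeping of transport to verify that the transported operations really do coincide with functoriality and that every axiom layer is genuinely propositional, and it is here that univalence does the essential work of converting identifications of object types and of hom-sets into the equivalences that constitute an isomorphism of precategories. Step two is more conceptual but demands care with propositional truncation, since essential surjectivity is a truncated statement and must be matched against the untruncated condition that $\ob{F}$ be an equivalence, precisely via the local univalence of the two categories.
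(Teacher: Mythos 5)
Your proof is correct, but it is worth being clear about what it proves relative to this paper: \cref{thm:univalence-for-categories} is quoted from \cite{AKS13} and not proved here, so your two-step argument is essentially a reconstruction of that original proof (factor through isomorphisms of precategories, with the univalence axiom powering the first step and local univalence the second). The paper's own route to this statement is different: it recovers it as the categorical instance of its general machinery, namely \cref{prop:uni1} (identifications of arbitrary $\L$-structures coincide with levelwise isomorphisms of structures, which for $\Lcat$-structures are exactly your isomorphisms of precategories), then \cref{thm:hsip} (for a univalent structure $M$, such isomorphisms coincide with \emph{split-surjective} equivalences, the key ingredient being the indiscernibility-reflection \cref{lem:injwrtiso}), and finally \cref{eg:vss-cat}, which invokes \cite[Lemma~6.6]{AKS13} to identify split-surjective equivalences of FOLDS-categories with categorical equivalences. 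The two skeletons are parallel, but the difference in the second step is substantive: you resolve the \emph{truncated} essential-surjectivity condition by using specifically categorical facts --- a fully faithful functor reflects isomorphisms, and $\isotoid$ turns a mere isomorphism-witness into a mere identification, so that $\ob{F}$ is an embedding with inhabited, hence contractible, fibers. That Yoneda-flavored move does not generalize: the paper states explicitly in \cref{sec:hsip} that it cannot prove the general HSIP for weak (merely essentially surjective) equivalences, which is why its general theorem is formulated for split-surjective ones and the passage to honest categorical equivalences happens only in the example. So your argument is the more elementary and self-contained proof of this particular theorem, while the paper's approach buys uniformity over arbitrary signatures at the cost of the splitness restriction.
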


We will generalize this to other categorical structures, starting with a general vocabulary for expressing such things.

\subsection{FOLDS-Signature for Categories}

In~\cite{MFOLDS}, Makkai presents a definition of category in a language called First-Order Logic with Dependent Sorts (FOLDS).
In contrast to HoTT/UF, FOLDS is not a foundational system for mathematics, but a kind of first-order logic designed for higher categorical structures.
We will not use the logical syntax of FOLDS, but we adopt and generalize its notions of signature and structure. % as the basis for our general theory of univalence.

A FOLDS-signature is an \emph{inverse category with finite fan\-outs}, whose objects are called \emph{sorts}.
The FOLDS-signature $\Lcat$ of categories is shown in \Cref{fig:signatures}, along with the related FOLDS-signatures $\Lrg$ of reflexive graphs and $\LcatE$ of categories with equality (see \Cref{subsec:equations_and_theories}).
There are some relations on the composite arrows (e.g., the two composites $I\to A\rightrightarrows O$ are equal).
The intent is that $O$ is the sort of objects, $A$ the sort of arrows, $I$ the sort of identity arrows, and $T$ the sort of composable pairs of arrows (with their composite; $T$ stands for ``triangle'').

% \begin{figure}
% %   \centering
% %   \begin{subfigure}{0.5\textwidth}
%   \begin{equation*}
% %   \vcenter{
%   \xymatrix{
%       T \ar[dr] \ar@<1mm>[dr] \ar@<-1mm>[dr] & I \ar[d] \\
%       & A \ar@<1mm>[d] \ar@<-1mm>[d] \\
%       & O} \qquad\qquad
% % \end{equation*}\caption{Without equality relation}\label{fig:lcat-noeq}
% % \end{subfigure}
% % ~
% % \begin{subfigure}{0.5\textwidth}
% % \begin{equation*}
% % \vcenter{
% \xymatrix{
%   T \ar[dr] \ar@<1mm>[dr] \ar@<-1mm>[dr] & I \ar[d] & E \ar@<1mm>[dl] \ar@<-1mm>[dl] \\
%   & A \ar@<1mm>[d] \ar@<-1mm>[d] \\
%   & O}
% \end{equation*}
% %\caption{With equality relation}\label{fig:lcat-eq}
% % \end{subfigure}
% \Description[]{Two FOLDS-signatures, one with a objects $O$, $A$, $T$, $I$, and one with an additional object $E$. Two arrows from $A$ to $O$, three arrows from $T$ to $A$, one arrow from $I$ to $A$, and two arrows from $E$ to $A$.}
%   \caption{Signatures $\Lcat$ (left) and $\LcatE$ (right) for categories without \& with equality predicate on arrows}
%   \label{fig:sig-cat}
% \end{figure}

\begin{figure}
\[
\xymatrix{
2 &
    I \ar[d]_{i}   & 
    T \ar@/^5pt/[rd]^{t_2} \ar[rd]_*-<.5em>{^{t_1}} \ar@/_10pt/[rd]_{t_0} &
    I \ar[d]^{i} &
    T \ar@/^5pt/[rd]^{t_2} \ar[rd]_*-<.5em>{^{t_1}} \ar@/_10pt/[rd]_{t_0} & 
    I \ar[d]^{i}  &
    E \ar@/_/[ld]_{e_1} \ar@/^/[ld]^{e_2}
    \\
1 &
    A \ar@/^/[d]^{d} \ar@/_/[d]_{c}  & 
      &
    A \ar@/^/[d]^{d} \ar@/_/[d]_{c} &
      &
    A \ar@/_/[d]_{c} \ar@/^/[d]^{d}
    \\
0 &
    O & 
      &
    O &
      &   
    O
}
\]
% \xymatrix{
%  T \ar[dr]_*-<.8em>{^{t_1}} \ar@<1mm>[dr]^*-<.7em>{^{t_2}} \ar@<-2mm>[dr]_*-<.8em>{^{t_0}} & I \ar[d]_{i} & E \ar@<1mm>[dl] \ar@<-1mm>[dl] \\
%   & A \ar@<1mm>[d]^{d} \ar@<-1mm>[d]_{c} \\
%   & O}

% \begin{alignat*}{3}
%  ci &\steq di  & \qquad &
% % 
%  ct_0 &\steq dt_1 
%  \\
%  dt_0 &\steq dt_2 & &
% %  
%  ct_1 &\steq ct_2
%  \\
%  de_1 &\steq de_2 & &
% %  
%  ce_1 &\steq ce_2
% \end{alignat*}

\begin{equation*}
 ci = di,
 ct_0 = dt_1,
 dt_0 = dt_2,
 ct_1 = ct_2,
 de_1 = de_2,
 ce_1 = ce_2
\end{equation*}

\caption{The FOLDS-signatures $\Lrg$, $\Lcat$, and $\LcatE$ (from left to right) for reflexive graphs, for categories, and for categories with equality predicate on arrows.
The morphisms are subject to the indicated equalities.
}
\Description[]{Three FOLDS-signatures, one with objects $O$, $A$, and $I$, one with a objects $O$, $A$, $T$, $I$, and one with an additional object $E$. Two arrows from $A$ to $O$, three arrows from $T$ to $A$, one arrow from $I$ to $A$, and two arrows from $E$ to $A$.}

\label{fig:signatures}
\end{figure}

Using a set-theoretic metatheory, Makkai defined \emph{structures} for a FOLDS-signature as certain functors into $\mathbf{Set}$.
In a dependently typed theory like HoTT/UF, however, it is more natural to interpret each sort as a {dependent} type indexed by the interpretations of all the sorts below it.
For instance, in HoTT/UF a structure $M$ for the FOLDS-signature $\Lcat$ in \Cref{fig:signatures} consists of
\begin{align*}
  MO&:\U\\
  MA&:MO\times MO \to \U\\
  MI &: \tprd{x:MO} MA(x,x) \to \U\\
  MT &: \tprd{x,y,z:MO} MA(x,y) \to MA(y,z) \to MA(x,z) \to \U
\end{align*}
% \begin{itemize}
% \item A type $MO:\U$;
% \item A family $MA:MO\times MO \to \U$;
% \item A family $MI : \prd{x:MO} MA(x,x) \to \U$; and
% \item A family $MT : \prd{x,y,z:MO} MA(x,y) \to MA(y,z) \to MA(x,z) \to \U$.
% \end{itemize}
This forms the underlying data of a category: a type of objects, types of morphisms, and properties of ``being an identity'' and ``being the composite''.
Later we will see that our univalence condition implies that the families $MI$ and $MT$ consist of propositions and $MA$ consists of sets.

\subsection{Axioms and Theories}\label{subsec:equations_and_theories}
To express which such structures are actually categories, Makkai introduced a logic over FOLDS-signatures, taking top-level sorts as ``relation symbols''. % and applying logical operations to them.
For instance, the axiom that any two composable arrows have a composite would be
\begin{narrowmultline*} \forall (x,y,z:O). \forall (f:A(x,y)). \forall (g:A(y,z)). \narrowbreak \exists (h:A(x,z)). T_{x,y,z}(f,g,h).
\end{narrowmultline*}
Any such axiom can be interpretated as a predicate on $\L$-structures in HoTT/UF.
% : given an $\L$-structure $M$, the axiom  $T$ induces the proposition stating, intuitively, that $M$ satisfies $T$.
(Note that the interpretation of $\exists$ and $\vee$ involves propositional truncation.)

The axioms of a category also involve equality of arrows (though not of objects), e.g., the uniqueness of composites
\begin{narrowmultline}
  \forall (x,y,z:O). \forall (f:A(x,y)). \forall (g:A(y,z)). \forall (h,h':A(x,z)).\narrowbreak T_{x,y,z}(f,g,h)\land T_{x,y,z}(f,g,h') \to (h=h').
  \label{eq:comp-unique}
\end{narrowmultline}
Just as in ordinary first-order logic, one can consider FOLDS either \emph{with equality} or \emph{without equality}.
In the former, equality is only allowed between elements of sorts that are ``one level below the top'' like $A$. % but not $O$.
As usual, FOLDS with equality can be embedded in FOLDS without equality by adding equality relations to the signature, as with $\LcatE$ in \Cref{fig:signatures}, along with axioms making them congruences (where all free variables should be considered to be universally quantified):
\begin{gather}
  E_{x,y} (f,f)\notag\\
  E_{x,y}(f,g) \to E_{x,y}(g,f)\notag\\
  E_{x,y}(f,g) \land E_{x,y}(g,h) \to E_{x,y}(f,h)\notag\\
  E_{x,x}(f,g) \land I_x(f) \to I_x(g) \notag\\
  E_{x,y} (f,f') \land E_{y,z} (g,g') \land E_{x,z} (h,h') \land T_{x,y,z}(f,g,h) \shortorlong{\notag}{} \narrowbreak \to T_{x,y,z}(f',g',h').\label{eq:E-cong}
\end{gather}
A model $M$ is \textbf{standard} if $ME_{x,y}$ is equivalent to the actual equality of $MA(x,y)$.\footnote{With reference to ``identity of indiscernibles'' from \cref{sec:sip1}, standard equality amounts to adding haecceities into the structure explicitly.}
Like truncatedness, this will turn out to be a special case of our univalence condition.

% \begin{remark}\label{rem:theories}
%  We do not use, in the present work, a formal account of axioms and theories.
% % since the definitions and results we present here do not rely on those notions.
%  We only mention them informally to compare our FOLDS-categories to the categories of \cite{AKS13}, and to discuss the examples of \Cref{sec:examples}.
% \end{remark}

\subsection{FOLDS-Categories in Univalent Foundations}
\label{sec:foldscat-uf}

As noted above, in HoTT/UF we must consider what truncation level $MO$ and $MA(x,y)$ should have.
In a precategory, we require the types of arrows to be sets, suggesting the following analogous definition.
%(The adjective ``$1$-univalent'' will make more sense later on.)

\begin{definition}\label{defn:fc1}
  A \textbf{$1$-univalent FOLDS-category} $M$ is
  \begin{itemize}
  \item A type $MO:\U$;
  \item A family $MA:MO\times MO \to \U$;
  \item A family $MI : \prd{x:MO} MA(x,x) \to \U$;
  \item A family $MT : \prd{x,y,z:MO} MA(x,y) \to MA(y,z) \to MA(x,z) \to \U$; and
  \item A family $ME : \prd{x,y:MO} MA(x,y) \to MA(x,y) \to \U$,
  \end{itemize}
  such that
  \begin{itemize}%[resume]
  \item Each type $MI_x(f)$, $MT_{x,y,z}(f,g,h)$, and $ME_{x,y}(f,g)$ is a proposition;
  \item Each type $MA(x,y)$ is a set;
  \item $ME_{x,y}(f,g) \leftrightarrow (f=g)$;
  \end{itemize}
  and the axioms of a category are satisfied.
\end{definition}

\begin{lemma}\label{lem:1-sat-folds-cat}
  The type of $1$-univalent FOLDS-categories is equivalent to the type of precategories.
\end{lemma}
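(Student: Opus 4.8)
The plan is to construct an equivalence of types by exhibiting maps in both directions and showing they are mutually inverse, using the characterization of identity types of $\Sigma$-types together with the fact that maps into propositions are determined up to equivalence by logical equivalence. First I would unfold both sides as iterated $\Sigma$-types. A precategory, as defined in the excerpt, is a tuple $(\ob{\C}, \hom, \mathrm{id}, \mathrm{comp}, \mathrm{unit}, \mathrm{assoc})$, while a $1$-univalent FOLDS-category is a tuple of families $(MO, MA, MI, MT, ME)$ subject to truncation conditions, the standardness condition $ME_{x,y}(f,g) \leftrightarrow (f=g)$, and the categorical axioms. The goal is a term of type $(\text{type of $1$-univalent FOLDS-categories}) \simeq (\text{type of precategories})$.

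The key observation is that the family $ME$ carries no genuine information: by the standardness clause $ME_{x,y}(f,g) \leftrightarrow (f=g)$, since $MA(x,y)$ is a set its identity types are propositions, and a proposition logically equivalent to another proposition is equivalent to it; moreover being a family of propositions is itself a proposition. So I would first peel off $ME$ together with its associated axioms (the congruence axioms of~\eqref{eq:E-cong} and the propositionality and standardness clauses) and show this entire bundle is contractible once $MA$ is known to be a set, by propositional univalence. This lets me contract $ME$ away entirely, reducing the FOLDS-category side to the data $(MO, MA, MI, MT)$ with $MA$ a set, $MI$ and $MT$ pointwise propositions, and the category axioms stated via $MI, MT$.

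Next I would match up the reduced FOLDS data with precategory data. The identity-arrow predicate $MI_x$ together with the existence-and-uniqueness axioms (``every object has an identity'' and uniqueness) says exactly that $MI_x$ is the graph of a function $1_{(-)} : \prd{x:MO} MA(x,x)$; since $MI_x(f)$ is a proposition and such an identity is unique, the type $\sm{MI}(\text{axioms for }MI)$ is equivalent to the type of a choice of $1_x : MA(x,x)$ for each $x$. The same argument applies to $MT$: the axioms that composites exist and are unique turn the relational predicate $MT$ into the graph of a composition function, so $\sm{MT}(\text{axioms for }MT)$ is equivalent to the type of a composition operation $\hom(b,c) \to \hom(a,b) \to \hom(a,c)$. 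Under these two relational-to-functional translations, the remaining categorical axioms (unit laws, associativity) become precisely the unit and associativity laws of a precategory, and they are propositions on both sides since $MA$ is a set. The main obstacle is this relation-versus-function bookkeeping: I must verify carefully that ``functional relation on a set, specified by a propositional predicate satisfying existence and uniqueness'' is equivalent to ``function'' in a way that is itself an equivalence of the full $\Sigma$-types, so that the induced correspondence on the surrounding axioms is transported correctly.

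Finally I would assemble these pointwise equivalences into an equivalence of the total $\Sigma$-types. Because each step replaces a sub-$\Sigma$ by an equivalent one (contracting $ME$, converting $MI$ to $\mathrm{id}$, converting $MT$ to $\mathrm{comp}$) with the later factors depending only on the data through already-matched components, I can chain them using that $\Sigma$ respects fiberwise equivalence and that $\sm{a:A}P(a) \simeq A$ when each $P(a)$ is contractible. The propositionality of all the axiom-factors guarantees that matching the underlying data $(MO, MA)$ and the operations is enough, with no further coherence to check. I would present this as a composite of explicit equivalences rather than a single transport, since the individual reductions are the conceptual content and the final composition is formal.
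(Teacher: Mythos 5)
Your proposal is correct and takes essentially the same route as the paper's proof: the paper likewise sets $MI_{x}(f) \eqdef (f = 1_x)$ and $MT_{x,y,z}(f,g,h) \eqdef (h = g\circ f)$ in one direction, and in the other extracts $1_x$ and $g \circ f$ as the unique witnesses of $MI$ and $MT$ guaranteed by the axioms, with $ME$ forced to be equality by standardness. Your explicit $\Sigma$-type bookkeeping (contracting the $ME$ bundle, trading propositional functional relations for functions, transporting the axioms) is a careful expansion of exactly what the paper's terse proof leaves implicit.
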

\begin{proof}
  The underlying data of $MO$ and $MA$ are the same.
  In one direction, let $MI_{x}(f) \eqdef (f = 1_x)$ and $MT_{x,y,z}(f,g,h) \eqdef (h = g\circ f)$.
  In the other, let $1_x$ be the unique $f:MA(x,x)$ with $MI_x(f)$, and $g\circ f$ the unique $h$ with $MT_{x,y,z}(f,g,h)$.
  % (Here we use the principle of unique choice, which holds in univalent foundations; see~\cite[\S3.9]{HTT}.)
\end{proof}

\begin{convention}
 Below, we sometimes abuse notation by writing $x : O$ instead of $x : MO$, and similarly for the other sorts, when the particular structure $M$ is clear from context.
\end{convention}

Let us now consider how to define ``univalent categories'' using only the FOLDS-structure. %, so that we can generalize it to other categorical structures.
The central problem is to characterize the type $(a\cong b)$ of isomorphisms in such a way as can be readily generalized to other signatures.

To start with, recall that by the Yoneda lemma, an isomorphism $\phi : a\cong b$ in a category $\C$ is equivalently a natural family of isomorphisms of sets $\phi_{x\bullet} : \C(x,a) \cong \C(x,b)$, where naturality in $x$ means that $\phi_{y\bullet}(g) \circ f = \phi_{x\bullet}(g\circ f)$.
In the language of FOLDS-categories the operation $\circ$ is replaced by the relation $T$, with a new variable $h$ for the composite $g\circ f$:
\begin{itemize}
\item For each $x:O$, an isomorphism $\phi_{x\bullet}:A(x,a) \cong A(x,b)$; and \label{item:foldsiso1}
\item For each $x,y:O$, $f:A(x,y)$, $g:A(y,a)$, and $h:A(x,a)$, we have\label{item:foldsiso2}
  $T_{x,y,a}(f,g,h) \leftrightarrow T_{x,y,b}(f,\phi_{y\bullet}(g),\phi_{x\bullet}(h))$.
\end{itemize}
This looks more promising, but it still privileges one of the variables of $A$ over the other, and the relation $T$ over $I$ (and $E$).
More natural from the FOLDS point of view is to give equivalences between hom-sets with $a$ and $b$ substituted into \emph{all} possible ``collections of holes'':
\begin{align}
&\text{For any $x:O$, an isomorphism $\phi_{x\bullet}:A(x,a) \cong A(x,b)$;} \label{item:foldsiso1a} \\
&\text{For any $z:O$, an isomorphism $\phi_{\bullet z}:A(a,z) \cong A(b,z)$;} \label{item:foldsiso1b} \\
&\text{An isomorphism $\phi_{\bullet\bullet}:A(a,a) \cong A(b,b)$.}\label{item:foldsiso1c}
\end{align}
and similar logical equivalences between all possible ``relations with holes'':
\begin{align}
  T_{x,y,a}(f,g,h) &\leftrightarrow T_{x,y,b}(f,\phi_{y\bullet}(g),\phi_{x\bullet}(h)) \label{eq:Txya}\\
  T_{x,a,z}(f,g,h) &\leftrightarrow T_{x,b,z}(\phi_{x\bullet}(f),\phi_{\bullet z}(g),h) \label{eq:Txaz}\\
  T_{a,z,w}(f,g,h) &\leftrightarrow T_{b,z,w}(\phi_{\bullet z}(f),g,\phi_{\bullet w}(h)) \label{eq:Tazw}\\
  T_{x,a,a}(f,g,h) &\leftrightarrow T_{x,b,b}(\phi_{x\bullet}(f),\phi_{\bullet\bullet}(g),\phi_{x\bullet}(h)) \label{eq:Txaa}\\
  T_{a,x,a}(f,g,h) &\leftrightarrow T_{b,x,b}(\phi_{\bullet x}(f),\phi_{x\bullet}(g),\phi_{\bullet\bullet}(h)) \label{eq:Taxa}\\
  T_{a,a,x}(f,g,h) &\leftrightarrow T_{b,b,x}(\phi_{\bullet\bullet}(f),\phi_{\bullet x}(g),\phi_{\bullet x}(h)) \label{eq:Taax}\\
  T_{a,a,a}(f,g,h) &\leftrightarrow T_{b,b,b}(\phi_{\bullet\bullet}(f),\phi_{\bullet\bullet}(g),\phi_{\bullet\bullet}(h)) \label{eq:Taaa}\\
  I_{a,a}(f) &\leftrightarrow I_{b,b}(\phi_{\bullet\bullet}(f)) \label{eq:Iaa}  \\
   E_{x,a}(f,g) &\leftrightarrow E_{x,b}(\phi_{x\bullet }(f),\phi_{x\bullet}(g)) \label{eq:Exa}\\
   E_{a,x}(f,g) &\leftrightarrow E_{b,x}(\phi_{\bullet x}(f),\phi_{\bullet x}(g)) \label{eq:Eax}\\
   E_{a,a}(f,g) &\leftrightarrow E_{b,b}(\phi_{\bullet \bullet}(f),\phi_{\bullet \bullet}(g)) \label{eq:Eaa}
\end{align}
for all $x,y,z,w:O$ and $f,g,h$ of appropriate types.
Fortunately, the additional data here are redundant. Since $\phi_{x\bullet}$, $\phi_{\bullet z}$, and $\phi_{\bullet\bullet}$ preserve identities and $E$ is equivalent to identity by hypothesis, we obtain~\eqref{eq:Exa} to~\eqref{eq:Eaa}.
Just as~\eqref{eq:Txya} means the $\phi_{x\bullet}$ form a natural isomorphism,~\eqref{eq:Tazw} means the $\phi_{\bullet z}$ form a natural isomorphism, and~\eqref{eq:Txaz} means these natural isomorphisms arise from the same $\phi : a\cong b$.
Given this, any one of~\cref{eq:Txaa,eq:Taxa,eq:Taax} ensures that $\phi_{\bullet\bullet}$ is conjugation by $\phi$, and then the other two follow automatically, as do~\cref{eq:Taaa,eq:Iaa}.
This suggests the following definition.

\begin{definition}\label{defn:folds-iso-obj}
  For $a,b$ objects of a $1$-univalent FOLDS-category, an \defemph{indiscernibility} from $a$ to $b$ consists of data as in \cref{item:foldsiso1a,item:foldsiso1b,item:foldsiso1c} satisfying \crefrange{eq:Txya}{eq:Iaa}.
  We write $a \foldsiso b$ for the type of such indiscernibilities.
\end{definition}

\begin{theorem}\label{thm:iso-foldsiso}
  In any $1$-univalent FOLDS-category, the type of indiscernibilities from $a$ to $b$ is equivalent to the type of isomorphisms $a\cong b$.\qed
\end{theorem}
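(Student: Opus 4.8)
The plan is to construct explicit mutually inverse maps between $(a\cong b)$ and $(a\foldsiso b)$. Throughout I work in the precategory supplied by \cref{lem:1-sat-folds-cat}, so that I may write $\circ$ and $1_a$ and read each $T_{x,y,z}(f,g,h)$ as the equation $h=g\circ f$. Both types are sets: $a\cong b$ because $\isIso$ is a proposition and the hom-types are sets, and $a\foldsiso b$ because it is a $\Sigma$-type whose data are dependent products of equivalences between sets and whose remaining components, the conditions \eqref{eq:Txya}--\eqref{eq:Iaa}, are propositions. Consequently an identification of two indiscernibilities reduces to identifications of the three families $\phi_{x\bullet}$, $\phi_{\bullet z}$, $\phi_{\bullet\bullet}$ alone, the conditions contributing nothing once the data agree.

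For the forward map I send an isomorphism $\phi\colon a\cong b$, with inverse $\phi^{-1}$, to the indiscernibility given by post-composition, pre-composition, and conjugation: $\phi_{x\bullet}(g)\eqdef\phi\circ g$, $\phi_{\bullet z}(f)\eqdef f\circ\phi^{-1}$, and $\phi_{\bullet\bullet}(g)\eqdef\phi\circ g\circ\phi^{-1}$. Each is an equivalence, with inverse built from $\phi^{-1}$ (respectively $\phi$), and each condition \eqref{eq:Txya}--\eqref{eq:Iaa} unfolds to a routine identity of composites provable from associativity and the inverse laws; since these conditions are propositions, no coherence data are required.

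For the backward map, given an indiscernibility I set $\alpha\eqdef\phi_{a\bullet}(1_a)\colon A(a,b)$. Instantiating the naturality condition \eqref{eq:Txya} at $y\eqdef a$ on the true instance $T_{x,a,a}(g,1_a,g)$ yields $\phi_{x\bullet}(g)=\alpha\circ g$ for every $x$ and every $g\colon A(x,a)$, so each $\phi_{x\bullet}$ is post-composition by $\alpha$ and is therefore an equivalence $A(x,a)\simeq A(x,b)$. Taking $x\eqdef b$ produces a unique $\beta$ with $\alpha\circ\beta=1_b$; taking $x\eqdef a$, where post-composition by $\alpha$ is in particular injective, promotes the computation $\alpha\circ(\beta\circ\alpha)=(\alpha\circ\beta)\circ\alpha=\alpha=\alpha\circ 1_a$ to $\beta\circ\alpha=1_a$. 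Hence $\alpha$ is an isomorphism, and this is the value of the backward map.

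The round-trip on $(a\cong b)$ returns $\phi_{a\bullet}(1_a)=\phi\circ 1_a=\phi$, an identity of morphisms and so of isomorphisms. For the round-trip on $(a\foldsiso b)$ I must recover all three families from $\alpha$ and $\beta=\alpha^{-1}$: the relation $\phi_{x\bullet}(g)=\alpha\circ g$ is already in hand; feeding the true instance $T_{b,a,z}(\beta,g,g\circ\beta)$ into \eqref{eq:Txaz} and using $\phi_{b\bullet}(\beta)=\alpha\circ\beta=1_b$ gives $\phi_{\bullet z}(g)=g\circ\beta$; and feeding $T_{b,a,a}(\beta,g,g\circ\beta)$ into \eqref{eq:Txaa} gives $\phi_{\bullet\bullet}(g)=\alpha\circ g\circ\beta$. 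These are precisely the components produced by the forward map on $\alpha$, so by function extensionality, propositionality of $\isEquiv$, and sethood of the hom-types the two indiscernibilities are identified. The main obstacle is this backward direction: seeing that the single family $\phi_{x\bullet}$, via \eqref{eq:Txya}, already forces $\alpha$ to be invertible---a Yoneda-style argument conducted entirely with $T$ and identities---and then choosing the right instances of \eqref{eq:Txaz} and \eqref{eq:Txaa} to pin down the redundant families $\phi_{\bullet z}$ and $\phi_{\bullet\bullet}$; the remaining conditions are never needed, matching the redundancy observed before \cref{defn:folds-iso-obj}.
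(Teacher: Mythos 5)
Your proposal is correct and follows essentially the same route as the paper, which leaves \cref{thm:iso-foldsiso} to the Yoneda-style discussion preceding \cref{defn:folds-iso-obj}: naturality \eqref{eq:Txya} forces each $\phi_{x\bullet}$ to be post-composition by $\alpha \eqdef \phi_{a\bullet}(1_a)$, which is then invertible, and \eqref{eq:Txaz} and \eqref{eq:Txaa} pin down the redundant families $\phi_{\bullet z}$ and $\phi_{\bullet\bullet}$ as pre-composition and conjugation. Your write-up merely makes explicit the mutually inverse maps and the sethood/propositionality bookkeeping that the paper treats as routine.
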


%Thus, if we define:

\begin{definition}
  A \defemph{$0$-univalent FOLDS-category} is a 1-uni\-val\-ent FOLDS-category such that for all $a,b:MO$, the canonical map $(a=b)\to (a\foldsiso b)$ is an equivalence.
\end{definition}

%We can conclude:

\begin{theorem}
  A 1-univalent FOLDS-category is $0$-univalent iff its corresponding precategory is a univalent category. \qed
\end{theorem}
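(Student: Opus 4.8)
The plan is to reduce both conditions to the invertibility of canonical maps out of the identity type $(a=b)$ and then transfer along the equivalence of \Cref{thm:iso-foldsiso}. By \Cref{lem:1-sat-folds-cat} the underlying object type $MO$ agrees with $\ob{\C}$ and the arrow families agree with the hom-sets, so for $a,b:MO$ the type $(a=b)$ is literally the type $(a =_{\ob{\C}} b)$ appearing in the definition of a univalent category. Being $0$-univalent asserts that the canonical map $(a=b)\to(a\foldsiso b)$ is an equivalence, while its corresponding precategory being a univalent category asserts that $\idtoiso_{a,b}:(a=b)\to(a\cong b)$ is an equivalence. Both assertions are mere propositions, so for each $a,b$ it suffices to show they are logically equivalent.

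The key step is to show that the triangle
\[
\begin{tikzcd}[column sep=small]
& (a=b) \arrow[dl] \arrow[dr, "\idtoiso_{a,b}"] & \\
(a\foldsiso b) \arrow[rr, "\sim"] & & (a\cong b)
\end{tikzcd}
\]
commutes, where the left map is the canonical one and the bottom map is the equivalence of \Cref{thm:iso-foldsiso}. Both composites $(a=b)\to(a\cong b)$ are defined by path induction on their argument, so by the induction principle for identity types it is enough to compare them on $\refl_a$. The canonical map sends $\refl_a$ to the identity indiscernibility, all of whose components $\phi_{x\bullet}$, $\phi_{\bullet z}$, $\phi_{\bullet\bullet}$ are identity equivalences; tracing this through the construction of \Cref{thm:iso-foldsiso} yields the identity isomorphism $1_a:a\cong a$, which is exactly $\idtoiso_{a,b}(\refl_a)$. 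Hence the two maps agree on $\refl_a$, and therefore everywhere.

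With the triangle established, the conclusion is immediate from the $2$-out-of-$3$ property of equivalences: since the bottom map is an equivalence by \Cref{thm:iso-foldsiso}, the left map $(a=b)\to(a\foldsiso b)$ is an equivalence if and only if the composite $\idtoiso_{a,b}$ is. Quantifying over $a,b$ then gives that $M$ is $0$-univalent iff its corresponding precategory is a univalent category. The only real work is the verification that the equivalence of \Cref{thm:iso-foldsiso} carries the identity indiscernibility to the identity isomorphism; this is where one must unwind the explicit construction behind that theorem, but it is a routine computation once the naturality data of the identity indiscernibility are written out.
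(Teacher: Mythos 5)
Your proposal is correct and takes essentially the approach the paper intends: the paper leaves this theorem as an immediate consequence of \Cref{thm:iso-foldsiso} (hence the bare \qed), and your argument---identifying the underlying data via \Cref{lem:1-sat-folds-cat}, checking that the equivalence of \Cref{thm:iso-foldsiso} fits into a commuting triangle over $(a=b)$ by path induction on $\refl_a$, and concluding by 2-out-of-3---is exactly the standard filling-in of that gap. The one deferred step, that the equivalence of \Cref{thm:iso-foldsiso} carries the identity indiscernibility to $1_a$, is indeed routine since that equivalence is the Yoneda-style map $\phi \mapsto \phi_{a\bullet}(1_a)$.
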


The point is that the definition of indiscernibility can be derived algorithmically from the FOLDS-signature for categories, by an algorithm which applies equally well to any FOLDS-signature.
We will give this mechanism explicitly in \Cref{sec:FOLDS-iso-uni}. %, but for now we trust that this seems plausible.
Then, for any $a,b:MK$ in some structure $M$, there will be a canonical map $(a=_K b)\to (a\foldsiso b)$, and we call $M$ \textbf{univalent} if these are equivalences.

However, 
%Having leaped to this general conclusion, however, let us look back at the example of categories for a moment.
there are two mismatches between this example so far and the general theory we have just proposed.
Firstly, we have assumed \textit{ad hoc} that $MA$ consists of sets. % and $MT,MI,ME$ of propositions.
Secondly, we have just proposed that \emph{all} sorts should satisfy a univalence property, but in the example of categories we have only considered this for the sort $O$.
Fortunately, these two problems solve each other, and moreover remove the need to postulate ``standardness'' of equality. % in the bargain.
%Let us make the following definition by removing these two assumptions from \cref{defn:fc1}:

\begin{definition}
  A \textbf{$2$-univalent FOLDS-category} $M$ consists of the same type families $MO$, $MA$, $MI$, $MT$, $ME$ as a $1$-univalent FOLDS-category, 
  % \begin{itemize}
  % \item A type $MO:\U$;
  % \item A family $MA:MO\times MO \to \U$;
  % \item A family $MI : \prd{x:MO} MA(x,x) \to \U$;
  % \item A family $MT : \prd{x,y,z:MO} MA(x,y) \to MA(y,z) \to MA(x,z) \to \U$; and
  % \item A family $ME : \prd{x,y:MO} MA(x,y) \to MA(x,y) \to \U$,
  % \end{itemize}
  such that $ME$ is a congruence,
  % \begin{itemize}%[resume]
  % \item
    each type $MI_x(f)$, $MT_{x,y,z}(f,g,h)$, and $ME_{x,y}(f,g)$ is a proposition,
  % \end{itemize}
  and the axioms of a category are satisfied with $ME$ used in place of equality.
\end{definition}

An indiscernibility between $f,g:A(a,b)$ in a $2$-uni\-valent FOLDS-category should consist of logical equivalences between instances of $T$, $I$, and $E$ with $f$ replaced by $g$ in ``all possible ways'', clearly beginning with
%It is obvious that this should begin with
\begin{align}
  T_{x,a,b}(u,f,v) &\leftrightarrow T_{x,a,b}(u,g,v) \label{eq:fia1}\\
  T_{a,x,b}(u,v,f) &\leftrightarrow T_{a,x,b}(u,v,g) \label{eq:fia2}\\
  T_{a,b,x}(f,u,v) &\leftrightarrow T_{x,a,b}(g,u,v) \label{eq:fia3}
\end{align}
for all $x:O$ and $u,v$ of appropriate types.
But 
    % However, is perhaps not so obvious how to continue: 
how do we put $f$ in two or three of the places in $T$ in the most general way?
In \Cref{sec:FOLDS-iso-uni} we will see that the answer is to assume an equality between objects and transport $f$ along it. % so as to make the statement well-typed.
%Thus we complete the notion of FOLDS-isomorphism as follows.

\begin{definition}\label{defn:foldsiso-arrows}
  For $f,g:A(x,y)$ in a $2$-univalent FOLDS-category, an \defemph{indiscernibility} from $f$ to $g$ consists of the logical equivalences shown in~\crefrange{eq:fia1}{eq:fian}, for all $p:a=a$, $q:b=a$, and $r:b=b$.
\end{definition}

\begin{align}
  T_{a,a,b}(\trans{q}{f},f,u) &\leftrightarrow T_{a,a,b}(\trans{q}{g},g,u) \label{eq:fia4}\\
  T_{a,b,b}(\trans{p}{f},u,f) &\leftrightarrow T_{a,b,b}(\trans{p}{g},u,g) \label{eq:fia5}\\
  T_{a,a,b}(u,\trans{r}{f},f) &\leftrightarrow T_{a,a,b}(u,\trans{r}{g},g) \label{eq:fia6}\\
  T_{a,a,b} (\trans{(p,q)}f,\trans r f,f) &\leftrightarrow T_{a,a,b}(\trans{(p,q)}g,\trans r g,g) \label{eq:fia7}\\
  I_{a} (\trans q f) &\leftrightarrow I_a(\trans q g) \label{eq:fia8}\\
  E_{a,b}(f,u) &\leftrightarrow E_{a,b}(g,u) \label{eq:fia9}\\
  E_{a,b}(u,f) &\leftrightarrow E_{a,b}(u,g) \label{eq:fia10}\\
  E_{a,b} (\trans{(p,r)}f,f) &\leftrightarrow E_{a,b} (\trans{(p,r)}g,g)\label{eq:fian}
\end{align}

Since $T$, $I$, and $E$ are propositions, so is the type $f\foldsiso g$ of indiscernibilities. % from $f$ to $g$.
And $f\foldsiso f$, so by path induction we have $(f=g) \to (f\foldsiso g)$. %; the obvious ``univalence'' condition for the sort $A$ is that this map is an equivalence for all $f,g$.

\begin{theorem}\label{thm:2-univ-is-1-univ}
  A $2$-univalent FOLDS-category is $1$-univalent iff the map $(f=g) \to (f\foldsiso g)$ is an equivalence for all $f,g$.
\end{theorem}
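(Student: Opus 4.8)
The plan is to prove both implications of the biconditional by reducing each to a logical equivalence between propositions. First I would record that, in any $2$-univalent FOLDS-category, the type $f\foldsiso g$ is a proposition: each of the clauses \crefrange{eq:fia1}{eq:fian} is a logical equivalence between propositions (the families $MT$, $MI$, $ME$ are proposition-valued), hence itself a proposition, and $f\foldsiso g$ is a product of such, so a proposition. The canonical map $(f=g)\to(f\foldsiso g)$ is the one obtained by path induction from $f\foldsiso f$, as observed just before the statement. Since the codomain is a proposition in both halves of the argument, it suffices in each direction to supply a converse and check that the two types involved are propositions.

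For the forward implication (assume $M$ is $1$-univalent), each $MA(a,b)$ is a set and $ME$ is standard, $ME_{a,b}(f,g)\leftrightarrow(f=g)$. Then $(f=g)$ is a proposition, so the canonical map is a map of propositions and is an equivalence as soon as it admits a converse $(f\foldsiso g)\to(f=g)$. I would build this converse from the single clause \eqref{eq:fia9}: given an indiscernibility, instantiate \eqref{eq:fia9} at $u\eqdef f$ and feed in $ME_{a,b}(f,f)$, which holds by reflexivity of the congruence, to obtain $ME_{a,b}(g,f)$; standardness then yields $g=f$, hence $f=g$. This is precisely the ``identity of indiscernibles'' haecceity argument specialized to the standard predicate $E$.

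For the converse (assume the canonical map is an equivalence for all $f,g$), I would argue as follows. Since $f\foldsiso g$ is a proposition and $(f=g)\simeq(f\foldsiso g)$, every identity type $(f=g)$ of $MA(a,b)$ is a proposition, so $MA(a,b)$ is a set. It remains to show $ME$ is standard. Reflexivity of the congruence gives $(f=g)\to ME_{a,b}(f,g)$ by path induction. For the reverse implication I would construct a map $ME_{a,b}(f,g)\to(f\foldsiso g)$ and compose it with the inverse equivalence $(f\foldsiso g)\to(f=g)$. To build this map, from an inhabitant of $ME_{a,b}(f,g)$ I must exhibit all of \crefrange{eq:fia1}{eq:fian}: the untransported clauses \eqref{eq:fia1}--\eqref{eq:fia3}, \eqref{eq:fia9}, \eqref{eq:fia10} follow by the $T$- and $E$-congruence laws with $E_{a,b}(f,g)$ in the distinguished slot and reflexivity in the others (and symmetry of $E$ for the reverse directions), while the transported clauses \eqref{eq:fia4}--\eqref{eq:fia8} and \eqref{eq:fian} follow by first transporting $E_{a,b}(f,g)$ along the object paths $p,q,r$---using that $E$ is a dependent family over $MA$, so transport carries the $E$-relation to the transported arrows---and then applying congruence (together with symmetry and transitivity of $E$ for \eqref{eq:fian}). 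Since all the types are propositions, the two implications $ME_{a,b}(f,g)\to(f=g)$ and $(f=g)\to ME_{a,b}(f,g)$ establish standardness.

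The main obstacle is this last verification: producing $ME_{a,b}(f,g)\to(f\foldsiso g)$ requires checking every clause of \cref{defn:foldsiso-arrows}, and the transported clauses \eqref{eq:fia4}--\eqref{eq:fia8} and \eqref{eq:fian} demand care to confirm that the arrows appearing there are exactly the ones related by the transported instance of $E$, so that the congruence laws apply cleanly. Everything else is a direct, if tedious, application of those laws. Finally, once $ME$ is shown to be standard, the category axioms phrased with $ME$ coincide with the ordinary ones, so $M$ is $1$-univalent with no further checking.
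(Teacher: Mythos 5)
Your proposal is correct and follows essentially the same route as the paper's proof: both directions reduce to a logical equivalence between propositions, with $E_{a,b}(f,g)\Rightarrow(f\foldsiso g)$ established via the congruence laws for the ``if'' direction, and $(f\foldsiso g)\Rightarrow E_{a,b}(f,g)\Rightarrow(f=g)$ via reflexivity of $E$ and standardness for the ``only if'' direction. The paper's proof is merely terser---it leaves the clause-by-clause congruence verification (including the transported clauses) implicit, which you spell out.
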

\begin{proof}
  Since $f\foldsiso g$ is a proposition, the latter condition implies that each $A(a,b)$ is a set.
  Thus, for ``if'' it suffices to show $E_{a,b}(f,g)\Rightarrow (f\foldsiso g)$, which holds since $E$ is a congruence for $T$ and $I$.
  For ``only if'', we must show $(f\foldsiso g) \Rightarrow (f=g)$ in a FOLDS-category.
  But since $E_{a,b}(f,f)$ always, $f\foldsiso g$ implies $E_{a,b}(f,g)$, hence $f=g$ by standardness.
\end{proof}

Thus, by extending the ``univalence'' condition of a category from the sort $O$ to the sort $A$, we encompass automatically the assumption that the hom-types in a precategory are sets and that the equality is standard.

Finally, we can even stop treating the top sorts specially.

\begin{definition}
  A \textbf{FOLDS-category} consists of the same data and axioms as a $2$-univalent FOLDS-category, but without the assumption that the types $T$, $I$, and $E$ are propositions.
\end{definition}

Now, the type $t\foldsiso t'$ of indiscernibilities between $t,t':T_{x,y,z}(f,g,h)$ should consist of consistent equivalences between all types dependent on $t$ and $t'$.
But there are no such types in the signature, so $t\foldsiso t'$ is contractible.
The same reasoning applies to $I$ and $E$.
Thus, the univalence condition for these sorts will assert simply that all of their path-types are contractible, i.e., that they are propositions.

\begin{theorem}
  A FOLDS-category is $2$-univalent if and only if the canonical maps
  % \begin{align*}
    $(t= t') \to (t\foldsiso t')$,
    $(i=i') \to (i\foldsiso i')$, and
    $(e=e') \to (e\foldsiso e')$
  % \end{align*}
   are equivalences
  for all inhabitants of the types $T$, $I$, and $E$ respectively. \qed
\end{theorem}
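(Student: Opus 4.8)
The plan is to reduce the statement to a pair of elementary type-theoretic facts, using the key observation recorded just before the theorem: for each top sort there are no sorts depending on it, so the indiscernibility types $t \foldsiso t'$, $i \foldsiso i'$, and $e \foldsiso e'$ are all contractible. First I would note that, by construction, a FOLDS-category differs from a $2$-univalent one \emph{only} in dropping the hypothesis that $T$, $I$, and $E$ are propositions; the congruence condition on $ME$ and the category axioms (with $ME$ in place of equality) are already built into the definition of a FOLDS-category. Hence it suffices to show that the three canonical maps are equivalences if and only if $T$, $I$, and $E$ are (pointwise) propositions.

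The core step treats each top sort identically; take $T$ for concreteness. Fix indices $x,y,z$ and $f,g,h$, and consider $t,t' : T_{x,y,z}(f,g,h)$. The canonical map $(t=t') \to (t \foldsiso t')$ has contractible codomain, and a function into a contractible type is an equivalence precisely when its domain is contractible. Thus this map is an equivalence for all $t,t'$ iff $(t=t')$ is contractible for all $t,t'$, which by the standard characterization of $(-1)$-types (a type is a proposition exactly when all of its identity types are contractible) is the assertion that $T_{x,y,z}(f,g,h)$ is a proposition. Quantifying over all indices yields: the $T$-map is an equivalence iff $T$ is a proposition. The identical argument, using contractibility of $i \foldsiso i'$ and $e \foldsiso e'$, handles $I$ and $E$.

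Combining these three condition-equivalences with the definitional reduction of the first paragraph closes the argument: all three canonical maps are equivalences iff $T$, $I$, and $E$ are all propositions, iff the FOLDS-category is $2$-univalent. The only point requiring anything beyond formal manipulation — and hence the main obstacle — is the contractibility of the indiscernibility types for the top sorts. This is asserted in the discussion preceding the theorem and will be underwritten by the general definition of indiscernibility in \Cref{sec:FOLDS-iso-uni}: since an indiscernibility packages coherent equivalences between all families \emph{depending} on the given element, and nothing in the signature depends on $T$, $I$, or $E$, that package is empty and the type is contractible. I do not expect difficulty with the ``canonicity'' of the maps, since each has contractible codomain and is therefore determined up to homotopy regardless of its precise definition by path induction.
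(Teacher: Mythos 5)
Your proposal is correct and follows essentially the same route as the paper: the paper's own (implicit) proof is exactly the observation that no sorts depend on $T$, $I$, or $E$, so the indiscernibility types are contractible, whence the canonical maps are equivalences precisely when all identity types of $T$, $I$, $E$ are contractible, i.e.\ when these sorts are propositions---which by definition is what separates a $2$-univalent FOLDS-category from a bare FOLDS-category. Your closing remarks on canonicity and on the source of contractibility match the paper's reasoning as well.
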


Thus, the notion of univalent category is determined only by the signature $\LcatE$, plus axioms (which are irrelevant for indiscernibilities and univalence).
Our goal is to define notions of indiscernibility and univa\-lence for any signature $\L$, generalizing the theory of univalent categories to arbitrary higher-categorical structures.

\section{Background: Two-Level Type Theory}\label{sec:2ltt}

In the following sections, we work in a two-level type theory (2LTT) as in \cite{2LTT}, with axioms (M2) (Russell-style universes), (T1), (T2), and (T3) from \cite[Section~2.4]{2LTT}.
Building on \cite{KL12}, 2LTT is shown in \cite[\S2.5]{2LTT} to be modeled by simplicial sets.

% 2LTT has $\Pi$-types, $\Sigma$-types, $+$, a strict identity type $\steq$, $\onetype$, $\zerotype$, $\N$, and universes. We use the conventional typical ambiguity 
% \cite[Section~1.3]{HTT} and refer to any universe by $\Ustrict$.
% The strict identity type $\steq$ satisfies function extensionality and uniqueness of identity proofs.
% 
% Furthermore, 2LTT has a predicate on types of ``being fibrant'': a fibrant type has, in addition, a univalent identity type denoted by $=$. This univalent identity type satisfies function extensionality. 
% If a type $A:\Ustrict$ is fibrant, we also write $A : \U$ to indicte that. We thus have an inclusion $\U \hookrightarrow \Ustrict$ that we usually omit to mention.
% The type constructors $\Pi$ and $\Sigma$ are closed under fibrancy

% In this section, we give a quick overview of 2-level type theory as presented in \cite{2LTT}.

% We will work in a two-level type theory 2LTT as presented in \cite{2LTT}.

2LTT has an ``outer'' (a.k.a.\ ``strict'') level, a Martin-L\"{o}f type theory with intensional identity types and uniqueness of identity proofs (UIP), and an ``inner'' level, a homotopy type theory with univalent universes.
Both have their own---\textit{prima facie} distinct---type formers $\Pi$, $\Sigma$, $+$, $\onetype$, $\zerotype$, $\Nat$, intensional ``$=$'' with function extensionality, and universes.
By axioms (T1) and (T2), we can \defemph{identify inner types with particular outer ones} so that $\Pi$, $\Sigma$, and $\onetype$ are ``shared'' between the levels \cite[Lemma~2.11]{2LTT}, so we need not distinguish those notationally.
For other type constructors we annotate the outer variants with $^s$ (for ``strict''), e.g., in $\Nat^s$. (In \cite{2LTT}, the \emph{inner} type formers are annotated.)
We use the conventional typical ambiguity \cite[Section~1.3]{HTT} and hence refer to any universe by $\U$ (inner) resp.\ $\Ustrict$ (outer).
We use $\converts$ and $\define$ to denote judgmental equality, e.g., in definitions.

%A type being inner is a meta-theoretic statement, but we can internalize it by calling
We call a type $A$ \defemph{fibrant} when it is isomorphic to an inner type $A'$ (in the strict sense, modulo $\steq$). %, and \defemph{trivially fibrant} when the inner type $A'$ is contractible \cite[Definition~3.4]{2LTT}.
Fibrancy is structure rather than property, but following \cite{2LTT} we abuse language by talking about a type ``being fibrant'' for simplicity.
Axiom (T3) states that \defemph{every fibrant type is inner}.
Thus, fibrant types are closed under $\Pi$ and $\Sigma$ (\cite[Lemma~3.5]{2LTT}).

A fibrant $A$ has two identity types: for $a,b : A$ we have the strict identity type $a \steq b$ that satisfies UIP, and the homotopical identity type $a = b$ that is at the center of HoTT.
We refer to elements of $a=b$ as ``\textbf{identifications}'', and elements of $a\steq b$ as ``\textbf{strict equalities}''.
Note that $a = b$ only eliminates into fibrant types, while $a \steq b$ eliminates into any type, fibrant or not. Consequently, for any fibrant type $A$ and $a, b : A$, we have a map $(a \steq b) \to (a = b)$. We sometimes use this implicitly to ``coerce'' a strict equality to an identification.
Similarly, we will frequently prove statements by induction on the strict natural numbers $\Nat^s$; there, we do not need to pay attention to the return type.
We write $A : \U$ to indicate that $A$ is a fibrant, or inner, type.

We write % $A\cong B$ for the type of (strict) isomorphisms from $A$ to $B$ (i.e., modulo $\steq$), and
$A \simeq B$ for the type of equivalences between two (necessarily fibrant) types $A$ and $B$, in the usual sense of HoTT/UF.
The \emph{truncation level} of a fibrant type is defined as in \cite{HTT}, with contractible types the $(-2)$-types, and an $n$-type being a type whose homotopical identity types are $(n-1)$-types.
A \emph{proposition} is a $(-1)$-type, and a \emph{set} is a $0$-type.

An \textbf{$s$-category} $\C$ (see also \cite[Definition~3.1]{2LTT}) is given by the following data (``s'' for ``strict''):
\begin{enumerate}
\item A type $\ob{\C}$ of \emph{objects} (also often denoted $\C$);
\item For each $x,y \colon \C$ a type $\C(x,y)$ of \emph{arrows};
\item For each $x \colon \C$ an arrow $1 \colon \C(x,x)$; and
\item A \emph{composition} map $\circ \colon \C(y,z) \rightarrow \C(x,y) \rightarrow \C(x,z)$ that is strictly associative and for which $1$ is a strict left and right unit.
\end{enumerate}
A universe $\Ustrict$ gives rise to an s-category, also called $\Ustrict$, with objects $A : \Ustrict$ and morphisms $\Ustrict(A,B) \eqdef A \to B$.
An $s$-functor $F : \C \to \D$ consists of a function $\ob{F} : \ob{\C} \to \ob{\D}$ and functions $F_{x,y} : \C(x,y) \to \D(\ob{F}x,\ob{F}y)$ preserving identity and composition up to strict equality. We denote both $\ob{F}$ and $F_{x,y}$ by just $F$.
A strict natural transformation $\alpha : F \Rightarrow G : \C \to \D$ consists of a family of morphisms $(\alpha_x : \D(Fx, Gx))_{x : \ob{\C}}$ satisfying the naturality axiom strictly.

% In~\cite{2LTT} this notion is called a ``strict category'', and generally abbreviated to ``category''.
% However, we prefer to avoid the former, since it is quite different from the ``strict categories'' of~\cite{HTT}; and since various kinds of ``category'' (such as precategories and univalent categories) are also our object theories of interest, it would be confusing to use the unadorned ``category'' for the pretype version.
% Thus we instead abbreviate ``strict'' by $s$, which matches the other notations of two-level type theory such as $\Ustrict,\N^s,\dots$.
% We define \emph{functors, natural transformations} etc.\ of all kinds of categories as usual.

%Anticipating some of the later development: 
Our signatures will involve strict equality, and will thus live in the outer level of 2LTT. For a fixed signature, the types of structures, of maps between structures, and of indiscernibilities within a structure, will live entirely within the fibrant fragment of 2LTT.

\section{Signatures and Structures}\label{sec:signatures}

In traditional logic, a \emph{signature} specifies the sorts, functions, and relations of a structure.
A signature in dependent type theory must also specify the dependencies between sorts; Makkai \cite{MFOLDS} observed that this enables relations and, to a certain extent, functions, to be expressed merely in terms of sorts.%
\shortorlong{}
{%
\footnote{It is not unreasonable to directly include functions in addition to dependent sorts in a signature, obtaining something like Cartmell's \cite{Cart86} Generalized Algebraic Theories.  Indeed, a FOLDS-signature can be regarded as an especially simple sort of GAT; see also \cite[pp.~1--6]{MFOLDS}.  It is an interesting question for future work whether the results of this paper can be extended to more general GATs; for now we restrict ourselves to the simple case.
The relationship of our abstract signatures to GATs is unclear to us.}
}
Thus we could adapt Makkai's FOLDS-signatures to 2LTT and define structures as s-functors to $\U$ that are ``Reedy fibrant.'' %, i.e., the map from each sort to its ``collected dependencies'' is a fibration (has fibrant fibers).
However, it will be more convenient to formulate the notions of signature and structure \emph{inductively}, leading to a more general class of signatures.

Consider the FOLDS-signature $\Lrg$, for which a na{\"\i}ve structure consists of (fibrant) types and families $MO:\U$, $MA : MO \to MO \to \U$, and $MI : \prd{x:MO} MA(x,x) \to \U$.
If we strip off the \emph{top} sort $I$, the resulting structure contains only $MO$ and $MA$, and an inductive definition can be formulated along these lines.
But our inductions will be ``bottom-up'', so we want to strip off the \emph{bottom} sort $O$.
Once $MO:\U$ is fixed, the rest of an $\Lrg$-structure is determined by an ordinary structure over a \emph{derived} signature $\derivcat{(\Lrg)}{MO}$, with a rank-0 sort $A(x,y)$ for each $x,y:MO$, and a rank-1 sort $I(x)$ for each $x:MO$, with morphisms $I(x) \to A(x,x)$.
%(Precisely, we have $\derivcat{(\Lrg)}{MO}(0) = MO\times MO$ and $\derivcat{(\Lrg)}{MO}(1) = MO$.)
That is, we take the ``indexing'' of all sorts by $O$ and move it ``outside'' the signature, incorporating it into the types of sorts.\footnote{This would be impossible if our inverse categories were metatheoretic in the ordinary sense, e.g., syntactic and externally finite.
  2LTT is just right.}

This notion of \emph{derived FOLDS-signature} determines the notion of structure: a structure for $\L$ of height $p>0$ consists, inductively, of a family $\bottom{M}: \L(0) \to \U$ and a structure for $\derivcat{\L}{\bottom{M}}$ (which is of height $p-1$).
We can therefore abstract away from the inverse category underlying a FOLDS-signature, remembering only that each signature $\L$ of height $p>0$ has (1) a type $\L(0)$, and (2) for any $\bottom{M}: \L(0) \to \U$, a signature $\derivcat{\L}{\bottom{M}}$ of height $p-1$.

%This leads to our \emph{abstract signatures}.

\begin{definition}[\defemph{Abstract signature}]
\label{def:abstract_signature}

We define a family of s-categories $\Sig(n)$ of signatures of height $n$ by induction.
Let $\Sig(0)$ be the trivial s-category on $\onetype$.

An object $\L$ of $\Sig(n+1)$ consists of
\begin{enumerate}
\item a fibrant type $\bottom{\L} : \U$;
\item a functor $\derivcat{\L}{} : (\bottom{\L} \to \U) \to \Sig(n)$, where $\bottom{\L} \to \U$ is the functor s-category from the discrete s-category $\bottom{\L}$ to the canonical s-category $\U$.
%whose objects are families of types indexed by $\bottom{\L}$ and whose morphisms are natural transformations.
% \item $F: \prd{M,N: Z \to \U}$ \\ \phantom{x}\hfill $(\prd{z : Z} Mz \to Nz)\to \hom_{\Sig(n)}(D(M), D(N))$
% \item $I: \prd{M:Z \to \U} F(M, M, \lambda z. 1_{Mz}) \steq 1_{D(M)}$
% \item $C: \prd{M,N,P: Z \to \U} \prd{f: \prd{z : Z} Mz \to Nz)} \prd {g: \prd{z : Z} Nz \to Pz }$ \\ \phantom{x}\hfill $F(N, P, g) \circ F(M, N, f) \steq F(M,P, g \circ f) $
\end{enumerate}

% Given an $(n+1)$-signature $\L \define (Z,D,F,I,C)$, we write $\bottom{\L}$ for $Z$. 
% We call $D(M)$ the \emph{derivative of $\L$ at $M$} and denote it by $\derivcat{\L}{M}$. 
% We also denote $F(M,N,f)$ by $\derivcat{\L}{f}$. 
\noindent
Arguments of $\derivcat{\L}{}$ will be written as subscripts, as in $\derivcat{\L}{M}$.

For $\L, \M: \Sig(n+1)$, an element $\alpha$ of $\hom_{\Sig(n+1)}(\L, \M)$ consists of the following:
 \begin{enumerate}
 \item a function $\bottom{\alpha} : \bottom{\L} \to \bottom{\M}$
%   \item $\delta: \prd{M:\bottom{\M} \to \U} \hom_{\Sig(n)}(\derivcat{\L}{M \circ \zeta},\derivcat{\M}{M} )$
%   \item $\phi:  \prd{M,N: \bottom{\M} \to \U} \prd{\alpha: \prd{z: \bottom{\M}}M z \to N z} \derivcat{\M}{\alpha} \circ \delta(M) \steq \delta(N) \circ \derivcat{\L}{\alpha \circ \zeta} $.
 \item a strict natural transformation $\derivcat{\alpha}{}$ as in the diagram
   \[ 
    \begin{xy}
     \xymatrix@R=.5em@C=10em{
                    \bottom{\M} \to \U \ar[rd]^{\derivcat{\M}{}}
                                            \ar[dd]_{\_ \circ \bottom{\alpha}}
                    \\
                        & \Sig(n) \ltwocell<\omit>{\derivcat{\alpha}{}}
                    \\
                    \bottom{\L} \to \U \ar[ru]_{\derivcat{\L}{}}
     }
    \end{xy}
   \]

 \end{enumerate}
  Arguments of $\alpha'$ will also be written as subscripts, as in $\derivcat{\alpha}{M}$.
 
 Composition and identities are given by function composition and identity at $\bottomlevel$, and inductively for the derivative.
 Similarly, the categorical laws are easily proved by induction. 
\end{definition}

Similarly, we define \emph{$\L$-structures} inductively for $n : \Nat^s$ and each $L: \Sig(n)$.
The rank-0 part of an $\L$-structure is a type family $\bottom{M}:\bottom{\L}\to \U$, while the rest of an $\L$-structure consists of a structure for the derived signature $\derivcat{\L}{\bottom{M}}$.

\begin{definition}[\defemph{$\L$-structure}]

If $\L: \Sig(0)$, we define the type of \emph{$\L$-structures} to be $\Struc{\L} \define \onetype$.

If $\L:\Sig(n+1)$, we define the type of \emph{$\L$-structures} to be \[\Struc{\L} \define  \sm{\bottom{M} : {\bottom{\L}} \to \U } \Struc{\derivcat{\L}{\bottom{M}}}.\]

\end{definition}

For $\L: \Sig(n+1)$ we write $M \define (\bottom{M},M'): \Struc{\L}$. %, we also write $\bottom{M}$ for  $M_1$ and $\derivdia{M} $ for $ M_2$.

\begin{lemma}
 For any signature $\L$, the type $\Struc{\L}$ of $\L$-structures is fibrant. \qed
\end{lemma}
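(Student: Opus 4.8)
The plan is to prove the statement by strict induction on the height $n : \Nat^s$ of the signature, establishing uniformly that $\Struc{\L}$ is fibrant for every $\L : \Sig(n)$. This is legitimate even though fibrancy is structure rather than a mere property: by the discussion in \cref{sec:2ltt}, induction on $\Nat^s$ eliminates into arbitrary types, fibrant or not, so we may freely construct fibrancy data by recursion on $n$. The two substantive closure facts we will invoke are that fibrant types are closed under $\Pi$ and under $\Sigma$ \cite[Lemma~3.5]{2LTT}, the latter in the form: if $A$ is fibrant and $B : A \to \Ustrict$ has $B(a)$ fibrant for every $a : A$, then $\sm{a:A}B(a)$ is fibrant.

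For the base case $\L : \Sig(0)$, we have $\Struc{\L} \converts \onetype$ by definition, and $\onetype$ is one of the type formers shared between the two levels, hence fibrant. For the inductive step, fix $\L : \Sig(n+1)$ and assume the induction hypothesis that $\Struc{\M}$ is fibrant for every $\M : \Sig(n)$. By definition $\Struc{\L} \converts \sm{\bottom{M} : {\bottom{\L}} \to \U}\Struc{\derivcat{\L}{\bottom{M}}}$, so it suffices to exhibit this $\Sigma$-type as fibrant. First I would check that the base type $\bottom{\L}\to\U$ is fibrant: the type $\bottom{\L}$ is fibrant by clause~(1) of \cref{def:abstract_signature}, the inner universe $\U$ is itself an inner (hence fibrant) type, and fibrant types are closed under $\Pi$. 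Next, for each $\bottom{M} : \bottom{\L}\to\U$, the object $\derivcat{\L}{\bottom{M}}$ is the value of the functor $\derivcat{\L}{}$ at $\bottom{M}$ and therefore lies in $\Sig(n)$, so the induction hypothesis makes the fiber $\Struc{\derivcat{\L}{\bottom{M}}}$ fibrant. Finally, closure of fibrant types under $\Sigma$ over a fibrant base with pointwise-fibrant fibers yields that $\Struc{\L}$ is fibrant, completing the induction.

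I do not anticipate a serious obstacle, since the argument is essentially bookkeeping once the right closure principles are assembled. The one point that genuinely requires care is the inductive step's use of $\Sigma$-closure: one must confirm that the induction hypothesis supplies fibrancy of the \emph{entire} family $\bottom{M}\mapsto\Struc{\derivcat{\L}{\bottom{M}}}$ pointwise over the fibrant base $\bottom{\L}\to\U$, and not merely fibrancy at some fixed $\bottom{M}$. Relatedly, the whole step rests on the inner universe $\U$ being available as a fibrant type, so that $\bottom{\L}\to\U$ can serve as a fibrant base; this is precisely where the two-level discipline---inner types being exactly the fibrant ones, via axiom~(T3)---is doing the work.
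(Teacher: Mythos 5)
Your proof is correct and is precisely the intended argument: the paper omits the proof (marking the lemma with \qed as immediate), and the expected justification is exactly your strict induction on the height $n:\Nat^s$, using that $\onetype$ is fibrant, that $\bottom{\L}$ and $\U$ are fibrant, and that fibrant types are closed under $\Pi$ and $\Sigma$ per \cite[Lemma~3.5]{2LTT}. Your two points of care---that induction on $\Nat^s$ may eliminate into non-fibrant types (so fibrancy-as-structure can be built recursively), and that the inductive hypothesis must be quantified over all of $\Sig(n)$ so it applies pointwise to the family $\bottom{M}\mapsto\Struc{\derivcat{\L}{\bottom{M}}}$---are exactly the right things to check.
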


%This concludes the translation of FOLDS- to abstract signatures. 

In the rest of the paper we work exclusively with abstract signatures, calling them simply ``signatures''.
However, since most intended examples arise naturally as FOLDS-signatures, we need to be able to translate FOLDS-signatures to abstract ones.
To this end, we define general FOLDS-signatures in 2LTT and translate them to abstract signatures, in such a way that ``Reedy fibrant'' diagrams on a FOLDS-signature coincide with structures for the corresponding abstract signature.
In our FOLDS-signatures, the equations of \Cref{fig:signatures} are formulated modulo \emph{strict equality}.
\shortorlong{This rather technical part of our work is explained in the extended version \cite{hsip_arxiv} of this article.}{We postpone the description of this rather technical work to \Cref{sec:folds-sigs}.}
Here we sketch the results of the translation for the examples in \Cref{fig:signatures}.

\begin{examples}\label{egs:folds-sigs}
  All three examples have only one sort of rank 0, so that $\bottom{\L} = \{O\} = \onetype$, and $\bottom{M}$ consists of a single type $MO$.
  Moreover, since all three examples have only one sort $A$ of rank 1 that depends on $O$ twice, their derivatives $\derivcat{\L}{MO}$ have $\bottom{(\derivcat{\L}{MO})} = MO\times MO$, and $\bottom{(\derivdia{M})}$ is a single type family $MA : MO \times MO \to \U$.
  Finally, since all three have height 3, $\derivcat{(\derivcat{\L}{MO})}{MA}$ has height 1, hence is just a single type.
  \begin{itemize}
  \item For $\Lrg$, this type is $\tsm{x:MO} MA(x,x)$, so that a structure is completed by a type family
    \[MI : \big(\tsm{x:MO} MA(x,x)\big) \to \U. \]
  \item For $\Lcat$, this type is
    \begin{narrowmultline*}
      \big(\tsm{x,y,z:MO} MA(x,y) \times MA(y,z)\times MA(x,z)\big) \narrowbreak + \big(\tsm{x:MO} MA(x,x)\big),
    \end{narrowmultline*}
    so that a structure is completed by a type family $MI$ as above together with
    \[ MT : \big(\tsm{x,y,z:MO} MA(x,y) \times MA(y,z)\times MA(x,z)\big) \to \U. \]
  \item Finally, for $\LcatE$, this type is
    \begin{narrowmultline*}
      \big(\tsm{x,y,z:MO} MA(x,y) \times MA(y,z)\times MA(x,z)\big)\narrowbreak
      + \big(\tsm{x:MO} MA(x,x)\big) + \big(\tsm{x,y:MO} MA(x,y) \times MA(x,y)\big),
    \end{narrowmultline*}
    so that a structure is completed by type families $MI$ and $MT$ as above together with
    \[ ME : \big(\tsm{x,y:MO} MA(x,y) \times MA(x,y)\big) \to \U. \]
  \end{itemize}
\end{examples}

\section{(Iso)morphisms of Structures}\label{sec:structures}

The definition of structures for signatures doesn't require the fact that signatures form an s-category.
But defining \emph{morphisms} of structures will require the pullback of an $\M$-structure along a morphism $\alpha : \L \to \M$ of signatures, defined as follows.

\begin{definition}
For any $\alpha: \hom_{\Sig(n)}(\L, \M)$, we define the \textbf{pullback} $\alpha^*: \Struc{\M} \to \Struc{\L}$ inductively as follows. 

If $n \eqdef 0$, then let $\alpha^*: \Struc{\M} \to \Struc{\L}$ be the identity. 

If $n > 0$, consider $M: \Struc{\M}$. We let $\bottom{ (\alpha^* M)}$ be $\bottom{M} \circ \bottom{\alpha}$. By induction, the morphism 
\[\derivcat{\alpha}{\bottom{M}}:  \hom_{\Sig(n-1)}(\derivcat{\L}{\bottom{M} \circ \bottom{\alpha}} , \derivcat{\M}{\bottom{M}})\]
produces a $(\derivcat{\alpha}{\bottom{M}})^*: \Struc{\derivcat{\M}{\bottom{M}}} \to \Struc{\derivcat{\L}{\bottom{M} \circ \bottom{\alpha}}}$, so we set $\derivdia{(\alpha^* M)} \define (\derivcat{\alpha}{\bottom{M}})^* \derivdia{M}$.
\end{definition}
Pullback is functorial: pullback along a composition of signature morphisms is the composition of pullbacks, and pullback along an identity morphism is the identity.

We now inductively define \emph{morphisms} between structures of a given signature, making $\Struc{\L}$ into an s-category.
\begin{definition}[\defemph{Morphism of structures}]
Consider $\L: \Sig(n)$ and $M,N : \Struc{\L}$.

When $n \eqdef 0$, we let $\hom_{\Struc{\L}}(M, N)\define \onetype$.

When $n > 0$, a morphism $f : \hom_{\Struc{\L}}(M,N)$ consists of
 \begin{enumerate}
 \item $\bottom{f}: \prd{K:\bottom{\L}} \bottom{M}(K) \to \bottom{N}(K)$
 \item $\derivdia{f}: \hom_{\Struc{\derivcat{\L}{\bottom{M}}}}(M',( \derivcat{\L}{\bottom{f}})^*\derivdia{N})$.
 \end{enumerate}
 \end{definition}
% For a morphism $f \define (\mu, \nu): \hom_{\Struc{\L}}(M,N)$ as above, we let $\bottom{f}$ denote $\mu$ and $\derivdia{f}$ denote $\nu$.

\begin{lemma}
 For a signature $\L$ and $\L$-structures $M$ and $N$, the type of morphisms from $M$ to $N$ is fibrant. \qed
\end{lemma}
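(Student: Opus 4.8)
The plan is to proceed by induction on the height $n : \Nat^s$ of the signature $\L$, mirroring the inductive definition of the hom-type and repeatedly appealing to the closure of fibrant types under $\Pi$ and $\Sigma$ from \cite[Lemma~3.5]{2LTT}. The key observation is that for $n>0$ the type $\hom_{\Struc{\L}}(M,N)$ is by definition a $\Sigma$-type: its first component ranges over $\bottom{f} : \prd{K:\bottom{\L}} \bottom{M}(K) \to \bottom{N}(K)$, and its second component $\derivdia{f}$ is a hom-type over the derived signature $\derivcat{\L}{\bottom{M}}$, which has height $n-1$. So the claim reduces to showing that this base type is fibrant and that each fiber is fibrant, whence $\Sigma$-closure finishes the argument.

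For the base case $n \eqdef 0$, the hom-type is by definition $\onetype$, which is a shared type and hence fibrant. For the inductive step, assume the claim for all signatures of height $n-1$. First I would verify that the base type $\prd{K:\bottom{\L}} \bottom{M}(K) \to \bottom{N}(K)$ is fibrant: the index type $\bottom{\L}$ is fibrant by clause~(1) of \cref{def:abstract_signature}, and for each $K$ both $\bottom{M}(K)$ and $\bottom{N}(K)$ lie in $\U$, so $\bottom{M}(K) \to \bottom{N}(K)$ is fibrant; closure under $\Pi$ then gives fibrancy of the whole product.

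Next I would verify that the fibers are fibrant. Given any $\bottom{f}$, it is a morphism $\bottom{M} \to \bottom{N}$ in the functor s-category $\bottom{\L} \to \U$, so applying the functor $\derivcat{\L}{}$ yields $\derivcat{\L}{\bottom{f}} : \hom_{\Sig(n-1)}(\derivcat{\L}{\bottom{M}}, \derivcat{\L}{\bottom{N}})$, and its pullback sends $\derivdia{N} : \Struc{\derivcat{\L}{\bottom{N}}}$ to $(\derivcat{\L}{\bottom{f}})^*\derivdia{N} : \Struc{\derivcat{\L}{\bottom{M}}}$. Since $M' : \Struc{\derivcat{\L}{\bottom{M}}}$ as well, the fiber $\hom_{\Struc{\derivcat{\L}{\bottom{M}}}}(M', (\derivcat{\L}{\bottom{f}})^*\derivdia{N})$ is a hom-type for the height-$(n-1)$ signature $\derivcat{\L}{\bottom{M}}$, and so is fibrant by the inductive hypothesis. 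Closure under $\Sigma$ then shows that $\hom_{\Struc{\L}}(M,N)$ is fibrant.

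There is little genuine difficulty here; the argument is a routine structural induction once the hom-type is recognized as iterated $\Pi$ and $\Sigma$ over fibrant data. The one point that requires care, and the closest thing to an obstacle, is that the second component of the $\Sigma$ genuinely depends on $\bottom{f}$ through the pullback $(\derivcat{\L}{\bottom{f}})^*\derivdia{N}$, so I must apply the inductive hypothesis pointwise in $\bottom{f}$ before invoking $\Sigma$-closure, and I must use that pullback preserves structures so that the fiber is a well-formed hom-type to which the inductive hypothesis actually applies.
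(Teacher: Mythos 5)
Your proof is correct and is precisely the routine argument the paper leaves implicit (the lemma is stated with an omitted proof): structural induction on the height of $\L$, using that the hom-type is $\onetype$ at height $0$ and otherwise a $\Sigma$-type over a fibrant $\Pi$-type with fibers given by hom-types over the height-$(n-1)$ signature $\derivcat{\L}{\bottom{M}}$, together with the closure of fibrant types under $\Pi$ and $\Sigma$ from \cite[Lemma~3.5]{2LTT} that the paper's background section sets up for exactly this purpose. Your attention to the dependence of the fiber on $\bottom{f}$ via $(\derivcat{\L}{\bottom{f}})^*\derivdia{N}$, and to the fact that the inductive hypothesis must be quantified over all height-$(n-1)$ signatures and structures, is exactly right.
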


As a stepping-stone to our SIP for univalent $\L$-structures, we show that \emph{all} $\L$-structures satisfy a tautological ``levelwise'' form of univalence.

\begin{definition}[\defemph{Isomorphism of structures}]
\label{def:iso-of-structures}
 Consider $\L: \Sig(n)$ and $M,N : \Struc{\L}$. 

If $n\eqdef 0$, we define every $f: \hom_{\Struc{\L}}(M,N)$ to be an $\L$-isomorphism. That is, we define
$\isIso_{\L}(f) \define \onetype$.

For $n > 0$,  $f: \hom_{\Struc{\L}}(M,N)$ is an \emph{$\L$-isomorphism} when
 \begin{enumerate}
 \item $\bottom{f}(K)$ is an equivalence of types for all $K: \bottom{\L}$, and
 \item $\derivdia{f}$ is an $\derivcat{\L}{\bottom{M}}$-isomorphism.
 \end{enumerate}
 That is, let 
 \[ \isIso_\L(f) \define \left(\prd{(K: \bottom{L})} \isEquiv(\bottom{f}(K))\right) \times \isIso_{\derivcat{\L}{\bottom{M}}} (\derivdia{f}). \]

We denote the type of $\L$-isomorphisms between two $\L$-structures $M,N$ by $M \cong_{\L}  N$, or simply $M \cong N$.
\end{definition}
\begin{lemma}\label{lem:isisoprop}
For any morphism $f : M \to N$ between two $\L$-structures, the type $\isIso_\L(f)$ is fibrant and a proposition. \qed
\end{lemma}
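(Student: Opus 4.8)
The plan is to induct on the height $n$ of the signature $\L$, following the inductive definition of $\isIso_\L$ in \Cref{def:iso-of-structures}.

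In the base case ($n = 0$) we have $\isIso_\L(f) \converts \onetype$. The unit type is an inner, hence fibrant, type, and it is contractible and so in particular a proposition.

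For the inductive step ($n > 0$), \Cref{def:iso-of-structures} presents $\isIso_\L(f)$ as the product of the two factors $\prd{K : \bottom{\L}}\isEquiv(\bottom{f}(K))$ and $\isIso_{\derivcat{\L}{\bottom{M}}}(\derivdia{f})$. It therefore suffices to show that each factor is fibrant and a proposition, since both fibrancy (closure under $\Sigma$, \cite[Lemma~3.5]{2LTT}) and propositionality are preserved by binary products. The second factor is fibrant and a proposition directly by the induction hypothesis, as $\derivcat{\L}{\bottom{M}}$ has height $n - 1$. For the first factor, observe that for each $K : \bottom{\L}$ the component $\bottom{f}(K)$ is a map between the fibrant types $\bottom{M}(K)$ and $\bottom{N}(K)$, so $\isEquiv(\bottom{f}(K))$ is fibrant and a proposition by the usual theory of equivalences in HoTT/UF. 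Since $\bottom{\L} : \U$ is fibrant by the definition of a signature (\Cref{def:abstract_signature}), closure of fibrant types under $\Pi$ (\cite[Lemma~3.5]{2LTT}) shows the dependent product is fibrant, while a dependent product of propositions is again a proposition by function extensionality.

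I expect no real obstacle here: the argument is a routine induction matching the recursive structure of the definition. The only point that requires attention is that the index type $\bottom{\L}$ of the dependent product in the first factor is fibrant, which is precisely what licenses the appeal to closure under $\Pi$; this fibrancy is built into the definition of an abstract signature, so the step goes through.
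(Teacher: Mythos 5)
Your proof is correct and is essentially the argument the paper intends: the paper omits the proof as routine (the lemma is stated with an immediate \(\qedsymbol\)), and the obvious argument is exactly your induction on the height of \(\L\), using closure of fibrant types under \(\Pi\) and \(\Sigma\), the standard fact that \(\isEquiv\) is a fibrant proposition, and the inductive hypothesis for \(\derivcat{\L}{\bottom{M}}\). Your attention to the fibrancy of \(\bottom{\L}\) as the hypothesis licensing closure under \(\Pi\) is exactly the right point to flag.
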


\begin{definition}[\defemph{Identity isomorphism}]
When $n \eqdef 0$, we define $i_M$ to be the canonical element in $\onetype$.

Otherwise, we have $ 1_{\derivcat{\L}{\bottom{M}}} \steq \derivcat{\L}{1_{\bottom{M}}} $ (from functoriality of $\derivcat{\L}{}$), hence $u:  (1_{\derivcat{\L}{\bottom{M}}})^*M' \steq (\derivcat{\L}{1_{\bottom{M}}} )^*M' $.
We also have a strict equality $v:  M' \steq (1_{\derivcat{\L}{\bottom{M}}})^*M' $ (by the functoriality of pullback).
Then we set $i_{M}$ to be the pair $(1_{\bottom{M}},\idtoiso(v \concat u))$.
\end{definition}

Now define $\idtoiso: \prd{M,N: {\Struc{\L}}} (M = N) \to (M \cong N)$ by sending $\refl_M $ to $ i_M$.

\begin{proposition}\label{prop:uni1}
 For structures $M,N$ of a signature $\L$, the canonical map 
 \[ \idtoiso_{M,N}: (M = N) \to (M \cong N) \]
 is an equivalence of types.
\end{proposition}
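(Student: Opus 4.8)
The plan is to induct on the height $n : \Nat^s$ of the signature $\L$, following the inductive definitions of $\Struc{\L}$, of morphisms, and of $\isIso$. For the base case $\L : \Sig(0)$ we have $\Struc{\L} \steq \onetype$, so $(M = N)$ is contractible, and $\isIso_\L(f) \define \onetype$ makes $(M \cong N)$ contractible as well; hence $\idtoiso_{M,N}$ is automatically an equivalence.

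For the inductive step with $\L : \Sig(n+1)$, write $M \define (\bottom{M}, M')$ and $N \define (\bottom{N}, N')$, and decompose both sides as $\Sigma$-types. The characterization of identity types in a $\Sigma$-type gives
\[ (M = N) \simeq \sm{p : \bottom{M} = \bottom{N}} \left( \trans{p}{M'} = N' \right), \]
while unfolding the definitions of morphism and of $\isIso$ and absorbing the pointwise invertibility witness into $\bottom{f}$ yields
\[ (M \cong N) \simeq \sm{\bottom{f} : \prd{K:\bottom{\L}} (\bottom{M}(K) \simeq \bottom{N}(K))} \left( M' \cong (\derivcat{\L}{\bottom{f}})^* N' \right), \]
where $\derivcat{\L}{\bottom{f}}$ depends only on the underlying functions. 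On the base, univalence together with function extensionality supplies $(\bottom{M} = \bottom{N}) \simeq \prd{K} (\bottom{M}(K) \simeq \bottom{N}(K))$, sending $p$ to the family $K \mapsto \idtoeqv(\mathsf{happly}(p,K))$; this is exactly the base component built into $\idtoiso$.

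It then remains to match the fibers: for $p$ and $\bottom{f}$ corresponding under the base equivalence, I must produce
\[ \left( \trans{p}{M'} = N' \right) \simeq \left( M' \cong (\derivcat{\L}{\bottom{f}})^* N' \right). \]
The inductive hypothesis, applied to the derived signature $\derivcat{\L}{\bottom{M}}$ (of height $n$), already supplies $(M' = (\derivcat{\L}{\bottom{f}})^* N') \simeq (M' \cong (\derivcat{\L}{\bottom{f}})^* N')$, so the task reduces to the purely type-theoretic comparison $(\trans{p}{M'} = N') \simeq (M' = (\derivcat{\L}{\bottom{f}})^* N')$, i.e. to showing that transporting $M'$ along $p$ in the fibration $\Struc{\derivcat{\L}{-}}$ agrees with pulling $N'$ back along the equivalence determined by $p$. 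I would prove this by path induction on $p$: when $p \steq \refl$, the family $\bottom{f}$ is pointwise the identity, $\trans{\refl}{M'} \steq M'$, and $(\derivcat{\L}{\mathsf{id}})^* N' \steq N'$ by functoriality of $\derivcat{\L}{}$ and of pullback along identities — precisely the strict equalities $u$ and $v$ packaged into $i_M$ — so both sides collapse to $(M' = N')$.

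Assembling these fiberwise equivalences over the base equivalence produces an equivalence $(M = N) \simeq (M \cong N)$, and it remains only to check that this composite is homotopic to $\idtoiso_{M,N}$, for which it suffices to evaluate both at $\refl_M$, where the definition $i_M \define (1_{\bottom{M}}, \idtoiso(v \concat u))$ records exactly the coherence used above. The main obstacle is this final coherence: ensuring that the transport-versus-pullback identification descends the tower of derived signatures in a way strictly compatible with the concrete definition of $i_M$. A cleaner alternative that avoids constructing the inverse explicitly is to use the standard fact that a fiberwise map is an equivalence iff it induces an equivalence on total spaces: since $\idtoiso$ sends $\refl_M$ to $i_M$ and $\sm{N:\Struc{\L}}(M = N)$ is contractible, it suffices to prove that $\sm{N:\Struc{\L}} (M \cong N)$ is contractible. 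That in turn follows by the same $\Sigma$-decomposition, using that $\sm{\bottom{N}} \prd{K}(\bottom{M}(K) \simeq \bottom{N}(K))$ is contractible (by univalence) to reduce, via $(\derivcat{\L}{\mathsf{id}})^* N' \steq N'$, to the contractibility of $\sm{N'} (M' \cong N')$ furnished by the inductive hypothesis.
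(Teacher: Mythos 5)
Your main line of argument is essentially the paper's own proof: the paper likewise inducts on the height $n$, splits $(M=N)$ and $(M\cong N)$ as $\Sigma$-types over the bottom level, matches the bases by univalence (plus function extensionality), and matches the fibers by the transport-versus-pullback comparison followed by the inductive hypothesis; the paper's argument via \Cref{fig:diagram} --- both maps out of $(\bottom{M}=\bottom{N})$ send $\refl_{\bottom{M}}$ to the identity, by strict functoriality of $\derivcat{\L}{}$ and of pullback --- is exactly your path-induction step. The only genuine divergence is the final coherence: the paper simply asserts that the assembled chain of equivalences, read left to right, \emph{is} $\idtoiso_{M,N}$, whereas you flag this as the main obstacle and propose instead the total-space argument, showing that $\sm{N:\Struc{\L}}(M\cong N)$ is contractible and invoking the fiberwise-equivalence criterion. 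That variant is sound: the base $\sm{\bottom{N}:\bottom{\L}\to\U}\prd{K:\bottom{\L}}(\bottom{M}(K)\simeq\bottom{N}(K))$ is contractible by univalence, and at its center the strict equalities $\derivcat{\L}{1_{\bottom{M}}}\steq 1_{\derivcat{\L}{\bottom{M}}}$ and $(1_{\derivcat{\L}{\bottom{M}}})^*N'\steq N'$ reduce the total space to $\sm{N'}(M'\cong N')$, which is contractible by the inductive hypothesis. What it buys you is precisely what the paper leaves implicit: there is no need to verify that an explicitly constructed equivalence agrees with the path-induction-defined $\idtoiso$, since a fiberwise map between families whose total spaces are both contractible is automatically a fiberwise equivalence.
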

\begin{proof}%[Proof of \Cref{prop:uni1}]
  When $n \eqdef 0$, $ \idtoiso:\onetype \to \onetype$, hence is an equivalence.

Let $\ua : (\bottom{M} \cong \bottom{N}) \to (\bottom{M}  = \bottom{N} )$ be given by the univalence axiom. 
First we show that $\trans{\ua(e)^{-1}}{\derivdia{N}} = (\derivcat{\L}{e})^*\derivdia{N}$ for any $e: \bottom{M} \cong \bottom{N}$, where $\transfun{\ua(e)^{-1}}$ denotes transport along $\ua(e)^{-1}$.
\begin{figure*}
\(
\xymatrix{
(\bottom{M} \cong \bottom{N}) \ar[r]^{\ua }
& (\bottom{M} = \bottom{N}) \ar[d]^{(-)^{-1}} \ar[r]^-{\idtoiso} & (\bottom{M} \cong \bottom{N}) \ar[r]^-{\derivcat{\L}{-}}&\hom_{\Sig(n)}(\derivcat{\L}{\bottom{M}} , \derivcat{\L}{\bottom{N}}) \ar[d]^{(-)^*}\\
& (\bottom{N} = \bottom{M}) \ar[rr]^-{(-)_*} & & \Struc{\derivcat{\L}{\bottom{N}}} \to \Struc{\derivcat{\L}{\bottom{M}}}}  
\)
\caption{Diagram for Proof of \Cref{prop:uni1}}
\label{fig:diagram}
\Description{A commutative diagram}
\end{figure*}
Now the square in \Cref{fig:diagram} commutes (up to $=$) since both functions $(\bottom{M} = \bottom{N}) \to \Struc{\derivcat{\L}{\bottom{N}}} \to \Struc{\derivcat{\L}{\bottom{M}}} $ send $\refl_{\bottom{M}} $ to $ 1_{\Struc{\derivcat{\L}{M}}}$ (by strict functoriality of the pullback). Precomposing these with $\ua$, we find that $(\derivcat{\L}{e})^*\derivdia{N} = \trans{\ua(e)^{-1}}{\derivdia{N}}$.
Now we have that
 \begin{align*}
 (M = N) & = \sm{p:\bottom{M} = \bottom{N}} \derivdia{M} = \trans{p^{-1}}{\derivdia{N}} \\
 &= \sm{e:\bottom{M} \cong \bottom{N}} \derivdia{M} = \trans{\ua(e)^{-1}}{\derivdia{N}} \\
 &= \sm{e:\bottom{M} \cong \bottom{N}} \derivdia{M} = (\derivcat{\L}{e})^*\derivdia{N} \\
 &= \sm{e:\bottom{M} \cong \bottom{N}} \derivdia{M} \cong (\derivcat{\L}{e})^*\derivdia{N} \\
 &\equiv ( M \cong N )
 \end{align*}
 where the second identification is the univalence axiom and the fourth is our inductive hypothesis. This equivalence, from left to right, is $\idtoiso_{M,N}$.
 \end{proof}

\Cref{prop:uni1} relies on the univalence axiom;
conversely, the univalence axiom can be recovered as an instance of \Cref{prop:uni1}, for the signature consisting of just one sort.

\begin{example}
  When precategories are regarded as $\Lcat$-structures, their isomorphisms are the \emph{isomorphisms of precategories} from \cite[Def.~6.9]{AKS13} and~\cite[Def. 9.4.8]{HTT}: functors that induce equivalences on hom-types and also equivalences on types of objects (relative to homotopical \emph{identifications} of objects, not isomorphisms in the category structure).
\end{example}

\begin{remark}
 We expect that isomorphisms of structures can equivalently be characterized via the existence of a structure morphism in the other direction and composites that are (homotopically) identical to identities.
\end{remark}

The analogue for $\L$-structures of \emph{equivalences} of precategories, called \emph{(split-surjective) equivalences} of $\L$-structures, will be introduced in \Cref{sec:hsip}.
Our main result, \Cref{thm:hsip}, will be that between univalent $\L$-structures these are also equivalent to identifications.
However, first we have to define univalence of $\L$-structures.

\section{Indiscernibility and Univalence}\label{sec:FOLDS-iso-uni}

In this section we define indiscernibility of objects within an $\L$-structure. 
We then define a structure to be univalent when indiscernibility coincides with identification of objects.

Let $M$ be an $\L$-structure, $K : \bottom{\L}$, and $a, b : \bottom{M}K$.
To define indiscernibilities from $a$ to $b$, we
consider a new $\L$-structure obtained by adding to $M$ one element at sort $K$: a ``joker'' element. 
We can substitute this new element by $a$ or by $b$; 
below, we call the obtained structures $\partial_a M$ and $\partial_b M$, respectively. 
An indiscernibility from $a$ to $b$ will be defined below to be an isomorphism of structures from $\partial_a M$ to $\partial_b M$ that is the identity on all the sorts not depending on the joker element.
Intuitively, this means that $a$ and $b$ are isomorphic when one cannot discern one from the other using the rest of the structure $M$.
To make this intuition formal, we need two auxiliary definitions:
\begin{definition}\label{def:ind}
 Consider $L: \U$, $K:L$, $M: L \to \U$, $a: M(K)$. We define the \defemph{indicator function of $K$} to be
 \[ [K] \eqdef  \lambda x. \left(  x = K \right) : L \to \U\] 
 and we define the
  function $a: \prd{x: L} [K](x) \to M(x)$ by sending $\refl_K: [K](K)$ to $a:M(K)$. 
  
Below we consider the pointwise disjoint union $M+[K]$ in $L \to \U$, the canonical injection $\iota_M: \prd{x: L} M(x) \to (M+[K])(x)$, and the induced function $\copair{1_M}{a}:  \prd{x: L}  (M+[K])(x)\to M(x) $.
\end{definition}

\begin{definition}
Consider $\L: \Sig(n+1)$, $K:\bottom{\L}$, $M: \Struc{\L}$, $a: \bottom{M}(K)$.
Define 
\[\partial_a M  \eqdef  (\derivcat{\L}{\copair{1_{\bottom{M}}}{a}})^*M' : \Struc{\derivcat{\L}{\bottom{M} + [K]}}.\]
 \end{definition}
 
Note that we require the type $\bottom{\L}$ to be fibrant so that the fibrant indicator function $[K]$ exists.

Now we can define the type of indiscernibilities between objects within an $\L$-structure: 
 
\begin{definition}[\defemph{Indiscernibility}]
\label{def:iso-within-a-structure} 
Consider $\L : \Sig(n+1)$, $K:\bottom{\L}$, $M: \Struc{\L}$, $a,b: \bottom{M}(  K)$. 
%We define the type $a \cong_{K}^M b$ to be the type of equalities $p: \partial_a M = \partial_b M$ such that $\epsilon_a^{-1} * (\derivcat{\L}{\iota_M})^*p * \epsilon_b =_{M' = M'} \refl_{M'}$
We define the type of \defemph{indiscernibilities from $a$ to $b$} to be
\[ 
(a \fiso b) \eqdef \sm{p \colon  \partial_a M \: = \: \partial_b M} \epsilon_a^{-1} \concat (\derivcat{\L}{\iota_{\bottom{M}}})^*p \concat \epsilon_b =_{M' = M'} \refl_{M'}, 
\]
where $\epsilon_{x}$ is the concatenated identification
\begin{align*}
(\derivcat{\L}{\iota_{\bottom{M}}})^* \partial_x M'  &\converts (\derivcat{\L}{\iota_{\bottom{M}}})^* (\derivcat{\L}{\langle 1_{\bottom{M}}, x \rangle})^*  M'\\ &= (\derivcat{\L}{  \langle 1_{\bottom{M}}, x \rangle \circ \iota_{\bottom{M}}})^* M'=  (\derivcat{\L}{  1_{\bottom{M}}})^* M' = M'.
\end{align*}
\end{definition}

\begin{lemma}
 The type $a \fiso b$ of \Cref{def:iso-within-a-structure} is fibrant. \qed
\end{lemma}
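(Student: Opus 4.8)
The plan is to observe that $a \fiso b$ is assembled entirely from $\Sigma$-types and homotopical identity types of structure types, and to deduce fibrancy purely from the closure properties of fibrant types recorded in \Cref{sec:2ltt}. The facts I will use are: the earlier lemma that $\Struc{\L'}$ is fibrant for every signature $\L'$; that every fibrant type is inner by axiom (T3), so that the homotopical identity type $x = y$ of a fibrant type is again fibrant; and that fibrant types are closed under $\Sigma$ by \cite[Lemma~3.5]{2LTT}.

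First I would unfold \Cref{def:iso-within-a-structure}: the type $a \fiso b$ is a $\Sigma$-type whose base is the identity type $\partial_a M = \partial_b M$ and whose fiber over a term $p$ of the base is the identity type
\[ \epsilon_a^{-1} \concat (\derivcat{\L}{\iota_{\bottom{M}}})^*p \concat \epsilon_b \; =_{M' = M'} \; \refl_{M'}. \]
By the closure of fibrant types under $\Sigma$, it then suffices to check that the base is fibrant and that each fiber is fibrant.

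For the base, both $\partial_a M$ and $\partial_b M$ are elements of $\Struc{\derivcat{\L}{\bottom{M} + [K]}}$, which is fibrant by the lemma on structure types; hence its homotopical identity type $\partial_a M = \partial_b M$ is fibrant. For the fibers, fix $p$. The two endpoints $\epsilon_a^{-1} \concat (\derivcat{\L}{\iota_{\bottom{M}}})^*p \concat \epsilon_b$ and $\refl_{M'}$ both inhabit the type $M' = M'$, which is the homotopical identity type of $M' : \Struc{\derivcat{\L}{\bottom{M}}}$; since $\Struc{\derivcat{\L}{\bottom{M}}}$ is fibrant, so is $M' = M'$, and therefore so is the displayed identity type inside it. Closure under $\Sigma$ then yields that $a \fiso b$ is fibrant.

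The one point requiring care, which is bookkeeping rather than a genuine obstacle, is to verify that the left-hand endpoint $\epsilon_a^{-1} \concat (\derivcat{\L}{\iota_{\bottom{M}}})^*p \concat \epsilon_b$ is a well-typed identification from $M'$ to $M'$, so that forming its identity type with $\refl_{M'}$ is meaningful. This amounts to tracing the chain of judgmental equalities and functoriality identifications defining $\epsilon_a$ and $\epsilon_b$ (so that each $\epsilon_x$ runs from $(\derivcat{\L}{\iota_{\bottom{M}}})^* \partial_x M'$ to $M'$) and checking that $(\derivcat{\L}{\iota_{\bottom{M}}})^*p$, the image of the identification $p$ under pullback, connects the intermediate endpoints correctly. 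Once the endpoints are seen to match, fibrancy follows formally from the two closure principles above.
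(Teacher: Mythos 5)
Your proof is correct and follows exactly the argument the paper leaves implicit (the lemma is stated with its proof omitted as immediate): the type $a \fiso b$ is a $\Sigma$-type whose base and fibers are homotopical identity types over the fibrant types $\Struc{\derivcat{\L}{\bottom{M}+[K]}}$ and $\Struc{\derivcat{\L}{\bottom{M}}}$, so fibrancy follows from the fibrancy of structure types together with closure of fibrant types under identity types and $\Sigma$. Your endpoint bookkeeping for $\epsilon_a^{-1} \concat (\derivcat{\L}{\iota_{\bottom{M}}})^*p \concat \epsilon_b$ is also the right (and only) point needing verification, and it checks out.
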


\begin{remark}
  Using identification instead of isomorphism of structures in \Cref{def:iso-within-a-structure} is justified by \Cref{prop:uni1}.
\end{remark}

\begin{lemma}\label{lem:iso-equivalent}
 The type of indiscernibilities $a \fiso b$ of \Cref{def:iso-within-a-structure} is equivalent to the type
\[ 
\pushQED{\qed} 
 \sm{p \colon  \partial_a M \: = \: \partial_b M}  (\derivcat{\L}{\iota_{\bottom{M}}})^*p = \epsilon_a \concat \epsilon_b^{-1}.
 \qedhere
\popQED
\]
% see https://tex.stackexchange.com/a/66221
\end{lemma}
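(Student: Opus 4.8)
The plan is to notice that both types in the statement are $\Sigma$-types over the same base, namely the type of identifications $p : \partial_a M = \partial_b M$; their fibers over a fixed $p$ are the two displayed path conditions. It therefore suffices to exhibit, for each $p$, an equivalence between these two fiber conditions, and then to conclude by the standard fact that a fiberwise equivalence induces an equivalence of total spaces \cite[Theorem~4.7.7]{HTT}. The resulting equivalence will act as the identity on the $p$-component.

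First I would abbreviate $q \eqdef (\derivcat{\L}{\iota_{\bottom{M}}})^*p$, an identification from $(\derivcat{\L}{\iota_{\bottom{M}}})^*\partial_a M$ to $(\derivcat{\L}{\iota_{\bottom{M}}})^*\partial_b M$. By definition $\epsilon_a : (\derivcat{\L}{\iota_{\bottom{M}}})^*\partial_a M = M'$ and $\epsilon_b : (\derivcat{\L}{\iota_{\bottom{M}}})^*\partial_b M = M'$, so $q$, $\epsilon_a$, and $\epsilon_b$ are composable as required. The fiber of $(a \fiso b)$ over $p$ is the condition $\epsilon_a^{-1} \concat q \concat \epsilon_b = \refl_{M'}$, while the fiber of the second type over $p$ is $q = \epsilon_a \concat \epsilon_b^{-1}$; note that both sides of the latter equation live in the path type $(\derivcat{\L}{\iota_{\bottom{M}}})^*\partial_a M = (\derivcat{\L}{\iota_{\bottom{M}}})^*\partial_b M$.

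The core step is pure path algebra. Concatenation with the fixed identifications $\epsilon_a$ and $\epsilon_b$ gives an equivalence $w \mapsto \epsilon_a^{-1} \concat w \concat \epsilon_b$ from the path type $\big((\derivcat{\L}{\iota_{\bottom{M}}})^*\partial_a M = (\derivcat{\L}{\iota_{\bottom{M}}})^*\partial_b M\big)$ to $(M' = M')$, since pre- and post-composition with a fixed identification are each equivalences. Applying this equivalence to $\epsilon_a \concat \epsilon_b^{-1}$ yields $\epsilon_a^{-1} \concat (\epsilon_a \concat \epsilon_b^{-1}) \concat \epsilon_b$, which reduces to $\refl_{M'}$ by the associativity, inverse, and unit laws for concatenation. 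Because an equivalence reflects identifications, $q = \epsilon_a \concat \epsilon_b^{-1}$ holds if and only if $\epsilon_a^{-1} \concat q \concat \epsilon_b = \refl_{M'}$ holds, which is exactly the desired fiberwise equivalence.

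I do not expect any genuine obstacle here: the argument is entirely formal manipulation of paths. The only points requiring care are bookkeeping the endpoints so that all concatenations and inverses are well-typed, and checking that $\epsilon_a^{-1} \concat (\epsilon_a \concat \epsilon_b^{-1}) \concat \epsilon_b$ indeed simplifies to $\refl_{M'}$ through the groupoid laws. Packaging the per-$p$ equivalences through the total-space lemma then delivers the claimed equivalence of types.
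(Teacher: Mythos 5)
Your proof is correct: the fiberwise path-algebra argument (conjugation by the fixed identifications $\epsilon_a$, $\epsilon_b$ is an equivalence of path types, combined with the total-space lemma) is exactly the intended justification, which the paper itself omits as immediate, marking the lemma with a \qed. No gaps; your endpoint bookkeeping and the reduction of $\epsilon_a^{-1} \concat (\epsilon_a \concat \epsilon_b^{-1}) \concat \epsilon_b$ to $\refl_{M'}$ are all as required.
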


We now define \emph{univalence of $\L$-structures}. 
For this, we first need to define the canonical map from identifications to indiscernibilities.

\begin{definition}[\defemph{Identity indiscernibility}]
\label{def:id-iso}
For $\L: \Sig(n+1)$, $K:\bottom{\L}$, $M: \Struc{\L}$, and $m : \bottom{M}(K)$, we define the indiscernibility $1 : m \fiso m$.
Let $M: \Struc{\L}$.
For any $a: \bottom{M}(K)$, we have $\refl_{\partial_a M}: \partial_a M = \partial_a M$. Then
\begin{align*} \MoveEqLeft \epsilon_a^{-1} \concat (\derivcat{\L}{\iota_M})^*(\refl_{\partial_a M}) \concat \epsilon_a  
  \\
 &\steq \epsilon_a^{-1} \concat \refl_{(\derivcat{\L}{\iota_M})^*\partial_a M} \concat \epsilon_a  \\
 &=  \refl_{M'} \enspace ,
\end{align*}
where the second identification uses the groupoidal properties of types.
This gives the desired indiscernibility.
\end{definition}

\begin{definition}
Consider $\L: \Sig(n+1)$, $K:\bottom{\L}$, $M: \Struc{\L}$. 
For any $a,b: \bottom{M}(K)$, let $\idtoiso_{a,b}: ( a = b) \to (a \fiso b) $ be the function which sends $\refl_a$ to the identity indiscernibility exhibited in \Cref{def:id-iso}. 

We say that $M$ is \defemph{univalent at $K$} if for all $a,b : \bottom{M}(K)$, $\idtoiso_{a,b} : (a = b) \to (a \fiso b)$ is an equivalence.
\end{definition} 
\begin{definition}[\defemph{Univalence of structures}]
We define by induction what it means for a structure of a signature $\L:\Sig(n)$ to be univalent. 

When $n \eqdef 0$, every structure $M: \Struc{\L}$ is univalent. 

Otherwise, a structure $M: \Struc{\L}$ is univalent if $M$ is univalent at all $K:\bottom{\L}$ and $M'$ is univalent.

Let $\uStruc{\L}$ denote the type of univalent structures of $\L$.
\end{definition} 

\begin{lemma}
 Let $\L$ be a signature.
  \begin{itemize}
  \item The type $\uStruc{\L}$ is fibrant. 
  \item For any $\L$-structure, ``being univalent'' is a proposition.
  \item Identification of univalent $\L$-structures corresponds
         to identification of the underlying $\L$-struc\-tures.
   \qed
  \end{itemize}

\end{lemma}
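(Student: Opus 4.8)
The plan is to unfold $\uStruc{\L}$ as the total type $\sm{M:\Struc{\L}} \mathsf{isUniv}_\L(M)$, where $\mathsf{isUniv}_\L(M)$ denotes the type asserting that $M$ is univalent in the sense of the preceding definition; explicitly, for $\L:\Sig(n+1)$ it unfolds to $\big(\prd{K:\bottom{\L}} \prd{a,b:\bottom{M}(K)} \isEquiv(\idtoiso_{a,b})\big) \times \mathsf{isUniv}_{\derivcat{\L}{\bottom{M}}}(\derivdia{M})$, and for $\L:\Sig(0)$ it is $\onetype$. All three bullets then follow from two structural facts about $\mathsf{isUniv}_\L$ — that it is, pointwise in $M$, a fibrant proposition — each of which I would prove by induction on the height $n$ of $\L$.

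First I would show that $\mathsf{isUniv}_\L(M)$ is a proposition for every $M$, by induction on $n$. The base case is $\onetype$, which is a proposition. For the step, the first factor is a $\Pi$-type (over $\bottom{\L}$ and over pairs $a,b:\bottom{M}(K)$) whose body is $\isEquiv(\idtoiso_{a,b})$; this is a proposition, since being an equivalence is always a proposition in HoTT, and a $\Pi$ of propositions is a proposition. The second factor is a proposition by the inductive hypothesis applied to $\derivdia{M}$. A product of propositions is a proposition, which establishes the second bullet.

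Next I would verify fibrancy of $\mathsf{isUniv}_\L(M)$, again by induction. Here I use that $\bottom{\L}$ is fibrant (it is part of the data of a signature), that each $\bottom{M}(K):\U$ is fibrant, that the identity type $a=b$ of a fibrant type is fibrant, and that $a\fiso b$ is fibrant by the immediately preceding lemma; hence $\idtoiso_{a,b}$ is a map between fibrant types and so $\isEquiv(\idtoiso_{a,b})$ is fibrant. Closure of fibrant types under $\Pi$ and $\Sigma$ \cite[Lemma~3.5]{2LTT} then makes the first factor fibrant, and the inductive hypothesis makes the second factor fibrant, so their product is fibrant. Since $\Struc{\L}$ is fibrant, one further application of closure under $\Sigma$ shows that $\uStruc{\L} \converts \sm{M:\Struc{\L}} \mathsf{isUniv}_\L(M)$ is fibrant, giving the first bullet.

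Finally, the third bullet is a consequence of the second: because $\mathsf{isUniv}_\L$ is a family of propositions over $\Struc{\L}$, the first projection $\uStruc{\L} \to \Struc{\L}$ is an embedding, so for univalent structures $(M,p)$ and $(N,q)$ the induced map $\big((M,p)=(N,q)\big) \to (M=N)$ is an equivalence — that is, identification of univalent $\L$-structures coincides with identification of the underlying $\L$-structures. I expect the only genuinely delicate point to be the fibrancy bookkeeping of the third paragraph: one must confirm that every ingredient feeding into the outer $\Pi$ over $\bottom{\L}$ is fibrant (in particular $a\fiso b$, which is exactly why the preceding fibrancy lemma is needed) before invoking the closure result, since $\Pi$ over a non-fibrant index need not preserve fibrancy.
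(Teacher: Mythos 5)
Your proof is correct and is essentially the argument the paper has in mind: the lemma is stated with its proof omitted as routine, and the intended justification is exactly your unfolding of ``being univalent'' as an inductively defined predicate, followed by induction on the height of $\L$ using closure of propositions and of fibrant types under $\Pi$ and $\Sigma$ (over fibrant index types), plus the standard fact that the total space of a family of propositions has identity types equivalent to those of the base. Your closing caveat about fibrancy of the $\Pi$-index types, and the role of the preceding lemma on fibrancy of $a\foldsiso b$, is exactly the right bookkeeping.
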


\begin{example}\label{eg:ht1-univalence}
  Suppose $\L$ has height 1, hence is just a type $\bottom{\L}$.
  Consider an $\L$-structure $M:\bottom{\L}\to \U$ and $a,b:M(K)$.
  Then $\partial_a M$ and $\partial_b M$ are structures for the trivial signature of height 0, hence uniquely identified; thus $(a \fiso b)=\onetype$.
  So any structure of a signature $\L$ of height 1 is univalent just when it consists entirely of propositions.
\end{example}

\begin{example}
  Recall from \cref{egs:folds-sigs} that for $\L=\LcatE$, we have $\bottom{\L} = \onetype$, $\bottom{M}= MO:\U$, $\bottom{\derivcat{\L}{MO}} = MO \times MO$, and $\bottom{(\derivdia{M})} = MA : MO\times MO \to \U$, while
  % $\derivcat{(\derivcat{\L}{MO})}{MA}$ is the height-1 signature determined by the type
  % \begin{multline*}
  %   \big(\tsm{x,y,z:MO} MA(x,y)\times MA(y,z) \times MA(x,z)\big) \\
  %   +\big(\tsm{x:MO} MA(x,x)\big)
  %   +\big(\tsm{x,y:MO} MA(x,y)\times MA(x,y)\big)
  % \end{multline*}
   $\doublederivdia{M}$ consists of the sorts $MT_{x,y,z}(f,g,h)$, $MI_x(f)$, and $ME_{x,y}(f,g)$.
  By \Cref{eg:ht1-univalence}, $\doublederivdia{M}$ is univalent just when all these types are propositions.
  Now for any $a,b:MO$, we have
  \[ (MA+[A(a,b)])(x,y) = MA(x,y) + ((a=x)\times (b=y)). \]
  Thus, the height-1 signature $\derivcat{(\derivcat{\L}{MO})}{MA+[A(a,b)]}$ is % determined by the type
  \begin{multline*}
    \big(\tsm{x,y,z:MO} (MA(x,y) + ((a=x) \times (b=y))) \narrowbreak \times (MA(y,z) + ((a=y)\times (b=z)))\narrowbreak \times (MA(x,z) + ((a=x)\times (b=z)))\big) \\
    +\big(\tsm{x:MO} (MA(x,x) + ((a=x) \times (b=x)))\big)\\
    + \big(\tsm{x,y:MO} (MA(x,y) + ((a=x) \times (b=y))) \narrowbreak \times (MA(x,y) + ((a=x) \times (b=y))) \big).
  \end{multline*}
  By distributing $\sum$ and $\times$ over $+$
  % , this is equivalent to
  % \begin{multline*}
  %   \big(\tsm{x,y,z:MO} MA(x,y)\times MA(y,z) \times MA(x,z)\big)\\
  %   + \big(\tsm{x,y,z:MO} (a=x) \times (b=y)\times MA(y,z) \times MA(x,z)\big)\\
  %   + \big(\tsm{x,y,z:MO} MA(x,y)\times (a=y)\times (b=z) \times MA(x,z)\big)\\
  %   + \big(\tsm{x,y,z:MO} MA(x,y)\times MA(y,z) \times (a=x)\times (b=z)\big)\\
  %   + \big(\tsm{x,y,z:MO} (a=x) \times (b=y)\times (a=y)\times (b=z) \times MA(x,z)\big)\\
  %   + \big(\tsm{x,y,z:MO} MA(x,y)\times (a=y)\times (b=z) \times (a=x)\times (b=z)\big)\\
  %   + \big(\tsm{x,y,z:MO} (a=x) \times (b=y)\times MA(y,z) \times (a=x)\times (b=z)\big)\\
  %   + \big(\tsm{x,y,z:MO} (a=x) \times (b=y)\times (a=y)\times (b=z) \times (a=x)\times (b=z)\big)\\
  %   + \big(\tsm{x:MO} MA(x,x)\big) \\
  %   + \big(\tsm{x:MO} (a=x) \times (b=x)\big)\\
  %   + \big(\tsm{x,y:MO} MA(x,y) \times MA(x,y)\big)\\
  %   + \big(\tsm{x,y:MO} MA(x,y) \times (a=x) \times (b=y)\big)\\
  %   + \big(\tsm{x,y:MO} (a=x) \times (b=y)\times MA(x,y)\big)\\
  %   + \big(\tsm{x,y:MO} (a=x) \times (b=y)\times (a=x) \times (b=y)\big)
  % \end{multline*}
  % By
  and contracting some singletons, this is equivalent to
  \begin{align}
    \MoveEqLeft \big(\tsm{x,y,z:MO} MA(x,y)\times MA(y,z) \times MA(x,z)\big) \label{eq:sia0} \\
    &+ \big(\tsm{z:MO} MA(b,z)\times MA(a,z)\big)\label{eq:sia1}\\
    &+ \big(\tsm{x:MO} MA(x,a)\times MA(x,b)\big)\label{eq:sia2}\\
    &+ \big(\tsm{y:MO} MA(a,y)\times MA(y,b) \big)\label{eq:sia3}\\
    &+ \big((a=b)\times MA(a,b)\big)\label{eq:sia4}\\
    &+ \big(MA(a,a)\times (b=b)\big)\label{eq:sia5}\\
    &+ \big((a=a)\times MA(b,b)\big)\label{eq:sia6}\\
    &+ \big((a=b)\times (a=a)\times (b=b)\big)\label{eq:sia7}\\
    &+ \big(\tsm{x:MO} MA(x,x)\big) \label{eq:sia00}\\
    &+ \big((a=b)\big)\label{eq:sia8}\\
    &+ \big(\tsm{x,y:MO} MA(x,y) \times MA(x,y)\big) \label{eq:sia000}\\
    &+ \big(MA(a,b)\big)\label{eq:sia9}\\
    &+ \big(MA(a,b)\big)\label{eq:sia10}\\
    &+ \big((a=a) \times (b=b)\big).\label{eq:sian}
  \end{align}
  Thus for $f,g:MA(a,b)$, an identification $\partial_f M = \partial_g M$ consists of equivalences between instances of the predicates $MT,MI,ME$ indexed over the types~\eqref{eq:sia0}--\eqref{eq:sian}.
  The condition on restriction along $\iota$ says that the equivalences corresponding to~\eqref{eq:sia0}, \eqref{eq:sia00}, and~\eqref{eq:sia000} are the identity, while those corresponding to~\eqref{eq:sia1}--\eqref{eq:sia3}, \eqref{eq:sia4}--\eqref{eq:sia7}, \eqref{eq:sia8}, and~\eqref{eq:sia9}--\eqref{eq:sian} yield respectively the equivalences~\eqref{eq:fia1}--\eqref{eq:fia3}, \eqref{eq:fia4}--\eqref{eq:fia7}, \eqref{eq:fia8}, and~\eqref{eq:fia9}--\eqref{eq:fian} from \Cref{sec:foldscat-uf}.
  Hence, indiscernibilities $f\fiso g$ in the sense of \cref{def:iso-within-a-structure} coincide with the indiscernibilities from \cref{defn:foldsiso-arrows}.

  Now moving back down to the bottom rank, an $(\derivcat{\L}{MO})$-structure consists of $MA : MO\times MO \to \U$ together with appropriately typed families $MT$, $MI$, and $ME$.
  % itself is the signature of the underlying data of a FOLDS-category with object set $MO$.
  Since \( (MO + [O]) = MO + \onetype\), for $a:MO$ the $0^{\mathrm{th}}$ rank of $\partial_a M$ is
  \[ (\partial_a M)A : (MO+\onetype) \times (MO+\onetype) \to \U \]
  or equivalently 
  \[ (\partial_a M)A : (MO\times MO) + MO + MO + \onetype \to \U \]
  consisting of the types $(MA(x,y))_{x,y:MO}$, $(MA(a,y))_{y:MO}$, $(MA(x,a))_{x:MO}$, and $MA(a,a)$.
  The $1^{\mathrm{st}}$ rank consists of $MT$, $MI$, and $ME$ pulled back appropriately to these families.
  Thus, an identification $\partial_a M = \partial_b M$ consists of equivalences
  \begin{align}
    MA(x,y) &\simeq MA(x,y) \label{eq:sio0}\\
    MA(x,a) &\simeq MA(x,b) \label{eq:sio1}\\
    MA(a,y) &\simeq MA(b,y) \label{eq:sio2}\\
    MA(a,a) &\simeq MA(b,b) \label{eq:sio3}
  \end{align}
  for all $x,y:MO$ that respect the predicates $MT$, $MI$, $ME$.
  The condition on restriction along $\iota$ says that the equivalences~\eqref{eq:sio0} are the identity, while the remaining~\eqref{eq:sio1}--\eqref{eq:sio3} correspond respectively to the equivalences $\phi_{x\bullet}$, $\phi_{\bullet y}$, and $\phi_{\bullet\bullet}$ from \Cref{sec:foldscat-uf}.
  Finally, respect for $MT$, $MI$, $ME$ specializes to~\eqref{eq:Txya}--\eqref{eq:Iaa} together with analogous equivalences for $E$ that are trivial under ``standardness'' of identifications.
  Thus, indiscernibilities $a\fiso b$ in the sense of \cref{def:iso-within-a-structure} coincide with the indiscernibilities from \cref{defn:folds-iso-obj}.
\end{example}

Our first general observations about univalent structures give truncation bounds for their sorts and for the type of such structures.

\begin{theorem}\label{thm:hlevel}
Let $\L: \Sig(n+1)$, $M: \uStruc{\L}$, $K: \bottom{\L}$. Then $\bottom{M}(K)$ is an \nminusone-type.
\end{theorem}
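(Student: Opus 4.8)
The plan is to argue by induction on the height of the signature, i.e.\ on $n$. For the base case $n=0$ we have $\L:\Sig(1)$, and by \Cref{eg:ht1-univalence} a univalent structure of a height-$1$ signature consists entirely of propositions, so $\bottom{M}(K)$ is a $(-1)$-type, which is exactly the claim for $n=0$. For the inductive step I assume the theorem for all signatures of height at most $n$ and prove it for $\L:\Sig(n+1)$. Since $M$ is univalent, its derivative $\derivdia{M}$ is a univalent structure over $\derivcat{\L}{\bottom{M}}:\Sig(n)$, and so are all of its iterated derivatives; applying the induction hypothesis to the $r$-th derivative of $\derivdia{M}$ (which lives over a signature of height $n-r\ge 1$) shows that the rank-$r$ sort of $\derivdia{M}$ is an $(n-2-r)$-type for every $r$. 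It now suffices to prove that $(a=b)$ is an $(n-2)$-type for all $a,b:\bottom{M}(K)$, for then $\bottom{M}(K)$ is an $(n-1)$-type. Univalence of $M$ at $K$ gives $(a=b)\simeq (a\fiso b)$, so I bound $a\fiso b$ in its place. By \Cref{lem:iso-equivalent}, $a\fiso b$ is a $\Sigma$-type whose base is the identification type $\partial_a M = \partial_b M$ of structures over $\derivcat{\L}{\bottom{M}+[K]}:\Sig(n)$, and whose fibre is a $2$-dimensional identity type.

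The heart of the argument is bounding the base. By \Cref{prop:uni1}, $(\partial_a M = \partial_b M)$ is equivalent to the isomorphism type $(\partial_a M \cong \partial_b M)$, and since $\isIso$ is a proposition (\Cref{lem:isisoprop}), the truncation level of the latter is bounded by that of the hom-type $\hom_{\Struc{\derivcat{\L}{\bottom{M}+[K]}}}(\partial_a M,\partial_b M)$. Unwinding \Cref{def:iso-of-structures}, this hom-type is a nested $\Sigma$ of $\Pi$-types whose codomains are (reindexings of) the sorts of $\partial_b M$, so it is truncated at the maximum of the truncation levels of those sorts. Pullback only reindexes sorts, leaving their pointwise truncation levels unchanged, so the rank-$r$ sort of $\partial_b M$ has the same level as the rank-$r$ sort of $\derivdia{M}$, namely $(n-2-r)$; the maximum over $r\ge 0$ is attained at $r=0$, so the hom-type, and hence $(\partial_a M = \partial_b M)$, is an $(n-2)$-type.

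For the fibre, the same combination of \Cref{prop:uni1} and \Cref{lem:isisoprop} shows that the ambient identification type of structures over $\derivcat{\L}{\bottom{M}}$ in which the fibre condition lives is again an $(n-2)$-type, so the fibre itself, being one dimension higher, is an $(n-3)$-type and a fortiori an $(n-2)$-type. A $\Sigma$-type of $(n-2)$-types over an $(n-2)$-type base is an $(n-2)$-type, so $a\fiso b$, and therefore $a=b$, is an $(n-2)$-type, completing the induction. The step I expect to be the main obstacle is the bookkeeping in the middle paragraph: showing that the hom-type of structures is truncated at the maximum of its codomains' levels, and that pullback preserves sort-wise truncation. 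Both require carefully unwinding the inductive definitions of morphisms and of pullback, and one must check that the standard $\Sigma$- and $\Pi$-closure facts for $k$-types are invoked only for $k\ge -1$ (which holds throughout the inductive step, as there $n\ge 1$), with the degenerate $n=0$ situation absorbed into the base case.
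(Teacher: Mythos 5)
Your proof is correct, but it takes a genuinely different route from the paper's. The paper proves \Cref{thm:hlevel} by a simultaneous induction with \Cref{thm:hlevel1}, introducing a companion statement $Q(n)$ asserting that the loop space $\alpha^*N = \alpha^*N$ is an $(n-2)$-type for any pullback of a univalent structure $N$ along a signature morphism $\alpha$; this strengthened hypothesis is precisely the paper's device for the fact that $\partial_a M$ and $\partial_b M$ are pullbacks of $\derivdia{M}$ and hence need not themselves be univalent, so the theorem being proved cannot be applied to them directly. You handle the same obstacle differently: \Cref{prop:uni1} (valid for arbitrary, not necessarily univalent, structures) converts $\partial_a M = \partial_b M$ into $\partial_a M \cong \partial_b M$, which by \Cref{lem:isisoprop} sits inside the hom-type of structures as a subtype, and you bound the hom-type by the pointwise truncation levels of the codomain's sorts --- levels that pullback preserves, being mere reindexing, and that the unstrengthened induction hypothesis supplies once applied to the iterated derivatives of $M$ (which, unlike $\partial_b M$, \emph{are} univalent). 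The cost is the bookkeeping lemma you correctly flag as the crux --- that $\hom$-types of structures are truncated at the maximum pointwise sort level of the codomain, and that pullback preserves these levels --- which needs its own induction on height, whereas the paper instead leans on the $\Sigma$-decomposition of identity of structures and \cite[Thm.~7.2.7]{HTT}. What your route buys: a single self-contained induction rather than a mutual one, and a reusable truncation bound on hom-types which in fact yields \Cref{thm:hlevel1} as an easy corollary (identifications of univalent structures are identifications of the underlying structures, hence equivalent to isomorphism types bounded at level $(n-2)$), so nothing is lost relative to the paper's simultaneous treatment of both theorems; your care with the $k \ge -1$ side conditions and with the $n=0$ base case via \Cref{eg:ht1-univalence} is also sound.
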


\begin{theorem}\label{thm:hlevel1}
Let $\L: \Sig(n)$. The type of univalent $\L$-struc\-tures is an \nminusone-type.
\end{theorem}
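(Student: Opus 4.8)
The plan is to induct on $n$ over the strict natural numbers. The base case $n \converts 0$ is immediate, since $\uStruc{\L} \converts \onetype$ is contractible and hence a $(-1)$-type. For the inductive step, fix $\L : \Sig(n+1)$ and assume the statement for every signature in $\Sig(n)$; the goal is that $\uStruc{\L}$ is an $n$-type. Because being univalent is a proposition (the preceding lemma), $\uStruc{\L}$ is the total space of a propositional predicate over $\Struc{\L}$, so for univalent $M = (\bottom{M}, M')$ and $N = (\bottom{N}, N')$ the identity type $(M = N)$ computed in $\uStruc{\L}$ agrees with the one computed in $\Struc{\L}$. Hence it suffices to show that this latter identity type is always an $(n-1)$-type.

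I would then use the standard description of identity types in a $\Sigma$-type,
\[ (M = N) \simeq \sm{p : \bottom{M} = \bottom{N}} \left( \trans{p}{M'} =_{\Struc{\derivcat{\L}{\bottom{N}}}} N' \right), \]
and bound the base and the fibres separately, using that $(n-1)$-types are closed under $\Sigma$. For the base, Theorem \ref{thm:hlevel} applies to the univalent structures $M$ and $N$, giving that each $\bottom{M}(K)$ and $\bottom{N}(K)$ is an $(n-1)$-type. By function extensionality and univalence, $(\bottom{M} = \bottom{N})$ is equivalent to $\prd{K : \bottom{\L}} (\bottom{M}(K) \simeq \bottom{N}(K))$; the type of equivalences between two $(n-1)$-types is again an $(n-1)$-type (it is a $\Sigma$ of the $(n-1)$-type $\bottom{M}(K) \to \bottom{N}(K)$ with the propositional $\isEquiv$ predicate), and $(n-1)$-types are closed under $\Pi$, so the base is an $(n-1)$-type.

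For the fibres, I would apply the inductive hypothesis to the derived signature $\derivcat{\L}{\bottom{N}} : \Sig(n)$, which gives that $\uStruc{\derivcat{\L}{\bottom{N}}}$ is an $(n-1)$-type. Writing $X \converts \trans{p}{M'}$, I want the fibre $(X = N')$, computed in $\Struc{\derivcat{\L}{\bottom{N}}}$, to be an $(n-1)$-type. It suffices to prove this under the assumption that the fibre is inhabited, since for $m \geq -1$ any type $T$ admitting a map $T \to (T \text{ is an } m\text{-type})$ is itself an $m$-type. Under that assumption, transporting the univalence of $N'$ backwards along an inhabiting identification shows that $X$ is univalent; then, just as in the reduction above, the identity type $(X = N')$ in $\Struc{\derivcat{\L}{\bottom{N}}}$ coincides with the identity type of $X$ and $N'$ in $\uStruc{\derivcat{\L}{\bottom{N}}}$, which is an $(n-2)$-type and in particular an $(n-1)$-type. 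Closing the displayed $\Sigma$-type under the $(n-1)$-type level then yields that $(M = N)$ is an $(n-1)$-type, completing the induction. I expect the main obstacle to be the fibre step, specifically organizing the argument so that the inductive hypothesis at $\derivcat{\L}{\bottom{N}}$ can be invoked even though $\trans{p}{M'}$ is not presented as a univalent structure a priori; the ``assume the fibre inhabited'' device is what makes this clean.
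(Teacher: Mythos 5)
Your argument is correct, but it is organized quite differently from the paper's. The paper proves \Cref{thm:hlevel} and \Cref{thm:hlevel1} by a \emph{single simultaneous induction}: it introduces, alongside the statement $P(n)$ of \Cref{thm:hlevel}, a strengthened auxiliary statement $Q(n)$ asserting that the loop space $\alpha^* N = \alpha^* N$ is an $(n-2)$-type for every pullback of a univalent structure $N$ along an \emph{arbitrary} signature morphism $\alpha$, and then extracts \Cref{thm:hlevel1} from $Q(n)$ via \cite[Thm.~7.2.7]{HTT}. The strengthening to pullbacks is what makes the paper's induction close: the proof of $P(n+1)$ must bound identity types such as $\partial_a M = \partial_b M$, where $\partial_a M$ is a pullback of the univalent $M'$ but need not itself be univalent, so a plain inductive hypothesis about $\uStruc{-}$ would not apply there. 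You sidestep this issue entirely by taking \Cref{thm:hlevel} as a black box and inducting only on the statement of \Cref{thm:hlevel1}; the one place where a possibly non-univalent transported structure appears --- the fibre $\trans{p}{M'} = N'$ --- is neutralized by your inhabited-fibre device (an inhabitant transports the univalence of $N'$ onto $\trans{p}{M'}$, and a type mapping into its own $m$-truncatedness is $m$-truncated for $m \geq -1$); based path induction on $p$, after generalizing over $\bottom{N}$ and $N'$, would work just as well. This reliance on \Cref{thm:hlevel} is legitimate and non-circular, since the paper's $P$/$Q$ induction never invokes \Cref{thm:hlevel1}, so \Cref{thm:hlevel} stands as an independent input. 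What each approach buys: yours cleanly modularizes the result as ``\Cref{thm:hlevel} implies \Cref{thm:hlevel1}'' by a direct induction with no auxiliary statement about pullbacks, and your treatment of the base via function extensionality, univalence, and closure of $(n-1)$-types under $\Sigma$ and $\Pi$ matches the paper's (compressed) use of \cite[Thm.~7.1.9]{HTT}; but your proof cannot replace the paper's joint induction, since establishing \Cref{thm:hlevel} itself still requires a $Q$-style strengthening --- the paper's argument is not made redundant, only reorganized.
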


\begin{proof}[Proof of \Cref{thm:hlevel,thm:hlevel1}]
Define the following types.
\begin{align*}
P(n) &\define \prd{\L: \Sig(n+1)} \prd{M: \uStruc{\L}} \prd{K: \bottom{\L}} \istype{\nminusone}(\bottom{M}(K)) \\
Q(n) &\define \prd{\substack{\M,\N:\Sig(n) \\ \alpha: \hom(\M  ,\N)}} \prd{N: \uStruc{\N}} \istype{(n-2)} (\alpha^* N = \alpha^* N)
\end{align*}
The type $P(n)$ is the statement of \Cref{thm:hlevel}, and the type $Q(n)$ implies the statement of \Cref{thm:hlevel1} by \cite[Thm.~7.2.7]{HTT}.
We prove $P(n)$ and $Q(n)$ simultaneously.

For $P(n)$, we need to show that $a = _{\bottom{M} K} b$ is an $(n-2)$-type for all $\L: \Sig(n+1), M: \uStruc{\L}, K: \bottom{\L}, a,b: {\bottom{M} K}$. But since $M$ is univalent, this type is equivalent to
\[ (a \fiso b) \equiv \sum_{e: \partial_a M = \partial_b M} \epsilon_a^{-1} \concat (\derivcat{\L}{\iota_M})^*p \concat \epsilon_b =_{M' = M'} \refl_{M'}.\] Thus, it will suffice to show that $\partial_a M = \partial_b M$ and $\epsilon_a^{-1} \concat (\derivcat{\L}{\iota_M})^*p \concat \epsilon_b =_{M' = M'} \refl_{M'}$ are $(n-2)$-types.

To show $P(0)$ and $Q(0)$ consider $\L: \Sig(1), M: \uStruc{\L}, K: \bottom{\L}, a,b: {\bottom{M} K}, \M,\N:\Sig(0), \alpha: \hom(\M  ,\N), N: \uStruc{\N}$.
We have that $\derivdia{M}, \partial_a M,\partial_b M, \alpha^* N: \onetype$ so the types $\partial_a M = \partial_b M$, $\epsilon_a^{-1} \concat (\derivcat{\L}{\iota_M})^*p \concat \epsilon_b =_{M' = M'} \refl_{M'}$, and $ \alpha^* N = \alpha^* N$  are contractible. Thus, $P(0)$ and $Q(0)$ hold.

Suppose that $P(n)$ and $Q(n)$ hold. We first show $Q(n+1)$. Consider $\M,\N:\Sig(n+1), \alpha: \hom(\M  ,\N), N: \uStruc{\N}$. We have that 
\begin{align*} (\alpha^* N=\alpha^* N) 
&\simeq \Sigma_{e: \bottom{ (\alpha^* N)} = \bottom{ (\alpha^* N)}} (\alpha^*N) ' =  e_*(\alpha^* N)' \\
&\equiv \Sigma_{e: (\bottom{N} \circ \bottom{\alpha}) = (\bottom{N} \circ \bottom{\alpha})} (\derivcat{\alpha}{\bottom{N}})^* \derivdia{N} =  e_*(\derivcat{\alpha}{\bottom{N}})^* \derivdia{N}.
\end{align*}
Our inductive hypothesis $Q(n)$ ensures that $(\derivcat{\alpha}{\bottom{N}})^* \derivdia{N} = (\derivcat{\alpha}{\bottom{N}})^* \derivdia{N}$ is an $(n-2)$-type, and hence $(\derivcat{\alpha}{\bottom{N}})^* \derivdia{N} =  e_*(\derivcat{\alpha}{\bottom{N}})^* \derivdia{N}$  is an $(n-1)$-type by \cite[Thm.~7.2.7]{HTT}. 
It remains to show that $(\bottom{N} \circ \bottom{\alpha}) = (\bottom{N} \circ \bottom{\alpha})$ is an $(n-1)$-type. 
Note that $N$ is a univalent structure of an $(n+1)$-signature, and our inductive hypothesis $P(n)$ then implies that for all $K: \bottom{\N}$, the type $ \bottom{N}(K)$ is an $(n-1)$-type.
Then since $(\bottom{N} \circ \bottom{\alpha})$ is a function which takes values in $(n-1)$-types, we can conclude that $(\bottom{N} \circ \bottom{\alpha}) = (\bottom{N} \circ \bottom{\alpha})$ is an $(n-1)$-type \cite[Thm.~7.1.9]{HTT}. Thus, $Q(n+1)$ holds.

To show that $P(n+1)$ holds, consider $\L: \Sig(n+2), M: \uStruc{\L}, K: \bottom{\L}, a,b: {\bottom{M} K}$.
By \cite[Thm.~7.2.7]{HTT}, $Q(n+1)$ implies that $\partial_a M = \partial_b M$ and $\epsilon_a^{-1} \concat (\derivcat{\L}{\iota_M})^*p \concat \epsilon_b =_{M' = M'} \refl_{M'}$ are $(n-2)$-types. Therefore, $P(n+1)$ holds.
\end{proof}

\begin{example}
 For the signature $\LcatE$ of height $3$, \Cref{thm:hlevel} states that the type of objects of a univalent $\LcatE$-structure, and hence also of a univalent FOLDS-category, is a $1$-type. \Cref{thm:hlevel1} states that the type of univalent $\LcatE$-structures, and hence also the type of univalent FOLDS-categories (as a subtype of the former), is a $2$-type.
\end{example}

\section{Equivalence of Structures and \shortorlong{}{the }Higher SIP}
\label{sec:hsip}

Our Higher Structure Identity Principle (\shortorlong{\Cref{thm:hsip}}{\Cref{thm:hsip2}}) requires a notion of equivalence \emph{of} structures that is \textit{a priori} weaker than \Cref{def:iso-of-structures}.
\shortorlong{Makkai defined a morphism of structures $\mu:M\to N$ to be \emph{very surjective}\footnote{Very surjective morphisms are also known as ``Reedy surjections'' and ``trivial fibrations''.} if it is ``locally'' surjective at all sorts, which in our notation means that $\bottom{\mu}(K)$ is surjective for all $K$ and $\derivdia{\mu}$ is (inductively) very surjective.
For instance, a morphism between FOLDS-categories is very surjective if (1) it is surjective on objects, (2) it is \emph{full}, i.e., each map $MA(x,y) \to NA(fx,fy)$ is surjective, and (3) it reflects the relations $T,I,E$. %, e.g.\ if $T(fp,fq,fr) \to T(p,q,r)$ and so on.
When equality is standard, (3) is equivalent to faithfulness, so this is just a surjective weak equivalence of categories.
With this motivation, we use similar terminology for structures over arbitrary signatures.}%
{In the case of (pre)categories, there are two natural candidates for such a notion:
\begin{itemize}
\item A \emph{weak equivalence} is a functor $f:M\to N$ that is fully faithful (each function $MA(x,y) \to NA(fx,fy)$ is an isomorphism of sets) and essentially surjective ($\prd{y:NO} \Vert \sm{x:MO} (fx \cong y) \Vert$).
\item A \emph{(strong) equivalence} is a functor $f:M\to N$ for which there is a functor $g:N\to M$ and natural isomorphisms $f g \cong 1_N$ and $g f \cong 1_M$.
  By~\cite[Lemma 6.6]{AKS13}, this is equivalent to being fully faithful and \emph{split} essentially surjective ($\prd{y:NO} \sm{x:MO} (fx \cong y)$).
\end{itemize}
In addition, there are two important related auxiliary notions:
\begin{itemize}
\item A \emph{surjective weak equivalence} is a functor $f:M\to N$ that is fully faithful (each function $MA(x,y) \to NA(fx,fy)$ is an isomorphism of sets) and surjective on objects ($\prd{y:NO} \Vert \sm{x:MO} (fx=y) \Vert$).
\item A \emph{split-surjective equivalence} is a functor $f:M\to N$ that is fully faithful (each function $MA(x,y) \to NA(fx,fy)$ is an isomorphism of sets) and split-surjective on objects ($\prd{y:NO} \sm{x:MO} (fx=y)$).
  % This is equivalent to having a functor $g:N\to M$ with a natural isomorphism $g f \cong 1_M$ and an equality $f g = 1_N$.
\end{itemize}
Note that the latter two do not require knowing what an isomorphism between objects is.
Furthermore, fully-faithfulness can be split into fullness (each function $MA(x,y) \to NA(fx,fy)$ is surjective\footnote{Or split-surjective; in the presence of faithfulness the two are equivalent.}) and faithfulness (each function $MA(x,y) \to NA(fx,fy)$ is injective), while faithfulness is equivalent to \emph{surjectivity on equalities}: each function $ME_{x,y}(p,q) \to NE_{fx,fy}(fp,fq)$ is surjective (which implies a similar property for $T$ and $I)$.
This suggests the following generalizations that apply to structures over any signature.}

\begin{definition}[\defemph{Split-surjective equivalence}]
  Suppose $f:\hom_{\Struc{\L}}(M,N)$, where $M,N: \Struc{\L}$ and $\L: \Sig(n)$.
  If $n\eqdef 0$, then $f$ is a split-surjective equivalence.
  For $n>0$, $f$ is a split-surjective equivalence if
  \begin{enumerate}
  \item $\bottom{f}(K)$ is a split surjection for every $K:\bottom{\L}$, and
  \item $\derivdia{f}$ is a split-surjective equivalence.
  \end{enumerate}
  \defemph{Surjective weak equivalences} are defined similarly, but only requiring each $\bottom{f}(K)$ to be surjective.
\end{definition}

\shortorlong{}{Makkai defined surjective weak equivalences under the name \emph{very surjective morphisms}; other names for them include \emph{Reedy surjections} and \emph{trivial fibrations}.}
Unfortunately, we are currently unable to prove our desired general result with surjective weak equivalences, so for the present we restrict to the split-surjective equivalences.
We write $\VSS(f)$ for the type ``$f$ is a split-surjective equivalence'', which in the inductive case is
\[ \VSS(f) \define \left(
    \shortorlong{\prd{K: \bottom{\L}} \SplitS{\bottom{f}K }}{ \prd{K: \bottom{\L}}{y:\bottom{N}(K)} \sm{x:\bottom{M}(K)} (\bottom{f}(K)(x) = y) }
  \right) \times \VSS(\derivdia{f}),\]  
and $(M \twoheadrightarrow N) \eqdef \sm{f : \hom_{\Struc{\L}}(M,N)}\VSS(f)$ for the type of split-surjective equivalences.

% \begin{definition}\label{def:vss}
% Consider $\L: \Sig(n)$, $S,T: \Struc{L}$. We define the predicate $\VSS : \hom_{\Struc{\L}}(S,T) \to \U$ of a morphism being \defemph{very split-surjective}.
% If $n \eqdef 0$ and $\mu:  \hom_{\Struc{\L}}(S,T)$, we set $\VSS(\mu) \eqdef \onetype$.
% For $n > 0$, we define $\mu : \hom_{\Struc{\L}}(S,T)$ to be very split-surjective if the function $\bottom{\mu}(K)$ is split-surjective for every $K: \bottom{\L}$, and if $\derivdia{\mu}$ is very split-surjective, in symbols,

% Let $S \twoheadrightarrow T \eqdef \sm{\mu : \hom_{\Struc{\L}}(S,T)}\VSS(f)$ denote the type of very split-surjective morphisms from $S$ to $T$.
% \end{definition}

\begin{lemma}
  % Consider $\L: \Sig(n)$, $S,T: \Struc{L}$.
  The type $M \twoheadrightarrow N$ of split-surjective equivalences is fibrant.\qed
\end{lemma}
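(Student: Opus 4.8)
The plan is to peel off the existential witness and reduce to a fibrancy statement about the predicate $\VSS$ alone. Since $(M \twoheadrightarrow N) \equiv \sm{f : \hom_{\Struc{\L}}(M,N)} \VSS(f)$ and we already know that $\hom_{\Struc{\L}}(M,N)$ is fibrant, the closure of fibrant types under $\Sigma$ (a consequence of axiom (T3), \cite[Lemma~3.5]{2LTT}) reduces the claim to showing that $\VSS(f)$ is fibrant for every morphism $f$. This mirrors exactly the situation for $\isIso_\L(f)$ treated in \Cref{lem:isisoprop}, with $\SplitS{-}$ playing the role of $\isEquiv(-)$.

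First I would prove, by induction on $n$, the auxiliary statement that for every $\L : \Sig(n)$, every $M,N : \Struc{\L}$, and every $f : \hom_{\Struc{\L}}(M,N)$, the type $\VSS(f)$ is fibrant. When $n \equiv 0$ we have $\VSS(f) \equiv \onetype$, which is fibrant. For the inductive step, recall that
\[ \VSS(f) \equiv \left( \prd{K : \bottom{\L}} \SplitS{\bottom{f}K} \right) \times \VSS(\derivdia{f}), \]
so by closure under $\times$ (a special case of $\Sigma$) it suffices to show each factor is fibrant.

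For the second factor this is immediate: $\derivdia{f}$ is a morphism of structures over $\derivcat{\L}{\bottom{M}} : \Sig(n-1)$, so $\VSS(\derivdia{f})$ is fibrant by the inductive hypothesis. For the first factor, fix $K : \bottom{\L}$. The types $\bottom{M}(K)$ and $\bottom{N}(K)$ are inner, so the type of candidate sections $s$ is the fibrant function type $\bottom{N}(K) \to \bottom{M}(K)$, and the condition $\bottom{f}(K) \circ s = 1$ is an identity type in the fibrant type $\bottom{N}(K) \to \bottom{N}(K)$; hence $\SplitS{\bottom{f}K}$ is fibrant by closure under $\Sigma$. It then remains to pass from this fibrewise statement to the dependent product over $K$.

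This last step is the only one needing genuine care: forming $\prd{K : \bottom{\L}} \SplitS{\bottom{f}K}$ preserves fibrancy only because $\bottom{\L}$ is itself a fibrant type---which is built into the definition of an abstract signature---so that closure of fibrant types under $\Pi$ applies with a fibrant index and fibrant fibres. Were $\bottom{\L}$ merely an outer type this would fail, so this is precisely where the fibrancy requirement on the base of a signature earns its keep. With both factors fibrant the induction closes, and combining it with fibrancy of $\hom_{\Struc{\L}}(M,N)$ and closure under $\Sigma$ yields the lemma.
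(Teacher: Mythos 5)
Your proof is correct and is exactly the routine argument the paper has in mind: the paper omits the proof (stating the lemma with a \qed), relying on the already-established fibrancy of $\hom_{\Struc{\L}}(M,N)$ and the closure of fibrant types under $\Pi$ and $\Sigma$ from \cite[Lemma~3.5]{2LTT}, combined with induction on the height of the signature. Your observation that fibrancy of $\bottom{\L}$ (built into \cref{def:abstract_signature}) is what licenses the $\Pi$-step is the one point of genuine content, and it is correct.
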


% An isomorphism of structures is very split-surjective:
\begin{definition}[\defemph{From isomorphisms to split-surjective equivalences}]\label{def:isotovss}
 % Let $\L : \Sig(n)$ be a signature and $M, N : \Struc{\L}$ be two $\L$-structures.
  Let $f : \hom_{\Struc{\L}}(M, N)$; we define $U_f: \isIso(f) \to \VSS(f)$ by induction on $n$.
  If $n \eqdef 0$, $U_f$ is the identity function on $\onetype$.
  For $n>0$, we use that any equivalence of types is a split surjection, and the inductive hypothesis.
% \begin{align*}\isIso(f) &\converts  \left(\prd{K : \bottom{\L}}\isEquiv(\bottom{f}(K))\right) \times \isIso(\derivdia{f}), 
% \\
% \VSS(f) &\converts  \left(\prd{K : \bottom{\L}}\SplitS{\bottom{f}(K)}\right)  \times \VSS(\derivdia{f}),
% \end{align*}
% so we let $U_f \eqdef \left( \lambda K. V_{\bottom{f}(K)} , U_{f'}\right) $ where 
% \[V_{\bottom{f}(K)} : \isEquiv(\bottom{f}(K)) \to \tSplitS{\bottom{f}(K)}\] is the evident forgetful function.
  Let $\isotovss_{M,N} \eqdef (1,U) : (M \cong N) \to (M \twoheadrightarrow N)$. % with $U$ defined as above.
\end{definition}

\begin{definition}[\defemph{From identifications to split-surjective equivalences}]
  For $\L: \Sig(n)$ and $M,N: \Struc{\L}$ we define 
  \[\idtovss \eqdef \isotovss \circ \idtoiso : (M = N) \to (M \strucequiv N).\]
\end{definition}

Our \shortorlong{}{first }HSIP states that if $M$ is univalent, then $\idtovss_{M,N}$ is an equivalence.
It uses the following lemma.

 \begin{lemma} \label{lem:injwrtiso}
Let $\L: \Sig(n+1)$, $M, N : \Struc{\L}$, $\bottom{f}: \bottom{M} \to \bottom{N}$, and $e: \derivdia{M} = (\derivcat{\L}{\bottom{f}})^* N'$. Then for $x,y: \bottom{M}(K)$, an indiscernibility $\bottom{f}x \fiso \bottom{f} y$ produces an indiscernibility $x \fiso y$.
\end{lemma}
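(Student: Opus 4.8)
The plan is to construct the required function $(\bottom{f}x \fiso \bottom{f}y) \to (x \fiso y)$ by transporting the defining data of an indiscernibility in $N$ across $(\bottom{f},e)$, using the equivalent description from \Cref{lem:iso-equivalent}, where the coherence of an indiscernibility $p \colon \partial_a M = \partial_b M$ reads $(\derivcat{\L}{\iota_{\bottom{M}}})^*p = \epsilon_a \concat \epsilon_b^{-1}$. The geometric input is that the joker substitution is natural in $\bottom{f}$. Writing $\bottom{f}+[K] : \bottom{M}+[K] \to \bottom{N}+[K]$ for the pointwise coproduct of $\bottom{f}$ with the identity on $[K]$, one checks by computing on the two summands the (strict, in fact judgmental) squares
\[ \bottom{f}\circ\copair{1_{\bottom{M}}}{x} \steq \copair{1_{\bottom{N}}}{\bottom{f}x}\circ(\bottom{f}+[K]), \qquad (\bottom{f}+[K])\circ\iota_{\bottom{M}} \steq \iota_{\bottom{N}}\circ\bottom{f}. \]
Applying the functor $\derivcat{\L}{}$ and then pullback contravariantly, and abbreviating $g \eqdef (\derivcat{\L}{\bottom{f}+[K]})$, $F_x \eqdef (\derivcat{\L}{\copair{1_{\bottom{M}}}{x}})^*$, $R^M \eqdef (\derivcat{\L}{\iota_{\bottom{M}}})^*$, and $R^N \eqdef (\derivcat{\L}{\iota_{\bottom{N}}})^*$, these squares give the strict identities $g^*\circ(\derivcat{\L}{\copair{1_{\bottom{N}}}{\bottom{f}x}})^* = F_x\circ(\derivcat{\L}{\bottom{f}})^*$ and $R^M\circ g^* = (\derivcat{\L}{\bottom{f}})^*\circ R^N$, alongside the identity $R^M\circ F_x = \mathsf{id}$ already implicit in the definition of $\epsilon$.

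To avoid mixing the genuine identification $e$ with these strict coherences, I would first generalize the statement, replacing $M'$ by an arbitrary $P : \Struc{\derivcat{\L}{\bottom{M}}}$ equipped with $e : P = (\derivcat{\L}{\bottom{f}})^*N'$, and proceed by path induction on $e$. This reduces to $P \define (\derivcat{\L}{\bottom{f}})^*N'$ and $e \define \refl$. In this case the first square gives a strict equality $\theta_x : \partial_x M \steq g^*(\partial_{\bottom{f}x}N)$, obtained by evaluating $F_x\circ(\derivcat{\L}{\bottom{f}})^* = g^*\circ(\derivcat{\L}{\copair{1_{\bottom{N}}}{\bottom{f}x}})^*$ at $N'$, and likewise $\theta_y$. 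Given an indiscernibility $(p_N, c_N) : \bottom{f}x \fiso \bottom{f}y$, I set the underlying identification of $x\fiso y$ to be the conjugate $p_M \define \theta_x \concat g^*(p_N) \concat \theta_y^{-1} : \partial_x M = \partial_y M$.

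It remains to verify the coherence $R^M(p_M) = \epsilon^M_x \concat (\epsilon^M_y)^{-1}$, where I write $\epsilon^M$ and $\epsilon^N$ for the canonical paths of \Cref{def:iso-within-a-structure} attached to $M$ and $N$. Distributing $R^M$ over the concatenation and applying $R^M\circ g^* = (\derivcat{\L}{\bottom{f}})^*\circ R^N$ to the middle factor rewrites $R^M(g^*p_N)$ as $(\derivcat{\L}{\bottom{f}})^*(R^N p_N)$, which by the coherence $c_N$ equals $(\derivcat{\L}{\bottom{f}})^*\big(\epsilon^N_{\bottom{f}x}\concat(\epsilon^N_{\bottom{f}y})^{-1}\big)$. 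Since $(\derivcat{\L}{\bottom{f}})^*$ commutes with concatenation and inversion, the whole identity then follows from the two factorwise claims $\epsilon^M_x = R^M(\theta_x)\concat(\derivcat{\L}{\bottom{f}})^*\epsilon^N_{\bottom{f}x}$ and its analogue for $y$ (the inner $(\derivcat{\L}{\bottom{f}})^*N' = (\derivcat{\L}{\bottom{f}})^*N'$ endpoints cancel in pairs).

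The main obstacle is exactly this last pair of equations, and it is the only genuinely delicate point. Its resolution is that each of $\epsilon^M_x$, $\epsilon^N_{\bottom{f}x}$, and $\theta_x$ is, by construction, a concatenation of coercions of strict equalities coming from the functoriality of $\derivcat{\L}{}$ and of pullback; hence each side of the equation is itself a coerced strict equality, and both sides run between the \emph{same} strictly equal endpoints $R^M(\partial_x M)$ and $(\derivcat{\L}{\bottom{f}})^*N'$. By UIP at the outer level any two such strict equalities are strictly equal, and the coercion $(-\steq-)\to(-=-)$ carries strictly equal proofs to equal identifications; this establishes both equations and completes the construction of the map. The point of the path induction on $e$ is precisely that it eliminates $e$ from these equations, turning what would be a homotopical path-algebra identity involving a nontrivial conjugation by $e$ into a statement provable by outer-level UIP.
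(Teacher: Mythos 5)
Your setup---generalizing $\derivdia{M}$ to an arbitrary $P$ with $e : P = (\derivcat{\L}{\bottom{f}})^*N'$ and doing path induction on $e$---and your construction of the underlying identification $p_M \define \theta_x \concat g^*(p_N) \concat \theta_y^{-1}$ agree with the paper's proof, which builds exactly this conjugate (there called $k$, with your $\theta$'s playing the role of its $\beta$'s). The gap is your parenthetical claim that the substitution square
\[
  \bottom{f}\circ\copair{1_{\bottom{M}}}{x} \steq \copair{1_{\bottom{N}}}{\bottom{f}x}\circ(\bottom{f}+[K])
\]
is ``strict, in fact judgmental'', and everything downstream of it. The other square, $(\bottom{f}+[K])\circ\iota_{\bottom{M}} \steq \iota_{\bottom{N}}\circ\bottom{f}$, is indeed strict (it is the paper's $\alpha$), but this one is not: on the summand $[K] \converts \lambda w.\,(w=K)$, the map $\copair{1_{\bottom{M}}}{x}$ restricts to the function of \cref{def:ind} determined by $x$, which is defined by path induction on the \emph{inner} identity type and computes only at $\refl_K$. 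For a general $p : w = K$, the left-hand side is $\bottom{f}(w)$ applied to the (stuck) value of the function induced by $x$, while the right-hand side is the (stuck) value of the function induced by $\bottom{f}(K)(x)$; these can be identified only by a further inner path induction, which produces an identification, not a strict equality. Indeed, a strict equality here is not even attainable in principle, since the inner identity type eliminates only into fibrant types and strict identity types are not fibrant. This is precisely why the paper's first diagram marks this single cell with $=$ while the other three carry $\steq$, and why the induced comparison $\beta_x$ is a genuine identification.

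Consequently your $\theta_x$ is an identification rather than a coerced strict equality, and your final step collapses: the factorwise equation $\epsilon^M_x = R^M(\theta_x)\concat(\derivcat{\L}{\bottom{f}})^*\epsilon^N_{\bottom{f}x}$ is not a comparison of images of strict equalities, because its left-hand factor $R^M(\theta_x)$ is exactly the non-strict datum, so outer-level UIP cannot be applied to it. Establishing this equation is the real content of the lemma: one must show that the naturality identification $\theta_x$ becomes trivial upon restriction along $\iota_{\bottom{M}}$, intuitively because its only non-$\refl$ component lives over the $[K]$-summand, which $\iota_{\bottom{M}}$ misses. The paper packages this as the assertion that the square diagram ``commutes 2-dimensionally''---the pasting of all four cells, including the non-strict $\beta_x$, is strictly equal to the judgmental equality $\bottom{f}\circ 1_{\bottom{M}} \steq 1_{\bottom{N}}\circ\bottom{f}$---and then combines these 2-cells (labelled $\sigma$) with a naturality 2-cell ($\nu$) in the pasting diagram of \Cref{diag:biglemma} to obtain the coherence. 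Your argument can be repaired by replacing the UIP appeal with such a 2-dimensional computation (e.g.\ by case analysis on the coproduct, checking that $\beta_x$ restricts to $\refl$ on the $\bottom{M}$-summand); as written, the crucial coherence is asserted on false grounds.
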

\begin{proof}
By path induction on $e$, we may assume $M' \converts (\derivcat{\L}{\bottom{f}})^* N'$.

Consider the following diagram whose cells commute up to $\steq$ or $=$, as pictured.
%\[ \begin{tikzcd}
%\bottom{M} \ar[d,'\bottom{f}'] \ar[r,'\iota_{\bottom{M}}'] & \bottom{M} + [K]  \ar[d,'\bottom{f}+1'] \ar[rr,'{\copair 1 x}'] && \bottom{M} \ar[d]^{\bottom{f}}\\
%\bottom{N}  \ar[r]^-{\iota_{\bottom{N}}}  & \bottom{N} + [K] \ar[rr]_-{\copair{1}{\bottom{f}(K)x}}&& \bottom{N}
%\end{tikzcd}
%\]
\begin{equation}
\begin{tikzcd}[column sep=huge]
\bottom{M} \ar[dr,phantom,"\steq" description]  \ar[rr,phantom,"\steq", bend left=15]  \ar[d,"\bottom{f}"] \ar[r,"\iota_{\bottom{M}}"] \ar[rr, bend left=20, shift left=1,"1_{\bottom{M}}",near start] & \bottom{M} + [K] \ar[d,"\bottom{f}+1"] \ar[r,
"{\copair 1 x}"] & \bottom{M} \ar[d, "\bottom{f}"]\\
\bottom{N}  \ar[rr,phantom,"\steq", bend right=15] \ar[r,"{\iota_{\bottom{N}}}"] \ar[rr, bend right=20, shift right=1, "1_{\bottom{N}}"', near start]
 & \bottom{N} + [K] \ar[ur,phantom,"=" description]  \ar[r,"\copair{1}{\bottom{f}(K)x}"] & \bottom{N} 
\end{tikzcd} \label{diag:lemma}
\end{equation}
This diagram commutes 2-dimensionally, which is to say that the ``pasting'' of all four displayed identities is strictly equal to the strict (indeed, judgemental) equality $\bottom{f}\circ 1_{\bottom{M}} \steq 1_{\bottom{N}} \circ \bottom{f}$.
Applying the composite s-functor $\Struc{\derivcat{\L}{-}}$, we obtain:
\begin{equation}
\begin{tikzcd}[column sep=large]
\Struc{\derivcat{\L}{\bottom{M}}} \ar[dr,phantom,"\steq \scriptstyle(\alpha)" description]  \ar[rr,phantom,"\steq\scriptstyle(\epsilon_x)", bend left=15]  \ar[d,<-,"(\derivcat{\L}{\bottom{f}})^*"] \ar[r,<-,"(\derivcat{\L}{\iota_{\bottom{M}}})^*"] \ar[rr,<-, bend left=20, shift left=1,"1_{\Struc{\derivcat{\L}{\bottom{M}}}}",near start] & \Struc{\derivcat{\L}{\bottom{M} + [K]}} \ar[d,<-,"{(\derivcat{\L}{\bottom{f}+1})^*}"] \ar[r,<-,"(\derivcat{\L}{{\copair 1 x}})^*"] & \Struc{\derivcat{\L}{\bottom{M}}} \ar[d,<-, "(\derivcat{\L}{\bottom{f}})^*"]\\
\Struc{\derivcat{\L}{\bottom{N}}}  \ar[rr,phantom,"\steq\scriptstyle (\epsilon_{\bottom{f}x})", bend right=15] \ar[r,<-,"(\derivcat{\L}{\iota_{\bottom{N}}})^*"'] \ar[rr, <-,bend right=20, shift right=1, "1_{\Struc{\derivcat{\L}{\bottom{N}}}}"', near start]
 & \Struc{\derivcat{\L}{\bottom{N} + [K]}} \ar[ur,phantom,"\mathrlap{=\scriptstyle(\beta_x)}" description]  \ar[r,<-,"(\derivcat{\L}{\copair{1}{\bottom{f}x}})^*"'] & \Struc{\derivcat{\L}{\bottom{N}}}
\end{tikzcd} \label{diag:lemma2}
\end{equation}
which commutes in the same way.
Moreover, the upper and lower strict equalities in this diagram are $\epsilon_x$ and $\epsilon_{\bottom{f}x}$ respectively; we call the others $\alpha$ and $\beta_x$. 
%Let $\gamma_x$ denote the strict identity \[\gamma_x: (\derivcat{\L}{\iota_{\bottom{M}} })^* (\derivcat{\L}{\bottom{f} +1 })^*(\derivcat{\L}{\copair{1}{\bottom{f}x}})^*  \steq (\derivcat{\L}{\iota_{\bottom{M}} })^*(\derivcat{\L}{\copair{1}{x} })^*(\derivcat{\L}{\bottom{f} })^*,\]
%and let $\delta_x$ denote the homotopical identity
%\[\delta_x: (\derivcat{\L}{\iota_{\bottom{M}} })^* \circ \beta_x = \gamma_x\]
%that results from the $2$-dimensional commutation of Diag. (\ref{diag:lemma2}).

We have an analogous diagram for $y$, in which the left-hand square $\alpha$ is the same.

Then since $ \partial_{\bottom{f}(K) x} N \converts (\derivcat{\L}{\copair{1}{\bottom{f}(K)x}})^* \derivdia{N}$,  $M' \converts (\derivcat{\L}{\bottom{f}})^* N'$, and $\partial_{x} M \converts (\derivcat{\L}{\copair1x })^* \derivdia{M}$, we have an identification
\begin{equation*}
\beta_x N : (\derivcat{\L}{\bottom{f} +1 })^*\partial_{\bottom{f}(K) x} N = \partial_x M. 
\end{equation*}
The same can be shown for $y$.

Consider an indiscernibility $\bottom{f}x \fiso \bottom{f} y$ which consists, by \Cref{lem:iso-equivalent}, of (1) an identification $i : \partial_{\bottom{f}x} N = \partial_{\bottom{f}y} N$ and (2) an identification 
$ j $ between $(\derivcat{\L}{\iota_{\bottom{N}}})^*i$ and the concatenation
\[(\derivcat{\L}{\iota_{\bottom{N}}})^*(\derivcat{\L}{\copair{1}{\bottom{f}x}})^*N' \overset{\epsilon_{\bottom{f}x}}{=}
  N' \overset{\epsilon_{\bottom{f}y}^{-1}}{=}
  (\derivcat{\L}{\iota_{\bottom{N}}})^*(\derivcat{\L}{\copair{1}{\bottom{f}y}})^*N'
\]
(which is strict, though $i$ is not).

 We need to construct an indiscernibility $x \fiso y$ which consists of (1) an identification $k : \partial_{x} M = \partial_{y} M$ and (2) an identification $(\derivcat{\L}{\iota_{\bottom{M}}})^*k  = \epsilon_{ x}^{} \concat \epsilon_{ y}^{-1}$.

The first component, $k$, of our desired indiscernibility $x\fiso y$ is the following concatenation:
\begin{align*}
  (\derivcat{\L}{\copair{1}{x}})^* (\derivcat{\L}{\bottom{f}})^* N'
  &\overset{\beta_x}{=} (\derivcat{\L}{\bottom{f}+1})^* (\derivcat{\L}{\copair{1}{\bottom{f}x}})^* N'\\
  &\overset{\mathllap{(\derivcat{\L}{\bottom{f}+1})^*}i}{=} (\derivcat{\L}{\bottom{f}+1})^* (\derivcat{\L}{\copair{1}{\bottom{f}y}})^* N'\\
  &\overset{\beta_y^{-1}}{=} (\derivcat{\L}{\copair{1}{y}})^* (\derivcat{\L}{\bottom{f}})^* N'
\end{align*}

Now we need $ (\derivcat{\L}{\iota_{\bottom{M}}})^*k  =  \epsilon_{ x}^{} \concat \epsilon_{ y}^{-1}$. 
Consider the commutative diagram in \Cref{diag:biglemma} where straight lines denote strict equalities, squiggly lines denote identifications, and double (squiggly) lines denote identifications between identifications. 
The 2-dimensional identification labeled $\nu$ arises from naturality, while those labeled $\sigma$ arise from the 2-di\-men\-sio\-nal commutativity of Diag.\ (\ref{diag:lemma2}).
The concatenation of the three top horizontal identifications in \Cref{diag:biglemma} is $ (\derivcat{\L}{\iota_{\bottom{M}}})^*k$.
Thus, \Cref{diag:biglemma} exhibits an identification of this with $\epsilon_{x}^{} \concat \epsilon_{y}^{-1}$.
%Thus, we find that $ (\derivcat{\L}{\iota_{\bottom{M}}})^*k  = \epsilon_{x} \concat \epsilon^{-1}_{y}$, and we have constructed an isomorphism $x \cong_K^M y$.
%\qedhere
\end{proof}
 
\begin{figure*} \begin{tikzcd}[column sep=large]
(\derivcat{\L}{\iota_{\bottom{M}}})^* \partial_x M  
\ar[ddrr,bend right,"\epsilon_x^{}"',no head,""{name=E}]
\ar[r,"(\derivcat{\L}{\iota_{\bottom{M}}})^*\beta_x ",squiggly, no head] 
\ar[r,phantom,""{above,name=A},shift left=4] 
\ar[r,phantom,""{below, name=B},shift right=4]
& (\derivcat{\L}{\iota_{\bottom{M}}})^* (\derivcat{\L}{\bottom{f} +1 })^* \partial_{\bottom{f}x} N
\ar[rr,"(\derivcat{\L}{\iota_{\bottom{M}}})^*(\derivcat{\L}{\bottom{f} +1 })^*i ",""{below,name=C},squiggly, no head]
\ar[d, no head,"\alpha "]
&& (\derivcat{\L}{\iota_{\bottom{M}}})^*(\derivcat{\L}{\bottom{f} +1 })^*\partial_{\bottom{f}y} N
\ar[r,"(\derivcat{\L}{\iota_{\bottom{M}}})^*(\beta_y )^{-1}",squiggly, no head]
\ar[r,phantom,""{above,name=A},shift left=4] 
\ar[r,phantom,""{below, name=B},shift right=4]
 \ar[d, no head,"\alpha "]
& (\derivcat{\L}{\iota_{\bottom{M}}})^* \partial_y M 
\\
&|[alias=X]|  (\derivcat{\L}{{\bottom{f}}})^*(\derivcat{\L}{\iota_{\bottom{N}}})^* \partial_{\bottom{f}x} N 
\ar[rr, "(\derivcat{\L}{{\bottom{f}}})^*(\derivcat{\L}{\iota_{\bottom{N}}})^*i"{name=D},""{below,name=DD}, squiggly, no head]
\ar[from=C, to=D,squiggly,shift left=0.5,no head,"\nu"]
\ar[from=C, to=D,squiggly,shift right=0.5, no head]
\ar[rd, "(\derivcat{\L}{{\bottom{f}}})^*( \epsilon_{\bottom{f}x}^{}  )"{description}, no head, bend right=10]
\ar[from=DD, to=Z,squiggly,shift right=0.5,no head]
\ar[from=DD, to=Z,squiggly,shift left=0.5, no head,"(\derivcat{\L}{{\bottom{f}}})^* j"]
&& |[alias=Y]|  (\derivcat{\L}{{\bottom{f}}})^*(\derivcat{\L}{\iota_{\bottom{N}}})^*\partial_{\bottom{f}y} N 
\\
&& |[alias=Z]| M \ar[uurr,bend right,"\epsilon_y^{-1}"',no head,""{name=F}]
\ar[ur, "(\derivcat{\L}{{\bottom{f}}})^*( \epsilon_{\bottom{f}y}^{-1}  )"{description}, no head, bend right=10]
\ar[from=E, to=X, equal,"\sigma"']
\ar[from=F, to=Y, equal,"\sigma"]
\end{tikzcd} 
\Description[]{Diagram for proof of lemma}
\caption{Diagram for proof of \Cref{lem:injwrtiso}}
\label{diag:biglemma}
\end{figure*}
%\begin{align*}
% (\derivcat{\L}{\iota_{\bottom{M}}})^*(\derivcat{\L}{\bottom{f} +1 })^*d & \steq  (\derivcat{\L}{{\bottom{f}}})^*(\derivcat{\L}{\iota_{\bottom{N}}})^*d & \text{from Diag. (\ref{diag:lemma})}\\
% &= (\derivcat{\L}{{\bottom{f}}})^* \epsilon_{\bottom{f} x,\bottom{f}y} & \text{from $p$.}
%\end{align*}
%This last term is the unique term in the following strict identity type:
%\begin{align*}
%(\derivcat{\L}{{\bottom{f}}})^*(\derivcat{\L}{\iota_{\bottom{N}}})^*(\derivcat{\L}{\copair{1}{\bottom{f}(K)x}})^* \derivcat{N}{} &\steq  (\derivcat{\L}{{\bottom{f}}})^* (\derivcat{\L}{\iota_{\bottom{N}}})^*(\derivcat{\L}{\copair{1}{\bottom{f}(K)y}})^* \derivcat{N}{} \\
%& \hspace{2em} \implies \text{ by ($*$)} \\
%(\derivcat{\L}{\iota_{\bottom{M}}})^*(\derivcat{\L}{\copair{1}{x}})^*(\derivcat{\L}{\bottom{f}})^* \derivcat{N}{} &\steq  (\derivcat{\L}{\iota_{\bottom{M}}})^* (\derivcat{\L}{\copair{1}{y}})^*(\derivcat{\L}{\bottom{f}})^* \derivcat{N}{}  \\
%& \hspace{2em} \implies \\
%(\derivcat{\L}{\iota_{\bottom{M}}})^*(\derivcat{\L}{\copair{1}{x}})^* \derivcat{M}{} &\steq  (\derivcat{\L}{\iota_{\bottom{M}}})^* (\derivcat{\L}{\copair{1}{y}})^* \derivcat{M}{}  \\
%& \hspace{2em} \implies \\
%(\derivcat{\L}{\iota_{\bottom{M}}})^*\partial_x M &\steq  (\derivcat{\L}{\iota_{\bottom{M}}})^* \partial_y M  \\
%\end{align*} 
%where this last strict identity is $\epsilon_{x,y}$.

\begin{theorem}[\defemph{Higher SIP\shortorlong{}{, split-surjective case}}]
\label{thm:hsip}
Consider $\L: \Sig(n)$ and $M,N: \Struc{\L}$ such that $M$ is univalent.
The morphism $\idtovss: (M = N) \to (M \twoheadrightarrow N)$ is an equivalence.
\end{theorem}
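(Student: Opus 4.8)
The plan is to reduce to the isomorphism case and then induct on the height $n$. Since $\idtovss \define \isotovss \circ \idtoiso$ and $\idtoiso_{M,N}$ is \emph{already} an equivalence by \Cref{prop:uni1} (which uses only the univalence axiom, not univalence of $M$), the 2-out-of-3 property reduces the claim to showing that $\isotovss : (M \cong N) \to (M \strucequiv N)$ is an equivalence whenever $M$ is univalent. Writing both sides as $\Sigma$-types over $f : \hom_{\Struc{\L}}(M,N)$, the map $\isotovss \define (1,U)$ is the identity on the underlying morphism, so it is an equivalence exactly when each fibre map $U_f : \isIso(f) \to \VSS(f)$ is an equivalence. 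I would prove this last statement by induction on $n$.

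The base case $n \eqdef 0$ is immediate, since $U_f$ is the identity on $\onetype$. For the inductive step, recall that $\isIso(f)$ is a proposition by \Cref{lem:isisoprop}; hence $U_f$ is an equivalence as soon as $\VSS(f)$ is a proposition and admits a map back to $\isIso(f)$. Both facts follow once I know that the derivative $\derivdia f$ is an isomorphism and that $\bottom f(K)$ is an equivalence for every $K$. The derivative is handled by the inductive hypothesis: $M$ being univalent forces $\derivdia M$ univalent, and $\derivdia f$ is a morphism in $\hom_{\Struc{\derivcat{\L}{\bottom M}}}(\derivdia M, (\derivcat{\L}{\bottom f})^*\derivdia N)$, so any element of $\VSS(\derivdia f)$ yields via the inductive equivalence an element of $\isIso(\derivdia f)$, which by \Cref{prop:uni1} produces an identification $e : \derivdia M = (\derivcat{\L}{\bottom f})^*\derivdia N$.

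The heart of the argument is to upgrade each split surjection $\bottom f(K)$ to an equivalence using $e$ and univalence of $M$ at $K$. Feeding $e$ into \Cref{lem:injwrtiso} gives, for all $x,y : \bottom M(K)$, a reflection map $(\bottom f x \fiso \bottom f y) \to (x \fiso y)$ from indiscernibilities in $N$ to indiscernibilities in $M$. A split surjection supplies a section $s$ with $\bottom f(K)\,(s\,z) = z$ for all $z$; instantiating at $z \define \bottom f(K)\,x$ gives an identification $\bottom f(K)(s\,\bottom f(K)\,x) = \bottom f(K)\,x$, which I transport through $\idtoiso$ in $N$, the reflection map of \Cref{lem:injwrtiso}, and the inverse of the univalence equivalence $(x = y) \simeq (x \fiso y)$ for $M$ to obtain $s\,\bottom f(K)\,x = x$. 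Thus the chosen section is also a retraction, so $\bottom f(K)$ is bi-invertible and hence an equivalence. This provides the backward map $\VSS(f) \to \isIso(f)$; moreover, an embedding $\bottom f(K)$ has a propositional type of sections, so $\prod_{K}\SSurj(\bottom f(K))$ is a proposition, and together with the inductive propositionality of $\VSS(\derivdia f)$ this shows $\VSS(f)$ is a proposition, completing the step.

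I expect the main obstacle to be precisely this upgrade from split surjection to equivalence: the naive fibrewise comparison $\prod_{K}\isEquiv(\bottom f K) \to \prod_{K}\SSurj(\bottom f K)$ is \emph{not} itself an equivalence, and the non-univalence of $N$ blocks any direct appeal to $\idtoiso$ in $N$ being injective. The key insight is that the section coming from split surjectivity must be exhibited as a two-sided inverse, and the only available route to its left-inverse property runs through the reflection of indiscernibilities (\Cref{lem:injwrtiso}) combined with univalence of $M$, which converts the merely object-level equality $\bottom f(K)(s\,\bottom f(K)\,x) = \bottom f(K)\,x$ into the genuine equality $s\,\bottom f(K)\,x = x$. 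Care is needed to ensure the reflection map is compatible with the identity indiscernibility and to track the bookkeeping of the $\epsilon$-coherences from \Cref{def:iso-within-a-structure}, but these become routine once the inverse has been identified.
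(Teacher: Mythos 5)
Your proposal is correct and follows essentially the same route as the paper's proof: the same reduction via \Cref{prop:uni1} to showing that each fibre map $U_f$ is an equivalence, the same induction on $n$, and the same key step of combining the inductive hypothesis, \Cref{prop:uni1}, \Cref{lem:injwrtiso}, and univalence of $M$ at $K$ to upgrade the section $s(K)$ to a two-sided inverse of $\bottom{f}(K)$. The only difference is cosmetic and comes at the end: the paper exhibits the round-trip homotopies $F_f U_f = 1$ and $U_f F_f = 1$ directly (the latter needing a footnoted care about homotopy-inverse data), whereas you conclude from both $\isIso(f)$ and $\VSS(f)$ being propositions with maps in both directions---a slightly cleaner finish, which implicitly relies on the standard fact that a type is a proposition as soon as it is one under the assumption of an inhabitant.
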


\begin{proof}%[Proof of Theorem~\ref{thm:hsip}]

It suffices to show that each $U_f$ of \Cref{def:isotovss} is an equivalence. 
We proceed by induction on $n$.
When $n \eqdef 0$, each $U_f$ is a endofunction on $\onetype$, and so is an equivalence.

When $n > 0$,
%Note that since $M'$ is univalent and $f'$ is a morphism  $\hom_{\derivcat{\L}{\bottom{M}}} (M' , \derivcat{\L}{\bottom{f}} N')$, by our inductive hypothesis, $U_{f'}: \isIso(\derivdia{f}) \to \Section(\derivdia{f})$ is an equivalence.
we first construct a map $F_f: \VSS(f) \to  \isIso(f)$.
Consider an element of $\VSS(f)$: a right inverse $s(K)$ of $\bottom{f}(K)$ for each $K : \bottom{\L}$, and $s' : \VSS(\derivdia{f})$.
Since $M'$ is univalent, the inductive hypothesis for $s'$ implies $\derivdia{f}$ is an isomorphism; thus it remains to show each $\bottom{f}(K)$ is an equivalence.

Since $s(K)$ is a right inverse of $\bottom{f}(K)$, it remains to show that we have $s(K)\bottom{f}(K) m = m$ for any $m: \bottom{M}(K)$. We have $\bottom{f}(K)s(K)\bottom{f}(K) m = \bottom{f}(K)m$ and thus $\bottom{f}(K)s(K)\bottom{f}(K) m \fiso \bottom{f}(K)m$.
We have already shown that $\derivdia{f}$ is an isomorphism $M' \cong ({\bottom{f}})^* N'$, so by \Cref{prop:uni1}, we get $M' = (\derivcat{\L}{\bottom{f}})^* N'$.
Thus, by \Cref{lem:injwrtiso}, we have $s(K)\bottom{f}(K) m \fiso m$; and since $M$ is univalent this yields $s(K)\bottom{f}(K) m = m$.

Thus, given our $(\lambda K.s(K), s'):\VSS(f)$, we have constructed an element of $\isIso(f)$; this defines $F_f: \VSS(f) \to  \isIso(f)$.
Since $\isIso(f)$ is a proposition (by \Cref{lem:isisoprop}), $F_f U_f = 1$.
Moreover, we constructed $F_f$ and $U_f$ such that $U_f F_f = 1$.\footnote{Since we showed that $\bottom{f}(K)$ was an equivalence by making $s(K)$ a homotopy inverse of it, and $U_f$ remembers not just the inverse map but one of the homotopies, we technically have to use here the fact that a homotopy inverse of a function $g$ can be enhanced to an element of $\isEquiv(g)$ while changing at most one of the constituent homotopies.}
Hence, $U_f : \isIso(f) \to \VSS(f)$ is an equivalence.

Thus, the function $\isotovss_{M,N} : (M \cong_L N) \to (M \twoheadrightarrow N  )$ is also an equivalence. Using \Cref{prop:uni1}, we find then that $\idtovss: (M = N) \to (M \twoheadrightarrow N)$ is an equivalence.
\end{proof}

\begin{example}\label{eg:vss-cat}
  \shortorlong{A split-surjective equivalence}{An equivalence} between univalent FOLDS-categories is the same as a fully faithful and split \shortorlong{(essentially)}{essentially} surjective functor, which by \cite[Lemma~6.6]{AKS13} is the same as an equivalence of categories.
  Thus, \Cref{thm:hsip} specializes to~\cite[Theorem 6.17]{AKS13}.
\end{example}

\shortorlong{
\begin{remark}
  With some more care, it is also possible to define a general analogue of non-surjective equivalences of categories and prove an HSIP for them.
  The idea is to replace surjectivity by essential surjectivity, i.e., surjectivity up to an indiscernibility.
  However, this is a bit tricky to make precise, because we want these indiscernibilities at all dimensions to live in the codomain structure $N$ and its derivatives; whereas with a na\"{i}ve inductive definition these indiscernibilities would end up living in pullbacks of these structures such as $f^*\derivdia{N}$ (the codomain of $\derivdia{f}$), which may not be univalent even if $N$ is.
  The details can be found in the extended version \cite{hsip_arxiv} of this article.
\end{remark}
}{We now move on to consider equivalences that are only \emph{essentially} surjective.
Makkai was unable to define non-surjective equivalences directly, instead considering \emph{spans} of surjective equivalences; but with our notion of indiscernibility we can avoid this detour.
However, we do have to be careful in the inductive step, because when considering $f:\hom_{\Struc{\L}}(M,N)$ we want all the indiscernibilities to lie in $N$ and its derivatives directly, \emph{not} in their pullbacks to derivatives at $M$.
This forces us define a somewhat more general notion.

For $a,b:\bottom{M}(K)$, we write $a\fiso^M_K b$ instead of $a\fiso b$ if needed to eliminate ambiguity.

\begin{definition}[\defemph{Relative equivalence}]
  Let $\L,\M:\Sig(n)$ and $\alpha:\hom_{\Sig(n)}(\L,\M)$, let $M: \Struc{\L}$ and $N:\Struc{\M}$, and let $f:\hom_{\Struc{\L}}(M,\alpha^* N)$.
  If $n\eqdef 0$, then $f$ is an equivalence relative to $\alpha$.
  For $n>0$, $f$ is an equivalence relative to $\alpha$ if
  \begin{enumerate}
  \item For all $K:\bottom{\L}$ and $y:\bottom{N}(\bottom{\alpha}(K))$, we have a specified $x:\bottom{M}(K)$ and indiscernibility $\bottom{f}(x) \fiso^N_{\bottom{\alpha}(K)} y$.
  \item $\derivdia{f}: \hom_{\Struc{\derivcat{\L}{\bottom{M}}}}(\derivdia{M},( \derivcat{\L}{\bottom{f}})^* \derivdia{(\alpha^* N)})$\shortorlong{\\ \phantom{x}\hfill}{} $\converts\hom_{\Struc{\derivcat{\L}{\bottom{M}}}}(\derivdia{M},( \derivcat{\L}{\bottom{f}})^* (\derivcat{\alpha}{\bottom{N}})^* \derivdia{N})$
   \narrowbreak is an equivalence relative to the composite
    \[ \derivcat{\L}{\bottom{M}} \xrightarrow{\derivcat{\L}{\bottom{f}}} \derivcat{\L}{\bottom{N}\circ \bottom{\alpha}} \xrightarrow{\derivcat{\alpha}{\bottom{N}}} \derivcat{\M}{\bottom{N}}. \]
  \end{enumerate}
  \defemph{Relative weak equivalences} are defined similarly, but requiring only 
  \( \Vert \sm{x:\bottom{M}(K)} (\bottom{f}(x) \fiso^N_{\bottom{\alpha}(K)} y) \Vert \) for each $K,y$.

  An unadorned \defemph{equivalence} means one relative to $\alpha\eqdef 1_{\L}$.
  We write $\streqv_\alpha(f)$ for the type ``$f$ is an equivalence relative to $\alpha$'', which in the inductive case means
  \begin{narrowmultline*}
    \streqv_\alpha(f) \eqdef \narrowbreak
    \left(\prd{K:\bottom{\L}}{y:\bottom{N}(\bottom{\alpha}(K))}\sm{x:\bottom{M}(K)} \left(\bottom{f}(K)(x) \fiso^N_{\bottom{\alpha}(K)} y\right)\right)
    \narrowbreak \times \streqv_{\derivcat{\alpha}{} \circ \bottom{f}} (\derivdia{f})
  \end{narrowmultline*}
  and $(M\simeq N) \eqdef \sm{f:\hom_{\Struc{\L}}(M,N)} \streqv_1(f)$ for the type of equivalences.
\end{definition}

Importantly, $\bottom{f}(x) \fiso^N_{\bottom{\alpha}(K)} y$ is distinct from $\bottom{f}(x) \fiso^{\alpha^*N}_{K} y$, even though $\bottom{(\alpha^* N)}(K) \converts \bottom{N}(\bottom{\alpha}(K))$ by definition.

\begin{lemma}\label{thm:vss-to-eqv}
  For $f:\hom_{\Struc{\L}}(M,\alpha^* N)$, we have a map $\VSS(f) \to \streqv_\alpha(f)$, which is an equivalence if $N$ is univalent.
\end{lemma}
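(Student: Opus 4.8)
The plan is to induct on $n$, mirroring the common inductive structure of $\VSS$ and $\streqv_\alpha$, and to construct the map while simultaneously proving it is an equivalence when $N$ is univalent. When $n \eqdef 0$ both $\VSS(f)$ and $\streqv_\alpha(f)$ are $\onetype$, so the unique map is an equivalence and there is nothing to check. For $n > 0$ the data of $\VSS(f)$ splits into a family of sections of $\bottom f(K)$ and an element $s' : \VSS(\derivdia f)$, and the data of $\streqv_\alpha(f)$ splits into a family of specified preimages-up-to-indiscernibility-in-$N$ together with an element of $\streqv_\beta(\derivdia f)$; I will build each component separately and pair them.

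For the forward map, suppose given $(\lambda K.\, s(K), s') : \VSS(f)$, where each $s(K)$ is a section of $\bottom f(K) : \bottom M(K) \to \bottom{(\alpha^* N)}(K) \converts \bottom N(\bottom\alpha(K))$. For the bottom component of $\streqv_\alpha(f)$, given $K : \bottom\L$ and $y : \bottom N(\bottom\alpha(K))$, I set $x \eqdef s(K)(y)$, obtaining an identification $\bottom f(K)(x) = y$, and then apply the canonical object-level map $\idtoiso_{\bottom f(K)(x),\, y}$ \emph{for the structure $N$ at the sort $\bottom\alpha(K)$} to convert this identification into the required indiscernibility $\bottom f(K)(x) \fiso^N_{\bottom\alpha(K)} y$. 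It is essential that this indiscernibility is formed in $N$ rather than in $\alpha^* N$, which is exactly what \Cref{def:iso-within-a-structure} (relativized) demands. For the derivative, recall that $\derivdia f : \hom(\derivdia M, (\derivcat\L{\bottom f})^*\derivdia{(\alpha^* N)})$ and that $\derivdia{(\alpha^* N)} \converts (\derivcat\alpha{\bottom N})^* \derivdia N$, so by functoriality of pullback $\derivdia f$ is a morphism into $\beta^* \derivdia N$ for the composite $\beta \eqdef \derivcat\alpha{\bottom N} \circ \derivcat\L{\bottom f}$. The inductive hypothesis applied to $\derivdia f$ turns $s'$ into an element of $\streqv_\beta(\derivdia f)$, which is precisely the second datum required. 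Pairing the two yields the map $\VSS(f) \to \streqv_\alpha(f)$.

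For the equivalence claim, assume $N$ is univalent. By the definition of univalence of structures, $\derivdia N$ is then univalent, so the inductive hypothesis makes the derivative map $\VSS(\derivdia f) \to \streqv_\beta(\derivdia f)$ an equivalence. For the bottom component, univalence of $N$ at the sort $\bottom\alpha(K)$ (a special case of $N$ being univalent at all sorts of $\bottom\M$) says exactly that $\idtoiso_{a,b} : (a = b) \to (a \fiso^N_{\bottom\alpha(K)} b)$ is an equivalence for all $a,b$. Hence post-composition with $\idtoiso$ is a fibrewise equivalence, giving $\sm{x : \bottom M(K)} (\bottom f(K)(x) = y) \simeq \sm{x : \bottom M(K)} (\bottom f(K)(x) \fiso^N_{\bottom\alpha(K)} y)$ for each $K,y$; taking $\prd{}$ over $K$ and $y$ shows the bottom map is an equivalence. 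Since the constructed map is the product of these two equivalences, it is itself an equivalence.

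The main obstacle is not conceptual but a matter of coherent bookkeeping: one must track the definitional equalities $\bottom{(\alpha^* N)}(K) \converts \bottom N(\bottom\alpha(K))$ and $\derivdia{(\alpha^* N)} \converts (\derivcat\alpha{\bottom N})^* \derivdia N$ along with the functoriality of pullback, so that the inductive hypothesis is invoked for the correct relative morphism $\beta$. The genuinely delicate point, and the very reason relative equivalences were introduced, is that the indiscernibility produced at the bottom level lives in $N$ via $\idtoiso$ at $\bottom\alpha(K)$, and that it is univalence of $N$ itself, rather than of the possibly non-univalent pullback $\alpha^* N$, that upgrades the forward map into an equivalence.
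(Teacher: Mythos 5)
Your proposal is correct and follows essentially the same route as the paper's proof: induction on $n$ with the trivial base case, the bottom component handled by applying $\idtoiso$ in $N$ at the sort $\bottom{\alpha}(K)$ fibrewise, and the derivative handled by the inductive hypothesis relative to the composite $\derivcat{\alpha}{\bottom{N}} \circ \derivcat{\L}{\bottom{f}}$, with univalence of $N$ (and hence of $\derivdia{N}$) upgrading both components to equivalences. You also correctly identify the key subtlety the paper flags, namely that the argument would fail for absolute equivalences since $\alpha^* N$ need not be univalent even when $N$ is.
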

\begin{proof}
  By induction on $n$.
  When $n\eqdef 0$, both are $\onetype$.
  For $n>0$, the desired map consists of the inductively defined $\VSS(\derivdia{f}) \to \streqv_{\derivcat{\alpha}{\bottom{N}}\circ \derivcat{\L}{\bottom{f}}}(\derivdia{f})$ together with a morphism
  \begin{narrowmultline*}
  \left(\prd{K: \bottom{\L}}{y:\bottom{N}(\bottom{\alpha}(K))} \sm{x:\bottom{M}(K)} \left(\bottom{f}(K)(x) =_{\bottom{N}(\bottom{\alpha}(K))} y\right)\right)\narrowbreak
    \to
    \left(\prd{K:\bottom{\L}}{y:\bottom{N}(\bottom{\alpha}(K))}\sm{x:\bottom{M}(K)} \left(\bottom{f}(K)(x) \fiso^N_{\bottom{\alpha}(K)} y\right)\right)
  \end{narrowmultline*}
  that is simply induced by $\idtoiso_{\bottom{f}(K)(x),y}$.
  The latter is an equivalence when $N$ is univalent by definition, as is the inductively defined map since $\derivdia{N}$ is univalent.
  (This last step would fail if we worked only with absolute equivalences, since $\alpha^*N$ can fail to be univalent even if $N$ is so.)
\end{proof}

\begin{theorem}[\defemph{Higher SIP}]\label{thm:hsip2}
  Consider $\L: \Sig(n)$ and $M,N: \Struc{\L}$ such that $M$ and $N$ are both univalent.
  The canonical morphism $\idtoeqv: (M = N) \to (M \simeq N)$ is an equivalence.
\end{theorem}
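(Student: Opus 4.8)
The plan is to exhibit $\idtoeqv$ as a composite of two equivalences, applying the split-surjective HSIP (\Cref{thm:hsip}) to the domain $M$ and the comparison \Cref{thm:vss-to-eqv} to the codomain $N$. For the first factor: since $M$ is univalent, \Cref{thm:hsip} gives that $\idtovss : (M = N) \to (M \twoheadrightarrow N)$ is an equivalence, accomplishing the passage from identifications to split-surjective equivalences.

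For the second factor I would invoke \Cref{thm:vss-to-eqv} in the absolute case $\alpha \converts 1_{\L}$, where $\alpha^* N \converts N$. For each $f : \hom_{\Struc{\L}}(M, N)$ it supplies a map $\VSS(f) \to \streqv_1(f)$ which is an equivalence because $N$ is univalent. Since $M \twoheadrightarrow N$ and $M \simeq N$ are sums over the common index type $\hom_{\Struc{\L}}(M,N)$ of the families $\VSS(f)$ and $\streqv_1(f)$, this fiberwise equivalence induces an equivalence $G : (M \twoheadrightarrow N) \to (M \simeq N)$ on total spaces by \cite[Thm.~4.7.7]{HTT}.

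It then remains to identify $\idtoeqv$ with the composite $G \circ \idtovss$. Both are maps out of the identity type $(M = N)$, so by path induction it suffices to compare them at $\refl_M$. There $\idtoeqv$ returns the identity equivalence by definition, while $\idtovss(\refl_M)$ is the identity split-surjective equivalence, which $G$ carries to the identity equivalence: the underlying morphism is unchanged, and the fiberwise map of \Cref{thm:vss-to-eqv}, built pointwise from $\idtoiso_{\bottom{f}(K)(x),y}$, sends identity data to identity data. Hence $\idtoeqv$ is homotopic to the composite $G \circ \idtovss$ of two equivalences, and is therefore itself an equivalence.

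I expect the final coherence step to be the main obstacle. Verifying at $\refl_M$ that the three inductively-defined ``identity'' witnesses — the identity isomorphism, the identity split-surjective equivalence, and the identity relative equivalence — are matched correctly requires unfolding the transport and $\idtoiso$ coherences inserted at each rank and checking that they compose to the reflexivity data expected by $\idtoeqv$. As in the proofs of \Cref{thm:hsip} and \Cref{lem:injwrtiso}, this is bookkeeping by induction on the height $n$ rather than a genuinely new difficulty; the two univalence hypotheses are used exactly once each, for $M$ in the first factor and for $N$ in the second.
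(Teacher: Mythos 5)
Your proposal is correct and matches the paper's own proof, which is literally ``Combine \cref{thm:hsip,thm:vss-to-eqv}'': you apply the split-surjective HSIP to the domain side (using univalence of $M$) and \cref{thm:vss-to-eqv} at $\alpha \converts 1_{\L}$ fiberwise over $\hom_{\Struc{\L}}(M,N)$ (using univalence of $N$), then identify $\idtoeqv$ with the composite by path induction. The extra bookkeeping you flag about matching the identity witnesses at $\refl_M$ is exactly the routine verification the paper leaves implicit.
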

\begin{proof}
  Combine \Cref{thm:hsip,thm:vss-to-eqv}.
\end{proof}

% Local Variables:
% TeX-master: "hsip-arxiv"
% End:

}

\shortorlong{We do not know any analogue of~\cite[Lemma 6.8]{AKS13}, i.e., an HSIP for \emph{weak} equivalences.}{
  One might also hope for an analogue of~\cite[Lemma 6.8]{AKS13} about weak equivalences, i.e., an HSIP for \emph{weak} equivalences.
  A natural way to try to prove this would be by enhancing \cref{lem:injwrtiso} to say that some induced map ``$f : (x\fiso y)\to (f x \fiso f y)$'' is an equivalence, so that a weak equivalence between univalent structures would be an embedding and hence an equivalence.
  Unfortunately, an arbitrary morphism between structures does not induce any such map on types of indiscernibilities, even when it is an identity on derived structures as in \cref{lem:injwrtiso}.

  \begin{example}
    Let $\L$ be the height-2 signature such that an $\L$-structure consists of a type $MA$ and a binary relation $MR : MA \to MA \to \U$.
    Univalence at $R$ means that each $MR(x,y)$ is a proposition; while $a\fiso b$, for $a,b:MA$, means that $MR(x,a)\leftrightarrow M(x,b)$ for all $x$, $MR(a,x)\leftrightarrow M(b,x)$ for all $x$, and $MR(a,a)\leftrightarrow MR(b,b)$.

    Let $M$ be the $\L$-structure with $MA = \{a,b\}$ and $MR(x,y)$ always false, and $N$ the $\L$-structure with $NA=\{a,b,c\}$ with $NR$ always false except that $NR(a,c)$ is true.
    Let $\bottom{f}:MA\to NA$ be the inclusion, so that $\derivdia{M} = (\derivcat{\L}{\bottom{f}})^* \derivdia{N}$.
    Then $a\fiso b$ in $M$, but $\bottom{f}a \not\fiso \bottom{f}b$ in $N$.
  \end{example}

  Functors between categories (and the other categorical examples to be discussed in \Cref{sec:examples}) do generally preserve indiscernibilities, but only because the indiscernibilities in such cases they admit an equivalent ``diagrammatic'' characterization by a suitable ``Yoneda lemma'' (as described for categories in \cref{sec:folds-cats}).
  Note that this depends on the theory (i.e., the axioms) as well as the signature.
  We do not know a general condition on a theory ensuring that morphisms between its structures preserve indiscernibilities, nor do we know how to prove any general HSIP for weak equivalences.
}

\section{Axioms and Theories}

One of Makkai's goals was to define, for a given (FOLDS)-signature $\L$, a language for properties that are invariant under $\L$-equivalence.
\shortorlong{}
{
He calls such invariance the ``Principle of Isomorphism'' \cite{MakSFAM}:
\begin{quote}
    The basic character of the Principle of Isomorphism is that of a constraint on the
    language of Abstract Mathematics; a welcome one, since it provides for the separation of sense from nonsense. 
\end{quote}
}
Working in 2LTT, we do not need to devise a language for invariant properties ourselves; instead, we can rely on the homotopical fragment of 2LTT to sufficiently constrain our language.%
\footnote{An internally-syntactic description of such ``axioms'' might nevertheless prove useful in the future.}
\begin{definition}
 Let $\L$ be a signature. An \defemph{$\L$-axiom} is a function 
 $\Struc{\L} \to \PropU$.
\end{definition}

\begin{example}
 The axioms given in \Cref{eq:comp-unique,eq:E-cong} straightforwardly give rise to axioms for the signature $\LcatE$.
\end{example}

Any $\L$-axiom restricts to a predicate on univalent $\L$-structures. Our main result implies that any $\L$-axiom is invariant under equivalence of univalent $\L$-structures:
\begin{corollary}
 Given an $\L$-axiom $t$, univalent $\L$-structures $M$, $N$, and a split-surjective equivalence $M \twoheadrightarrow N$, we have $t(M) \leftrightarrow t(N)$.
\end{corollary}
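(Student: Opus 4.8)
The plan is to reduce the corollary directly to the Higher SIP (\Cref{thm:hsip}). Given univalent $\L$-structures $M,N$ and a split-surjective equivalence $g:M\twoheadrightarrow N$, I first invoke \Cref{thm:hsip}: since $M$ is univalent, the map $\idtovss_{M,N}:(M=N)\to(M\twoheadrightarrow N)$ is an equivalence. In particular it is surjective, so there exists an identification $p:M=N$ whose image $\idtovss(p)$ is (up to the equivalence) the given $g$; but for the corollary I do not even need to track $g$, since all I want is to produce \emph{some} identification $M=N$. The mere existence of a split-surjective equivalence, together with the fact that $\idtovss$ is an equivalence hence an equivalence onto its codomain, yields an inhabitant of $M=N$.

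Once I have $p:M=N$, the rest is a routine transport argument. An $\L$-axiom $t$ is by definition a function $\Struc{\L}\to\PropU$. I would first note that although $t$ is defined on all $\L$-structures, the underlying-structure projection $\uStruc{\L}\to\Struc{\L}$ lets me evaluate $t$ at the underlying structures of $M$ and $N$; and by the last clause of the lemma characterizing $\uStruc{\L}$ (identification of univalent structures corresponds to identification of underlying structures), the identification $p$ lifts to an identification of the underlying $\L$-structures, to which $t$ applies. Applying the action of $t$ on paths to $p$ produces an identification $t(M)=_{\PropU} t(N)$ in the universe of propositions, i.e., $\mathsf{ap}_t(p):t(M)=t(N)$.

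Finally, an identification between propositions entails a logical equivalence: transporting along $\mathsf{ap}_t(p)$ gives a map $t(M)\to t(N)$, and transporting along its inverse gives $t(N)\to t(M)$, so $t(M)\leftrightarrow t(N)$ as required. (Since $t(M)$ and $t(N)$ are propositions, one does not even need the two maps to be mutually inverse; a logical equivalence suffices and is all that is asserted.) This completes the argument.

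I do not anticipate a genuine obstacle here, as the corollary is essentially a formal consequence of \Cref{thm:hsip} plus \emph{indiscernibility of identicals} (\eqref{eq:indiscernability_of_identicals}) applied to the particular ``property'' $t$. The only point requiring a little care is the bookkeeping of the previous paragraph: ensuring that $t$, which is stated as a function on $\Struc{\L}$ rather than on $\uStruc{\L}$, is correctly evaluated at the underlying structures and that the identification $p$ of univalent structures descends to the level at which $t$ is defined. This is handled entirely by the cited correspondence between identifications of univalent structures and of their underlying structures, so no new homotopical input beyond \Cref{thm:hsip} is needed.
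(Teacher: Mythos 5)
Your proposal is correct and matches the paper's (implicit) argument exactly: the paper states this corollary without proof as an immediate consequence of \cref{thm:hsip}, namely obtaining an identification $p : M = N$ from the split-surjective equivalence via the inverse of $\idtovss$, and then transporting the proposition-valued axiom $t$ along $p$. Your extra care about the projection $\uStruc{\L}\to\Struc{\L}$ is harmless but not strictly needed, since \cref{thm:hsip} is already stated for $M,N:\Struc{\L}$ with univalence as a side condition, so $t$ applies to $M$ and $N$ directly.
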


A \defemph{theory} is a pair $(\L,T)$ of a signature $\L$ and a family $T$ of $\L$-axioms whose indexing type is fibrant; e.g., a list of five axioms can be specified by a family indexed by the standard finite fibrant type of five elements. A \defemph{(univalent) model of a theory $(\L,T)$} then consists of a (univalent) $\L$-structure $M$ together with a proof $t(M)$ for each axiom $t$ of $T$. The type of such models is fibrant.
In the next section, we discuss some particular theories and their univalent models.

\section{Examples of Theories}\label{sec:examples}

In this section, we discuss some examples of theories and their (univalent) models.

\begin{example}[First-order logic]
  Consider a many-sorted first-order theory with relations and equality (assumed to be a congruence):
  \[ \xymatrix{
    E_1 \ar@<1mm>[d] \ar@<-1mm>[d] & R_1 \ar[dl] \ar[dr] & E_2 \ar@<1mm>[d] \ar@<-1mm>[d] & R_2 \ar[dlll] \ar@<1mm>[dr] \ar@<-1mm>[dr] & E_3 \ar@<1mm>[d] \ar@<-1mm>[d] & R_3 \ar[dl]  \\
    A_1 && A_2 && A_3 & \dots
  }\]
  \begin{itemize}
  \item Univalence at $E_i$ and $R_i$ makes them proposition-valued.
  \item Univalence at $A_i$ makes it a set whose equality is $E_i$.
  \end{itemize}
  We recover first-order logic with equality.
  Any instance of this example, with sorts $(A_i)_{i : I}$, is also an instance of the SIP \cite[Section~9.9]{HTT} over $\Set^I$, including the examples of posets, monoids, groups, and fields mentioned in \Cref{sec:sip0}.
\end{example}

\begin{example}[$\dagger$-categories]
    A $\dagger$-category is a category with coherent isomorphisms 
    $(\_)^\dagger : \hom(x,y) \to \hom(y,x)$.
    \shortorlong{}{This is an interesting example to consider in structural approaches to category theory, since the correct notion of ``sameness'' for objects of a $\dagger$-category is \emph{not} ordinary isomorphism but rather \emph{unitary} isomorphism (one satisfying $f^{-1} = f^\dagger$), and similarly ``$\dagger$-structure'' on a category does not transport naturally across equivalence of categories.
    In our framework we can deal with this by incorporating the $\dagger$-structure into the signature, represented of course by its graph.}
A signature for $\dagger$-categories is as follows,
  \[
  \vcenter{\xymatrix@R=1.5pc{
      D \ar@<0mm>[dr]^{o} \ar@<-1mm>[dr]_{i} & T \ar[d] \ar@<1mm>[d] \ar@<-1mm>[d] & I \ar[dl] & E \ar@<1mm>[dll] \ar@<0mm>[dll] \\
      & A \ar@<1mm>[d]^{d} \ar@<-1mm>[d]_{c} \\
      & O}}
  \]
  with $co \steq di$ and $do \steq ci$ and the strict equalities of \Cref{fig:signatures}.
  Then for $\dagger$-categories regarded as structures:
  \begin{itemize}
  \item % Given a structure $M$,
    An indiscernibility between $x,y:MO$ is a \emph{unitary isomorphism} $f:x \cong y$\shortorlong{, i.e., one satisfying $f^{-1} = f^\dagger$.}{.}
    % \pause
  \item In a univalent $\dagger$-category, $O$ is the groupoid of objects and unitary isomorphisms.
  \item \shortorlong{Split-surjective equivalences}{Equivalences of structures} % (which are underlying equivalences of univalent $\dagger$-categories)
    are \emph{$\dagger$-equivalences}, involving unitary natural isomorphisms.
  \end{itemize}
\end{example}

\shortorlong{}
{
\begin{example}[Presheaves]\label{eg:presheaves}
 The signature of a category with a presheaf on it is as follows:
 \[
 \xymatrix{ 
      T \ar[dr] \ar@<1mm>[dr] \ar@<-1mm>[dr] & I \ar[d] & E \ar@<.5mm>[dl] \ar@<-.5mm>[dl] & PA \ar@<1mm>[d]^{\overline{d}} \ar@<-1mm>[d]_{\overline{c}} \ar[dll]^{a} & PE\ar@<.5mm>[dl] \ar@<-.5mm>[dl] \\
      & A \ar@<1mm>[d]^{c} \ar@<-1mm>[d]_{d} & & PO \ar[dll]^{o}\\
      & O}
\]
with the equations $da \steq o\overline{d}$ and $ca \steq o\overline{c}$.
Given a structure $M$ of this signature, univalence at $PA$ means that $MPA(f, a, b)$ is a proposition, intuitively indicating that the function $P(f)$ maps $b$ to $a$.
In particular, one of the axioms we need to impose to carve out the presheaves among the structures is
\[ PA(f, a, b) \to PA(f, a', b) \to PE(a, a').\]
Univalence at $PO$ means that $MPO(x)$ is a set with equality $PE$, as expected for a presheaf.
Univalence at $A$ means that $MA(x,y)$ is a set with equality $ME$, as in a category: the additional dependency $PA$ doesn't disrupt this since we impose additional axioms saying that $E$ is a congruence for it as well.
Finally, an indiscernibility between elements of $MO$ consists of an indiscernibility (hence just an isomorphism) in the underlying category, together with a coherent bijection on values of the presheaf; but by an argument similar to that of \Cref{sec:folds-cats}, the latter part is trivial.
Thus, a univalent structure satisfying appropriate axioms is precisely a univalent category together with a presheaf on it.

A map $f : \hom(M,N)$ of structures consists of a functor $f$ between the underlying categories (given by the components $O$, $A$, $T$, $I$, $E$) of the signature, together with a natural transformation between the presheaves specified by $M$ and $N$ (given by the components $PO$, $PA$, and $PE$). Here, the component of $f$ on $PA$ encodes naturality.
If $f$ is an equivalence of structures, then its underlying functor is an equivalence; moreover essential surjectivity on the component $PA$ implies injectivity of the underlying natural transformation, whereas on $PO$ it implies that the natural transformation is pointwise surjective---thus it is a natural isomorphism.
\end{example}
}

\shortorlong{}
{
\begin{example}[Functors; {\cite[Section~6]{MFOLDS}}]\label{eg:anafunctors}
Just as we can represent the composition operation in a category by the relation $T$, we can describe two categories and a functor between them by adding ``relations'' between their objects and arrows:
\[
 \xymatrix{ 
      DT \ar[dr] \ar@<1mm>[dr] \ar@<-1mm>[dr] & DI \ar[d] & DE \ar@<.5mm>[dl] \ar@<-.5mm>[dl] & FA \ar@<1mm>[d]^{\overline{c}} \ar@<-1mm>[d]_{\overline{d}} \ar[dll]^{\sigma A}
               \ar[drr]_{\tau A}
      & CT \ar[dr] \ar@<1mm>[dr] \ar@<-1mm>[dr] & CI \ar[d] & CE \ar@<.5mm>[dl] \ar@<-.5mm>[dl]
      \\
      & DA \ar@<1mm>[d]^{c} \ar@<-1mm>[d]_{d} & & FO \ar[dll]_{\sigma O} \ar[drr]^{\tau O}
      & & CA \ar@<1mm>[d]^{c} \ar@<-1mm>[d]_{d} \\
      & DO & & & & CO}
\]      
with the obvious equations on arrows. Here, $DO$ is the sort of objects of the domain category (with $D$ in $DO$ standing for ``domain''), $CO$ the sort of objects in the codomain, and $FO(x,y)$ the sort of ``witnesses that $F(x) = y$''.
For instance, if $F$ is a cartesian product functor, then an element of $FO((x_1,x_2),y)$ would be a product diagram $x_1 \leftarrow y \to x_2$.
Note that $FO$ does not generally consist of mere propositions: an object $y:CO$ can ``be the image'' of $x:DO$ in more than one way.
(For instance, a single object can be a cartesian product of two objects $x_1,x_2$ in more than one way.)

Given two such witnesses $w_1:FO(x_1,y_1)$ and $w_2:FO(x_2,y_2)$ and morphisms $g:DA(x_1,x_2)$ and $h:CA(y_1,y_2)$, an element of $FA(w_1,w_2,g,h)$ represents the assertion that $F(g) = h$ according to the witnesses $w_1$ and $w_2$.
We impose axioms stating that $F$ does have a value on each possible input object and arrow, e.g., $\forall x : DO, \exists y : CO, FO(x,y)$, and that composition, identities, and equality are preserved.
The result is what Makkai~\cite{makkai:avoiding-choice} calls an \emph{anafunctor}, whose ``values'' can be specified only up to isomorphism; if dependencies are forgotten, it can be thought of as a span of functors between (pre)categories in which the first leg is a surjective equivalence.

As in \Cref{eg:presheaves}, univalence at $DA$ and $CA$ just means they are sets with equalities $DE$ and $CE$.
Univalence at $FO$ is more subtle since it has no specified equality; of course it implies that $FO$ is a set, but it also says (roughly) that two witnesses $w_1,w_2:FO(x,y)$ are equal if they induce the same action on all arrows whose domain or codomain (or both) is $x$.
This is the uniqueness part of ``saturatedness'' for an anafunctor; the rest can be imposed by a pure existence axiom.

Univalence at $DO$ and $CO$ does \emph{not} reduce to ordinary univalence of the domain and codomain categories.
An indiscernibility in $DO$ is an isomorphism in the domain category equipped with a transport function for $FO$, i.e., an isomorphism $x_1\cong x_2$ together with bijections $FO(x_1,y) \cong FO(x_2,y)$ that respect the functorial action on arrows ($FA$), and similarly for $CO$.
Note that since $F$ acts on isomorphisms, transport for isomorphisms in the domain can be obtained from transport in the codomain.
Thus, if an anafunctor is a univalent structure for this signature, then its domain and codomain are univalent categories in the usual sense if and only if it is saturated.

Finally, a morphism between two \emph{saturated} anafunctors \emph{qua} structures consists of functors between the underlying categories (parts $D$ and $C$ of the signature) together with an (ana)natural isomorphism inhabiting the appropriate square.
The components of the latter come from the action of the morphism on $FO$, while naturality comes from its action on $FA$.
A morphism between unsaturated anafunctors is a square that commutes up to specified witnesses in $FO$, which may be more restricted than arbitrary isomorphisms.
\end{example}
}

\shortorlong{}
{
\begin{example}[Profunctors]
A profunctor from $C$ to $D$ is a functor $F: C^{\text{op}} \times D \to \Set$.
We can represent two categories and a profunctor between them using the following signature:
 \[
 \xymatrix{ 
      CT \ar[dr] \ar@<1mm>[dr] \ar@<-1mm>[dr] & CI \ar[d] & CE \ar@<.5mm>[dl] \ar@<-.5mm>[dl] & FA \ar@<1mm>[d]^{\overline{c}} \ar@<-1mm>[d]_{\overline{d}} \ar[dll]^{a}
               \ar[drrr] 
      & FE \ar@<.5mm>[dl] \ar@<-.5mm>[dl] & DT \ar[dr] \ar@<1mm>[dr] \ar@<-1mm>[dr] & DI \ar[d] & DE \ar@<.5mm>[dl] \ar@<-.5mm>[dl]
      \\
      & CA \ar@<1mm>[d]^{c} \ar@<-1mm>[d]_{d} & & FO \ar[dll]^{o} \ar[drrr]
      & & & DA \ar@<1mm>[d]^{c} \ar@<-1mm>[d]_{d} \\
      & CO & & & & & DO}
\]
This looks very much like the signature for anafunctors, but we include an equality relation on $FO$, and moreover the composition equations are different: we have $da \steq o\overline{c}$ and $ca \steq o\overline{d}$ imposing contravariance in the first factor.
The rest of the theory of this structure is just a two-sided version of \Cref{eg:presheaves}.
\end{example}
}

\shortorlong{}
{
\begin{example}[Semi-displayed categories; see also {\cite[p.~107]{MFOLDS}}]
Displayed categories \cite{DBLP:journals/lmcs/AhrensL19} were developed, in particular, as a framework to define, in type theory, fibrations of categories without referring to equality of objects.
A displayed category  $\D$  over a category $\C$
is given by, for any $c : \ob{\C}$, a type $\D(c)$ of ``objects over $c$'', and,
for any morphism $f : \C(a,b)$ and $x : \D(a)$ and $y : \D(b)$, a type $\D_f(x,y)$ of ``morphisms from $x$ to $y$ over $f$'', together with suitably typed composition and identity operations.
This definition can be translated directly into a FOLDS-signature:
\[
 \xymatrix{ 
         &   & & DT \ar[dlll] \ar[dr] \ar@<1mm>[dr] \ar@<-1mm>[dr] &  DI \ar[dlll] \ar[d] & DE \ar[dlll] \ar@<.5mm>[dl] \ar@<-.5mm>[dl] 
           \\
      T \ar[dr] \ar@<1mm>[dr] \ar@<-1mm>[dr] & I \ar[d] & E \ar@<.5mm>[dl] \ar@<-.5mm>[dl] & & DA \ar[dlll] \ar@<1mm>[d] \ar@<-1mm>[d] \\
      & A \ar@<1mm>[d] \ar@<-1mm>[d] & & & DO \ar[dlll]\\
      & O}
\]
but the result is not well-behaved.
In particular, since $A$ has rank 1 in a height-4 signature, it might not be a set even in a univalent structure, and similarly $O$ might not be a 1-type.
When discussing fibrations, Makkai makes essentially the same point by saying that that this signature disqualifies $A$ from carrying an equality relation, since it only makes sense to impose equality relations, in the usual sense, on sorts that are only one level below the top.

One way to solve this problem would be to allow the base category to be a bicategory (though the fibers are only 1-categories), as in \Cref{eg:bicats} below.
However, we can avoid this complexity with the following signature due to Makkai, whose only dependency is for the objects:
\[
 \xymatrix{ 
   T \ar[dr] \ar@<1mm>[dr] \ar@<-1mm>[dr] & I \ar[d] & E \ar@<.5mm>[dl] \ar@<-.5mm>[dl] & FA \ar[rrd] \ar[lld] & DT  \ar[dr] \ar@<1mm>[dr] \ar@<-1mm>[dr] &  DI  \ar[d] & DE \ar@<.5mm>[dl] \ar@<-.5mm>[dl] \\
   & A \ar@<1mm>[dd] \ar@<-1mm>[dd] & & & & DA \ar@<1mm>[d] \ar@<-1mm>[d] \\
   & & & & & DO \ar[dllll]\\
   & O}
\]
The dependency $DA \to A$ is replaced by the relation $FA$, asserted to be a functional relation, and the dependencies $DT \to T$ and $DI \to I$ are replaced by axioms, e.g., $ DI_{c,x}(\overline{f}) \wedge FA_{c,c,x,x}(f,\overline{f}) \to I_{c}(f)$.
A structure for this signature might be called a ``semi-displayed category''; it consists of, for any $c : \ob{\C}$, a type $\D(c)$ of objects over $\C$, and for any $x : \D(a)$ and $y : \D(b)$ a type $\D(x,y)$ with a function $\D(x,y) \to \C(a,b)$.
While they may appear more \textit{ad hoc} than displayed categories, semi-displayed categories do suffice to define notions involving strict fibers of functors, such as fibrations of categories.

As usual, we assert that $E$ and $DE$ are congruences for all the relations, including $FA$.
Thus, in a univalent structure $M$, all the top sorts are propositions, both $A$ and $DA$ are sets with standard equality, and each fiber category over $c:MO$ is a univalent category in the usual sense.
An indiscernibility between objects $c,d:MO$ consists of an ordinary isomorphism $\phi:c\cong d$ in the underlying base category together with all possible liftings of it in both directions, e.g., for any $x:MDO(c)$ a choice of a $y:MDO(d)$ and an isomorphism $x\cong y$ over $\phi$, and dually.
Since (assuming univalence at $DO$ and above) such liftings are unique when they exist, the type of such indiscernibilities is a subtype of that of ordinary isomorphisms.
Thus, in a univalent semi-displayed category, $MO$ is a 1-type, even though \Cref{thm:hlevel} only implies that it is a 2-type.
Moreover, when $M$ is univalent, the underlying ordinary category of the base category is univalent if and only if the semi-displayed category is an isofibration (which is a pure existence axiom; cf.\ also \cite[Problem~5.11]{DBLP:journals/lmcs/AhrensL19}).

A morphism of structures consists of a functor between the underlying categories and a ``semi-displayed functor'' above it; it is an equivalence of structures when both functors are equivalences.
\end{example}
}

\begin{example}[Multicategories/Colored operads]
 A (non-symmetric) multicategory (or colored non-symmetric operad) (see, e.g., \cite[Section~I.2]{higher-ops-higher-cats}) has arrows of different arity, generalizing the notion of $n$-ary functions on sets.
 The data of a multicategory is specified via the signature below:
 \[
  \begin{xy}
   \xymatrix{
         I\ar[rrrd] & T_{0;1} \ar[rrd] \ar@<-.5mm>[rd]\ar@<.5mm>[rd] & T_{1;1} \ar[dr] \ar@<1mm>[dr] \ar@<-1mm>[dr] 
         &
         T_{0,0;2}\ar[dl] \ar@<1mm>[dl] \ar@<-1mm>[dl] \ar[dr]
         & 
         T_{1,1;2} \ar@<-.5mm>[ld]\ar@<.5mm>[ld]\ar@<-.5mm>[d]\ar@<.5mm>[d]
        &
        T_{2;1}  \ar[dll] \ar@<-.5mm>[dl] \ar@<.5mm>[dl] & \ldots
        & & E_i \ar@<1mm>[d] \ar@<-1mm>[d]
       \\
       & & A_0 \ar[rd] & A_1 \ar@<1mm>[d] \ar@<-1mm>[d] & A_2 \ar[dl] \ar@<1mm>[dl] \ar@<-1mm>[dl] & \ldots
       & & \text{plus} & A_i
       \\
           & & & O
   }
  \end{xy}
 \]
 In this signature, $A_1$ is the sort of arrows known from categories, with one ``input'' object. The sort $A_0$ denotes arrows with no inputs, the sort $A_2$ arrows of 2 inputs, and so on. Accordingly, we have composition operations for such morphisms: the sort $T_{1;1}$ denotes the composition of two arrows with one input each---the composition known from categories. The sort $T_{1,1;2}$ denotes composition of two unary arrows with one arrow of two inputs, resulting in a composite arrow of two inputs.
 
 The signature for multicategories also includes an equality sort $E_i \rightrightarrows A_i$ for each $i$, but for readability we have omitted these from the main diagram.
 On a structure of this signature we can impose suitable axioms for the composition and identity in such a way that a model of the resulting theory is precisely a multicategory in the usual sense.
 
 An isomorphism $\phi : a \cong b$ in a multicategory is analogous to an isomorphism in a category: it consists of a morphism $f : A_1(a,b)$ together with $g : A_1(b,a)$ that is both pre- and post-inverse to $f$.
 Given a structure for the signature above, an indiscernibility $\phi : a \fiso b$ consists, in particular, of equivalences as in \crefrange{item:foldsiso1a}{eq:Eaa} (where $A$ needs to be replaced by $A_1$); 
 we have established in \Cref{thm:iso-foldsiso} that this data determines uniquely an isomorphism in the multicategory.
 In addition, the indiscernibility $\phi$ consists of further analogous equivalences for the sorts of $n$-ary arrows $A_i$ and their compositions.
 For instance, it includes a family of equivalences
 \begin{equation*}
       \phi_{xy\bullet} : A_2(x,y;a) \cong A_2(x,y;b)
 \end{equation*}
 and a family of equivalences
 \begin{equation}\label{eq:nonsym-multicat-comp}
  (T_{2,1})_{w,x,y,a}(f, g, h) \leftrightarrow (T_{2,1})_{w,x,y,b}(f, \phi_{y\bullet}(g),\phi_{wx\bullet}(h)).
 \end{equation}
 This latter equivalence with $h \eqdef f$ and $g\eqdef 1_a$ shows that the family $\phi_{xy\bullet}$ is given by postcomposition with the isomorphism corresponding to $\phi$, and similarly for the other families of maps.
 Thus, indiscernibilities in a multicategory also coincide with isomorphisms, so a multicategory is univalent precisely when its underlying category is.
 
 A morphism of structures accordingly corresponds to a functor between multicategories; it is an equivalence of structures if the functor is an equivalence.
\end{example}

\begin{example}[(Fat) symmetric multicategories]
 A symmetric multicategory is a multicategory with an action of the symmetric group $S_n$ on the arrows $A_n$; e.g., morphisms $A(x,y; z)$ correspond uniquely to morphisms $A(y,x; z)$. 
 These actions are furthermore asserted to be compatible with composition.
 
 Here, we consider an equivalent formulation of symmetric multicategories, via the notion of \emph{fat} symmetric multicategories (see, e.g., \cite[Appendix A.2]{higher-ops-higher-cats}).
 Its signature is similar to that of non-symmetric multicategories, but the arrows are instead indexed by \emph{unordered} finite sets $[n]$ of cardinality $n$.
 That is, the type $\L(1)$ of rank-1 sorts is $\FinSet$, a 1-type that is not a set.
 (Note that $\Nat$ is the 0-truncation of $\FinSet$, and also the type of \emph{ordered} finite sets.)
 Similarly, the compositions are indexed by the type
 \[ \tsm{X:\FinSet} (X \to \FinSet) \]
 where $(X,Y)$ denotes the composition of one morphism whose inputs are indexed by $X$ with a family of $X$ morphisms of which the $x^{\mathrm{th}}$ has inputs indexed by $Y(x)$.
 The fact that $\L(1)$ and $\L(2)$ are not sets makes it hard to draw this signature non-misleadingly, but we can give it a try, denoting the identifications in these types by loops:
 \[
   \xymatrix{
         I\ar[rrrd] & T_{[0];[1]} \ar[rrd] \ar@<-.5mm>[rd]\ar@<.5mm>[rd] & T_{[1];[1]} \ar[dr] \ar@<1mm>[dr] \ar@<-1mm>[dr] 
         &
         T_{\{[0],[0]\};[2]}\ar[dl] \ar@<1mm>[dl] \ar@<-1mm>[dl] \ar[dr] \ar@(ul,ur)^{S_2}
         & 
         T_{\{[1],[1]\};[2]} \ar@<-.5mm>[ld]\ar@<.5mm>[ld]\ar@<-.5mm>[d]\ar@<.5mm>[d] \ar@(ul,ur)^{S_2}
        &
        T_{[2];[1]}  \ar[dll] \ar@<-.5mm>[dl] \ar@<.5mm>[dl] \ar@(ul,ur)^{S_2} & \ldots
       \\
            & & A_{[0]} \ar[rd] & A_{[1]} \ar@<1mm>[d] \ar@<-1mm>[d] & A_{[2]} \ar[dl] \ar@<1mm>[dl] \ar@<-1mm>[dl] \ar@(r,d)^{S_2} & \ldots
       \\
           & & & O
   }
 \]
  Here $[n]$ denotes the standard $n$-element finite set $\{0,1,2,\dots,n-1\}$, while $S_n$ denotes its automorphism group, the symmetric group on $n$ elements.
  These loop ``arrows'' here are not morphisms in the inverse semi-category that represents our FOLDS-signature, but they do also act (by transport) on the types $M A_{[n]}$ in any structure.

 The composition sorts drawn above, representing the elements of the 0-truncation of the types of sorts, are almost like the sorts for non-symmetric multicategories, but some get identified.
 For instance, for non-symmetric multicategories there are two different sorts $T_{0,1;2}$ and $T_{1,0;2}$ for composition of a binary operation with a(n ordered) pair of a zeroary and a unary operation, but for the symmetric variant, these two compositions collapse into one connected component that we have written $T_{\{[0],[1]\}; [2]}$, which has an $S_2$ symmetric action.
 In general, the isotropy group of $T_{\{[k_1],\dots,[k_n]\};[n]}$ is the semidirect product $(S_{k_1}\times\cdots\times S_{k_n})\rtimes S_n$.
 As in the non-symmetric case, we have omitted the equality sorts $E_{[n]}$ on $A_{[n]}$ for readability.
 We assert associativity and unitality axioms for composition as spelt out in \cite[Appendix A.2]{higher-ops-higher-cats}.

 Univalence at the top-level sorts entails that the equality, composition, and identity sorts are pointwise propositions; 
 at $A_{[n]}$, it entails that $A_{[n]}$ are pointwise sets with equality given by their respective equality sorts.
 
 % MS: I don't think this is necessary to introduce, since below we only talk about unary arrows.
 % We use set-notation for families of elements; e.g., given objects $x,y,z : O$, we write $A_{[2]}(\{x,y\};z)$ for the arrows from the family $\{x,y\}$ to $z$.
  
 An indiscernibility $a \fiso b$ in $O$ consists of equivalences of sorts, e.g.,
 for $n\eqdef 1$ we have $\phi_{x\bullet} : A_{[1]}(\{x\};a) \cong A_{[1]}(\{x\}; b)$, and similar for the other hole and both holes in $A_{[1]}$.
 These equivalences are furthermore coherent with respect to the sorts $I$ and $T$, e.g., they satisfy the analog to \Cref{eq:nonsym-multicat-comp}.
 Given $a \fiso b$, we obtain in particular $\phi \eqdef \phi_{a\bullet}(1_a) : A_{[1]}(\{a\}; b)$. The morphism $\phi$ is an isomorphism; by the coherence with respect to $T$, the other equivalences for the sorts $A_{[n]}$ are given by suitable composition with $\phi$ or its inverse.
 Thus an indiscernibility $a \fiso b$ in $O$ is exactly an isomorphism $a \cong b$.
 
 A morphism of such structures is precisely a functor of symmetric multicategories; it is an equivalence if the functor is an equivalence.
\end{example}

\shortorlong{}
{
\begin{example}[Bicategories; {\cite[Section 7]{MFOLDS}}]\label{eg:bicats}
  It is not possible to describe strict 2-categories of any sort as univalent structures for one of our signatures, since the notion of a strict 2-category requires the 1-cells to form a set (so that composition can be strictly associative, i.e., up to a proposition-valued equality), but the 1-cells have two levels of dependency above them and so cannot be expected to be a set in a univalent structure.
  However, we can represent bicategories with the following signature from~\cite[p.~110]{MFOLDS} (with equality added):
  \[
    \begin{tikzcd}[ampersand replacement=\&]
      A \ar[d,shift left=.5] \ar[d,shift right=.5] \ar[d,shift left=1.5] \ar[d, shift right=1.5] \ar[drr] \&
      H \ar[dr] \ar[dr,shift left] \ar[dr,shift right] \ar[dl,shift left] \ar[dl,shift right] \&
      E \ar[d,shift left] \ar[d,shift right] \&
      L \ar[dr] \ar[dlll] \ar[dl] \&
      R \ar[d] \ar[dllll] \ar[dll] \&
      T_2 \ar[dlll] \ar[dlll,shift left] \ar[dlll,shift right] \&
      I_2 \ar[dllll] \\
      T_1 \ar[drr] \ar[drr,shift left] \ar[drr,shift right] \& \& 
      C_2 \ar[d,shift left] \ar[d,shift right] \& \&
      I_1 \ar[dll] \\
      \& \& C_1 \ar[d,shift left] \ar[d,shift right] \\
      \& \& C_0
    \end{tikzcd}
  \]

  Here $C_0,C_1,C_2$ are the sorts of objects, 1-cells, and 2-cells.
  The relations $T_2$, $I_2$, and $E$, with their axioms, make the 1-cells and 2-cells into hom-categories.
  The type $T_1$, which depends on a triangle of elements of $C_1$, represents composition: $t:T_1(f,g,h)$ is a ``reason why'' $g\circ f$ is equal to $h$.
  In general this is not a proposition, so composition is an anafunctor; thus (again following Makkai) we are actually representing ``anabicategories''.
  Similarly, $I_1(f)$ is the type of witnesses that $f$ is an identity 1-cell.
  The relation $A$ specifies the associativity isomorphisms: given $t_{12} : T_1(f_1,f_2,f_{12})$ and $t_{123} : T_1(f_{12},f_3,f_{123})$ and also $t_{23}:T_1(f_2,f_3,f_{23})$ and $t_{123}' : T_1(f_1,f_{23},f_{123}')$, the relation $A$ specifies a 2-cell in $C_2(f_{123},f_{123}')$ that plays the role of the associativity isomorphism.
  Similarly, $L$ and $R$ specify the left and right unit isomorphisms.
  Finally, the relation $H$ specifies the ``horizontal'' composite of two 2-cells along an object, given witnesses for how to compose their domains and codomains.

  Note that if we drop the sorts $I_1,I_2,L,R$ relating to identities, the resulting inverse category is a truncation of the coface maps in Joyal's category $\Theta_2$~\cite{joyal:disks}, which can be used to define non-algebraic notions of 2-category and $(\infty,2)$-category~\cite{rezk:theta,ara:nqcats}.
  The identity sorts are a ``fattening'' of this to incorporate the degeneracies while remaining inverse, as done for the simplex category in~\cite{kock:weakids}.
  
  We assert as an axiom that $E$ is a congrence for all the top-rank relations, so univalence at $C_2$ makes it consist of sets with equality $E$.
  As was the case for $FO$ in \Cref{eg:anafunctors}, univalence at $T_1$ means that two witnesses of composition are equal if they induce all the same horizontal composites of 2-cells and the same associators, and similarly for $I_1$.
  Thus, an indiscernibility between two elements of $C_1$ is a 2-cell isomorphism together with transport functions acting on $T_1$ and $I_1$.
  So if an anabicategory is univalent, then its hom-categories are individually univalent in the ordinary sense if and only if it is a saturated  anabicategory (which, as before, can be imposed axiomatically as a pure existence condition).
  Finally, a ``two-sided bicategorical Yoneda lemma'' implies that indiscernibilities in $C_1$ are equivalent to internal adjoint equivalences, so univalence at $C_0$ means that these are equivalent to identifications.

  The possible failure of saturation is due to the fact that we have two different ``shapes of 2-cells'': globes in $C_2$ and simplices in $T_1$.
  This can be avoided by building on a non-algebraic definition of bicategory involving only one shape of 2-cell, such as a semisimplicial version of the Street--Duskin~\cite{street:orientals,duskin:bicatnerve} simplicial nerve, or the opetopic nerve~\cite{bd:hda3,Mult1,cheng:opetopic-bicats}.

  A morphism of saturated anabicategories corresponds to a (pseudo) functor of bicategories.
  It is an equivalence of structures if the functor is a (strong) biequivalence, i.e., such that the maps on hom-types of all dimensions are split essentially surjective.
\end{example}
}

\begin{example}[Double (ana-bi)categories]
 A \emph{double category} is similar to a 2-category or bicategory, but has two families of 1-cells, called \emph{vertical} and \emph{horizontal}, respectively.
 The 2-cells take the shape of fillers for squares of 1-cells.
 Curiously, it is quite difficult to define a double category in which composition is weak in both directions.
 The closest approximation in the literature is the \emph{double bicategories} of Verity \cite[Definition~1.4.1]{verity-phd-tac}; in addition to squares, these have vertical and horizontal 2-cells of the usual ``globular'' shape, forming two separate bicategories with the same objects, together with operations by which the squares are acted on by the appropriate kind of globular 2-cells on all four sides.
 
 A suitable signature for double bicategories hence looks as follows:
   \[
    \begin{tikzcd}[ampersand replacement=\&, row sep = 40]
%       A \ar[d,shift left=.5] \ar[d,shift right=.5] \ar[d,shift left=1.5] \ar[d, shift right=1.5] \ar[drr] \&
%       H \ar[dr] \ar[dr,shift left] \ar[dr,shift right] \ar[dl,shift left] \ar[dl,shift right] \&
%       E \ar[d,shift left] \ar[d,shift right] \&
%       L \ar[dr] \ar[dlll] \ar[dl] \&
%       R \ar[d] \ar[dllll] \ar[dll] \&
%       T_2 \ar[dlll] \ar[dlll,shift left] \ar[dlll,shift right] \&
%       I_2 \ar[dllll] \\
% 
%       
      W_{H,T} \ar[d] \ar[drrrr,shift right] \ar[drrrr,shift left] \&
      W_{H,B} \ar[dl] \ar[drrr,shift right] \ar[drrr,shift left] \&
      I_H \ar[drr] \&
      T_H \ar[dr] \ar[dr, shift left] \ar[dr, shift right]\&
      E_S  \ar[d,shift right] \ar[d,shift left] \&
      T_V \ar[dl] \ar[dl, shift left] \ar[dl, shift right] \&
      I_V \ar [dll] \&
      W_{V,L} \ar[dr] \ar[dlll,shift right] \ar[dlll,shift left] \&
      W_{V,R} \ar[d] \ar[dllll,shift right] \ar[dllll,shift left]
      \\
%       T_{H,1} \ar[drr] \ar[drr,shift left] \ar[drr,shift right] \& 
%       I_{H,1} \ar[dr] \& 
      C_{H,2} \ar[d,shift left] \ar[d,shift right] 
      \& \&      
      \& \&
      S \ar[dllll,shift left] \ar[dllll,shift right] \ar[drrrr,shift left] \ar[drrrr,shift right] 
      \& \&
      \& \&
      C_{V,2} \ar[d,shift left] \ar[d,shift right]
%       \&
%       I_{V,1} \ar[dl] \& 
%       T_{V,1} \ar[dll] \ar[dll,shift left] \ar[dll,shift right] 
      \\
      C_{H,1} \ar[drrrr,shift left] \ar[drrrr,shift right] 
      \& \& \& \&
      \& \&
      \& \&
      C_{V,1} \ar[dllll,shift left] \ar[dllll,shift right]
      \\
      \& \& \& \& C_0
    \end{tikzcd}
  \]
  where we omit the bicategorical structure on $C_{X,2} \rightrightarrows C_{X,1} \rightrightarrows C_0$ for $X = H,V$ (see \Cref{eg:bicats}) for readability.
  Intuitively, an element $\lambda : S_{w,x,y,z}(f, g, h, k)$ can be pictured as a filler 
  \[
   \begin{tikzcd}[ampersand replacement = \&]
           z \ar[d,"h"'] \ar[r, "k"] 
           \ar[dr, phantom, "\lambda"]
           \&  
           x \ar[d, "f"]
           \\
           y \ar[r, "g"'] \& w
   \end{tikzcd}
  \]
  and the vertical action $(W_{V,R})_{w,x,y,z,f,g,h,k,f'}(\alpha, \lambda, \lambda')$ attaches a vertical 2-cell $\alpha : f \Rightarrow f'$ on the right of $\lambda$ to yield a filler $\lambda'$ of a square of 1-cells $f',g,h,k$.
  Similarly, we have vertical action on the left ($W_{V,L}$) and horizontal action on the top and bottom.
  As usual, these relations are asserted to be functional.
  Squares can be composed vertically ($T_V$) and horizontally ($T_H$), and we have identities $I_V$ and $I_H$ for these compositions.
  We assert that the equalities (not pictured) on vertical and horizontal 2-cells, as well as the equality $E_S$ on squares $S$, are congruences with respect to these operations.
  
  Univalence at $S$ says that $S$ is pointwise a set with equality given by $E_S$.
  Given two vertical 1-cells $f, f' : C_{V,1}(x,w)$, an indiscernibility between them is given by an isomorphism $\phi : f \cong f'$ in the underlying vertical bicategory together with a transport function for $S$, e.g., $\transfun \phi : S_{w,x,y,z}(f,g,h,k) \to S_{w,x,y,z}(f',g,h,k)$. 
  But coherence with respect to $W_{V,R}$ says that this transport function is given by action with the 1-isomorphism $\phi$, and similar for the other variables.
  
  Given $a, b : \C_0$, an indiscernibility $a \fiso b$ consists of a pair of a horizontal adjoint equivalence $\phi_H : a \simeq_H b$ and a vertical adjoint equivalence $\phi_V : a \simeq_V b$ together with transport functions for the sort $S$ that are coherent with respect to the top-level sorts.
  In particular, we have a transport function $S_{a,a,a,a}(1,1,1,1) \cong S_{b,a,a,a}(\phi_V,\phi_H,1,1)$; call $\Psi : S_{b,a,a,a}(\phi_V,\phi_H,1,1)$ the image of the identity filler under this isomorphism.
  Analogously, we have a transport function $S_{b,b,b,b}(1,1,1,1) \cong S_{b,b,b,a}(1,1,\phi_V,\phi_H)$; call $\Phi : S_{b,b,b,a}(1,1,\phi_V,\phi_H)$ the image of the identity filler under this isomorphism.
  The coherence laws for the top-sorts then entail that all the other transport functions are fully determined by the choice of $\Psi$ and $\Phi$, and that $\Psi$ and $\Phi$ compose with each other along $\phi_V$ and $\phi_H$ to identities.
  In summary, an indiscernibility in $C_0$ consists of a quadruple $(\phi_H,\phi_V,\Psi, \Phi)$ with these properties, known as an (invertible) \emph{companion pair} (see, e.g., \cite[\S1.2]{CTGDC_2004__45_3_193_0}).
  A morphism of structures is exactly a \emph{horizontal map} as defined in \cite[Definition~1.4.7]{verity-phd-tac}.
\end{example}

\begin{example}[Thunk-force categories]
  A \emph{thunk-force category} or \emph{abstract Kleisli category}~\cite{fuhrmann:abstract-kleisli} is a category $\D$ equipped with
  \begin{itemize}
  \item A functor $L:\D\to\D$,
  \item A natural transformation $\varepsilon : L \to 1_\D$, and
  \item An unnatural transformation $\vartheta : 1_\D \to L$,
  \end{itemize}
  such that $(L,\vartheta L, \varepsilon)$ is a comonad (so that in particular $\vartheta L : L \to L L$ \emph{is} a natural transformation) and each $\vartheta_x : x \to L x$ equips $x$ with the structure of an $L$-coalgebra.
  If $\D$ is the non-univalent Kleisli category $\C_T$ for a monad $T$, with corresponding adjunction $F : \C \rightleftarrows \D : U$ such that $F$ is the identity on objects, we can give it this structure where $(L,\vartheta L, \varepsilon) = (F U, F \eta U, \varepsilon)$ is the comonad induced by the adjunction and $\vartheta_x : \C_T(x,F U x) = \C(x,TTx)$ is the composite $\eta_{Tx} \circ \eta_x$.

  A signature for thunk-force categories is
  \[
    \xymatrix{ 
      T \ar[dr] \ar@<1mm>[dr] \ar@<-1mm>[dr] & I \ar[d] & E \ar@<.5mm>[dl] \ar@<-.5mm>[dl] &
      L_A \ar@<1mm>[d] \ar@<-1mm>[d]
      \ar@<.5mm>[dll] \ar@<-.5mm>[dll] & H \ar[dl] \ar[dlll] &
      N \ar[dll] \ar[dllll] \\
      & A \ar@<1mm>[d]^{c} \ar@<-1mm>[d]_{d} & & L_O \ar@<.5mm>[dll] \ar@<-.5mm>[dll] \\
      & O}
  \]
  where $N$ and $H$ represent $\varepsilon$ and $\vartheta$ respectively.
  The axioms are straightforward to formulate, and we find that the indiscernibilities in $O$ are the isomorphisms on which $\vartheta$ is natural.
  In general, morphisms (not necessarily isomorphisms) on which $\vartheta$ is natural (that is, morphisms that are $L$-coalgebra maps) are called \emph{thunkable}.
  As shown in~\cite{fuhrmann:abstract-kleisli} they form a (non-full, but wide) subcategory that can be equipped with a monad whose Kleisli category is the given thunk-force category.
  (Indeed, they are the full subcategory of the Eilenberg-Moore category of the comonad $(L,\vartheta L, \varepsilon)$ on the objects $(x,\vartheta_x)$.)
  In this sense a thunk-force category is precisely ``what is left of a Kleisli category when we forget the underlying category''.

  In a non-univalent Kleisli category $\C_T$, the functor $F:\C\to\C_T$ lands inside the thunkable morphisms.
  Thus, if $\C$ is a univalent category, then $\C_T$ is univalent as a thunk-force category just when every thunkable isomorphism in $\C_T$ is the $F$-image of a unique isomorphism in $\C$.
  This is the case for any monad such that
  \[
    \begin{tikzcd}
      x \ar[r,"\eta"] & T x \ar[r,shift left,"T \eta"] \ar[r,shift right,"\eta T"'] & T T x
    \end{tikzcd}
  \]
  is an equalizer diagram, which happens frequently but not always.
  For instance, the trivial monad on $\Set$ defined by $T x = 1$ admits a thunkable isomorphism $0 \cong 1$ in $\Set_T$, but there is no isomorphism $0\cong 1$ in $\Set$.

  The opposite of a thunk-force structure is called a \emph{runnable monad} (thus a thunk-force structure could also be called a ``corunnable comonad''), and the duals of thunkable morphisms are called \emph{linear}.
\end{example}

Thunk-force categories are used to model call-by-value programming languages, while runnable monads are used for call-by-name languages.
Since real-world programming languages allow functions to take more than one argument, these structures generally need to be enhanced with some kind of product; but in the presence of computational effects this is something weaker than a monoidal structure.

\begin{example}[Premonoidal categories]\label{eg:premonoidal}
  A \emph{premonoidal category}~\cite{pr:premonoidal,power:premonoidal} is like a monoidal category, but the tensor product operation is only required to be functorial in each variable separately, rather than jointly.
  That is, for objects $x,y$ we have a tensor product object $x\otimes y$, and for any $f:x\to x'$ we have $f\otimes y : x\otimes y \to x'\otimes y$ and for $g:y\to y'$ we have $x\otimes g : x\otimes y \to x\otimes y'$, but there is no ``$f\otimes g : x\otimes y \to x'\otimes y'$'', and the square
  \[
    \begin{tikzcd}
      x\otimes y \ar[r,"x\otimes g"] \ar[d,"f\otimes y"'] & x\otimes y' \ar[d,"f\otimes y'"] \\
      x'\otimes y \ar[r,"x'\otimes g"'] & x'\otimes y'
    \end{tikzcd}
  \]
  need not commute.
  If for some $f$ this square does commute for all $g$, and a dual condition holds with $f$ on the right, we say that $f$ is \emph{central}.
  The associativity and unit isomorphisms in a premonoidal category are additionally asserted to be central; note that naturality of the associator has to be formulated as three different axioms relative to morphisms in the three possible places.

  If $T$ is a strong and costrong monad on a monoidal category $\C$, then its Kleisli category $\C_T$ is premonoidal: its tensor product is that of $\C$, while for $f\otimes y$ is the composite $x\otimes y \xrightarrow{f \otimes y} T x' \otimes y \to T(x'\otimes y)$ with the strength, and dually for $x\otimes g$.

  We can obtain a signature for premonoidal categories by splitting the sort $\otimes_A$ of a monoidal category in two, for the two functors $x\otimes -$ and $-\otimes y$:
  \[
    \xymatrix{ 
      T \ar[drr] \ar@<1mm>[drr] \ar@<-1mm>[drr] & I \ar[dr] & E \ar@<.5mm>[d] \ar@<-.5mm>[d] &
      \otimes_{A,L} \ar@<.5mm>[dr] \ar@<-.5mm>[dr] \ar@<.5mm>[dl] \ar@<-.5mm>[dl] &
      \otimes_{A,R} \ar@<.5mm>[d] \ar@<-.5mm>[d] \ar@<.5mm>[dll] \ar@<-.5mm>[dll] &
      \otimes_3 \ar@<.5mm>[dl] \ar@<-.5mm>[dl] \ar@<1.5mm>[dl] \ar@<-1.5mm>[dl] \ar[dlll] &
      U_A \ar@<-.5mm>[d] \ar@<.5mm>[d] \ar[dllll] &
      \otimes_l \ar[dlll] \ar[dl] \ar[dlllll] &
      \otimes_r \ar[dllll] \ar[dll] \ar[dllllll] \\
      & & A \ar@<1mm>[d]^{c} \ar@<-1mm>[d]_{d} &&
      \otimes_O \ar@<1mm>[dll] \ar@<-1mm>[dll] \ar[dll] & &
      U_O \ar[dllll] \\
      & & O}
  \]
  That is, for $w:\otimes_O(x,y,z)$ and $w':\otimes_O(x',y,z')$ with $f:A(x,x')$ and $h:A(z,z')$, the relation $\otimes_{A,L}(w,w',f,h)$ says that $f\otimes y=h$ relative to $w$ and $w'$, and similarly for $\otimes_{A,R}$.
  We assert the usual axioms of a premonoidal category, including unique existence of an $h$ as in the previous sentence, which we denote $f\otimes^L_{w,w'} y$; similarly we have $x\otimes^R_{w,w'} g$ for $g:A(y,y')$.
  We define a morphism $f:A(x_1,x_2)$ to be \emph{central} if for any $g:A(y_1,y_2)$ and \emph{any} $w_{ij}:\otimes_O(x_i,y_j,z_{ij})$ (for $i,j\in \{1,2\}$) the following square commutes:
  \[
    \begin{tikzcd}[column sep=huge]
      z_{11} \ar[r,"x_1\otimes^R_{w_{11},w_{12}} g"] \ar[d,"f\otimes^L_{w_{11},w_{21}} y_1"'] & z_{12} \ar[d,"f\otimes^L_{w_{12},w_{22}} y_2"] \\
      z_{21} \ar[r,"x_2\otimes^R_{w_{21},w_{22}} g"'] & z_{22},
    \end{tikzcd}
  \]
  as well as a dual property on the other side.
  However, the naturality of the isomorphisms between any two values of an anafunctor means that it suffices if this holds for \emph{some} $w_{ij}$.
  Recall also that the axioms of a premonoidal category include centrality of the associator and unit isomorphisms.

  We also assert that for any $w:\otimes_O(x,y,z)$ and $w':\otimes_O(x,y,z')$, we have $1_x \otimes^L_{w,w'} y = x \otimes^R _{w,w'} 1_y$.
  In other words, if we have two values of $x\otimes y$, the canonical isomorphisms between them obtained from the two anafunctors $x\otimes -$ and $-\otimes y$ coincide, giving a morphism that we denote $1_x \otimes_{w,w'}  1_y$.
  This is an ``anafunctorial'' version of the standard condition that the two functors are ``equal on objects''.
  In particular, it is necessary to prove that \emph{identity morphisms are central}: for any $x:O$ and $g:A(y_1,y_2)$ with $w_j:\otimes_O(x,y_j,z_j)$ the following squares are equal:
  \[
    \begin{tikzcd}[column sep=huge]
      z_{1} \ar[r,"x\otimes^R_{w_{1},w_{2}} g"] \ar[d,"1_x \otimes^L_{w_{1},w_{1}} y_1"'] & z_{2} \ar[d,"1_x \otimes^L_{w_{2},w_{2}} y_2"] \\
      z_{1} \ar[r,"x\otimes^R_{w_{1},w_{2}} g"'] & z_{2}
    \end{tikzcd}
    \qquad
    \begin{tikzcd}[column sep=huge]
      z_{1} \ar[r,"x\otimes^R_{w_{1},w_{2}} g"] \ar[d,"x \otimes^R_{w_{1},w_{1}} 1_{y_1}"'] & z_{2} \ar[d,"x \otimes^R_{w_{2},w_{2}} 1_{y_2}"] \\
      z_{1} \ar[r,"x\otimes^R_{w_{1},w_{2}} g"'] & z_{2}
    \end{tikzcd}
  \]
  and the right-hand square commutes by functoriality of $\otimes^R$.

  We can also show that if $g:A(y_1,y_2)$ is central, then so is any $x\otimes_{w_1,w_2}^R g$.
  For if we have $h:A(u_1,u_2)$ with appropriate witnesses of the tensor product, we can form the following diagram:
  \[
    \begin{tikzcd}[row sep=huge, column sep=huge]
      {} \ar[rrr,"(x\otimes y_1) \otimes^R h"] \ar[ddd,"(x\otimes^R g) \otimes^L u_1"'] \ar[dr] & & &
      {} \ar[ddd,"(x\otimes^R g) \otimes^L u_2"] \ar[dl] \\
      & {} \ar[r,"x\otimes ^R (y_1\otimes^R h)"] \ar[d,"x\otimes^R (g \otimes^L u_1)" description] &
      {} \ar[d,"x\otimes^R (g\otimes^L u_2)" description] \\
      & {} \ar[r,"x\otimes^R (y_2\otimes^R h)"'] & {} \\
      {} \ar[ur] \ar[rrr,"(x\otimes y_2) \otimes^R h"'] & & & {} \ar[ul]
    \end{tikzcd}
  \]
  Here the inner square commutes by centrality of $g$ and functoriality of $x\otimes^R -$, while the diagonal arrows are components of the associativity isomorphism and the trapezoids commute by naturality.
  Therefore, the outer square commutes.
  Together with a similar argument on the other side, this implies that $x\otimes_{w_1,w_2}^R g$ is central when $g$ is.
  Similarly, $f\otimes^L y$ is central as soon as $f$ is.

  In particular, it follows that the isomorphism $1_x \otimes_{w,w'} 1_y$ between any two values of $x\otimes y$ is central.
  Therefore, the existential saturation condition must be similarly restricted: we assert that given $w:\otimes_O(x,y,z)$ and a \emph{central} $h:z\cong z'$, there exists a $w':\otimes_O(x,y,z')$ such that $1_x \otimes_{w,w'} 1_y = h$.
  % Note that the identity axiom of the anafunctors $x\otimes -$ and $-\otimes y$ say that $1_x \otimes_{w,w} 1_y = 1_z$ for any $w:\otimes_O(x,y,z)$.

  As usual, $E$ is required to be a congruence for all rank-2 relations, so that univalence at $A$ means it is a set with $E$ as equality, and univalence at $U_O$ means it is a saturated ana-object.
  Now consider $w_1,w_2:\otimes_O(x,y,z)$; the indiscernibility type $w_1\fiso w_2$ is the proposition that $w_1$ and $w_2$ act the same on all arrows on both sides (the dependency of the natural transformations is automatically transportable by naturality).
  In other words, it says that $g \otimes^L_{w_1,w'} y = g\otimes^L_{w_2,w'} y$ for any $w':\otimes_O(x',y,z')$ and $g:A(x,x')$, and similarly on the other side.
  As for ordinary anafunctors, by functoriality this is equivalent to its special case $1_x \otimes_{w_1,w_2} 1_y = 1_z$.
  Thus, univalence at $\otimes_O$ means that if $1_x \otimes_{w_1,w_2} 1_y = 1_z$ then $w_1=w_2$, hence that the $w'$ asserted to exist in the existential saturation axiom is unique.

  Finally, an indiscernibility $x_1 \fiso x_2$ in $O$ consists of an isomorphism $\phi:x_1\cong x_2$ together with equivalences such as $\phi_{\bullet y} : \otimes_O(x_1,y,z) \simeq \otimes_O(x_2,y,z)$ and so on for the other holes, which respect all the rank-2 relations.
  Respect for $\otimes_{A,L}$ implies in particular that for $w:\otimes_O(x_1,y,z)$ we have $\phi \otimes^L_{w,\phi_{\bullet y}(w)} y = 1_{x_2} \otimes^L_{\phi_{\bullet y}(w),\phi_{\bullet y}(w)} 1_y = 1_{z}$.
  Now respect for $\otimes_{A,R}$ implies that for any $w_1:\otimes_O(x_1,y_1,z_1)$ and $w_2:\otimes_O(x_1,y_2,z_2)$ with $g:A(y_1,y_2)$ we have $\otimes_{A,R}(w_1,w_2,g,h) \leftrightarrow \otimes_{A,R}(\phi_{\bullet y_1}(w_1),\phi_{\bullet y_2}(w_2),g,h)$, or equivalently $x_1 \otimes^L_{w_1,w_2} g = x_2 \otimes^R_{\phi_{\bullet y_1}(w_1),\phi_{\bullet y_2}(w_2)} g$.
  But since $\phi\otimes^L_{w_1,\phi_{\bullet y_1}(w_1)} y_1 = 1_{z_1}$ and $\phi\otimes^L_{w_2,\phi_{\bullet y_2}(w_2)} y_2 = 1_{z_2}$, this implies that the following square commutes, since its vertical arrows are identities and its horizontal arrows are equal:
  \[
    \begin{tikzcd}[row sep=large,column sep=huge]
      {} \ar[d,"\phi\otimes^L_{w_1,\phi_{\bullet y_1}(w_1)} y_1"'] \ar[r,"x_1 \otimes^L_{w_1,w_2} g"] &
      {} \ar[d,"\phi\otimes^L_{w_2,\phi_{\bullet y_2}(w_2)} y_2"] \\
      {} \ar[r,"x_2 \otimes^R_{\phi_{\bullet y_1}(w_1),\phi_{\bullet y_2}(w_2)} g"'] & {}.
    \end{tikzcd}
  \]
  Together with a similar argument on the other side, this implies that $\phi:x_1\cong x_2$ is necessarily central.

  From here the usual sort of arguments imply that the rest of the structure of an indiscernibility (such as the equivalences $\phi_{\bullet y}$ used above) is uniquely determined by saturation applied to $\phi$.
  This is perhaps least obvious in the case of the equivalences $\phi_{\bullet\bullet} : \otimes_O(x_1,x_1,z) \simeq \otimes_O(x_2,x_2,z)$, since $\otimes$ is not jointly functorial in its arguments.
  But once we have shown that $\phi_{x_1\bullet}$ and $\phi_{\bullet x_2}$ are uniquely determined, respect for $\otimes_{A,R}$ tells us that for any $w:\otimes_O(x_1,x_1,z)$ we have $\otimes_{A,R}(w,\phi_{x_1\bullet}(w),\phi,h) \leftrightarrow \otimes_{A,R}(\phi_{\bullet\bullet}(w),\phi_{\bullet x_2}(\phi_{x_1\bullet}(w)),1_{x_2},h)$, which uniquely determines $\phi_{\bullet\bullet}(w)$ by saturation.

  Thus $x_1 \fiso x_2$ is equivalent to the type of central isomorphisms $x_1\cong x_2$, and so in a univalent premonoidal category $x_1=x_2$ is also equivalent to this type.
  In particular, since as we noted above the values of the tensor product are also determined uniquely up to unique central isomorphism, the type of such values is contractible, so we obtain an actual function $\otimes : O\to O \to O$ as we would hope.

  A morphism of structures is a functor that preserves the tensor product up to a specified central isomorphism that is natural in both possible ways and commutes with the associativity and unit isomorphisms.
  Note that if its domain is not univalent, such a functor need not preserve centrality; a counterexample can be found in~\cite[Section 5.2]{sl:univprop-impure}.
  Thus, we have a ``real-world'' example where morphisms of structures need not preserve indiscernibility.
  In particular, if there is a ``univalent completion'' operation for premonoidal categories, there will be morphisms between non-univalent structures that do not extend to their univalent completions.

  Similarly to the situation for thunk-force categories, a Kleisli category $\C_T$ is univalent as a premonoidal category just when every central isomorphism in $\C_T$ is the image of a unique isomorphism in $\C$.
  This fails, for instance, when $T$ is a commutative monad, in which case every morphism is central but not every isomorphism of free algebras is in the image of the free functor (e.g., the nontrivial automorphism of the free abelian group on one generator).
\end{example}

\begin{example}[Duploids]
  A \emph{duploid}~\cite{munchmaccagnoni:thesis} is a structure that combines call-by-value structure (such as in a thunk-force category) and call-by-name structure (such as in its dual, a runnable monad) in one.
  It starts with a \emph{pre-duploid}, which is almost like a category equipped with a map to the chaotic category on two objects $\{+,-\}$, except that the associativity law $(h \circ g) \circ f = h\circ (g\circ f)$ need only hold if either the codomain of $f$ (i.e., the domain of $g$) lies over $-$ (``is negative'') or the codomain of $g$ (i.e., the domain of $h$) lies over $+$ (``is positive'').
  A signature for pre-duploids is as follows:
  \[
    \begin{tikzcd}[column sep=small]
      I_P \ar[dr] &
      T_{PPP} \ar[d] \ar[d,shift left] \ar[d,shift right] &
      T_{PPN} \ar[dl] \ar[dr,shift left] \ar[dr,shift right] &
      T_{PNP} \ar[dll] \ar[d] \ar[drrr] &
      T_{NPP} \ar[dlll] \ar[drr,shift left] \ar[drr,shift right] &
      T_{PNN} \ar[dll,shift left] \ar[dll,shift right] \ar[drrr] &
      T_{NPN} \ar[dlll] \ar[d] \ar[drr] &
      T_{NNP} \ar[dl,shift left] \ar[dl,shift right] \ar[dr] &
      T_{NNN}  \ar[d] \ar[d,shift left] \ar[d,shift right] &
      I_N \ar[dl] \\
      & A_{PP} \ar[d,shift left] \ar[d,shift right] &&
      A_{PN} \ar[dll] \ar[drrrrr] &&&
      A_{NP} \ar[dlllll] \ar[drr] &&
      A_{NN} \ar[d,shift left] \ar[d,shift right] \\
      & O_P &&&&&&& O_N
    \end{tikzcd}
  \]
  plus equality congruences on all four sorts $A_{\bullet\bullet}$ that we have omitted to write, which thus coincide with the indiscernibilities on those sorts.

  Since the positive objects form a category in their own right, an indiscernibility $p_1\fiso p_2$ between $p_1,p_2:O_P$ consists in particular of an isomorphism $\phi:p_1\cong p_2$ in that category, together with equivalences $\phi_{\bullet n} : A_{PN}(p_1,n) \simeq A_{PN}(p_2,n)$ and $\phi_{n\bullet} : A_{NP}(n,p_1) \simeq A_{NP}(n,p_2)$ respecting composition of all sorts.
  The usual arguments imply that $\phi_{\bullet n}$ and $\phi_{n\bullet}$ are given by composition with $\phi$ or its inverse, so it remains to consider respect for composition.
  This includes, for instance, $T_{PNN}(g\circ \phi,h,k\circ \phi) \leftrightarrow T_{PNN}(g,h,k)$, which is to say that $h\circ (g\circ \phi) = (h\circ g) \circ \phi$ for all $g:A_{PN}(p_2,n_1)$ and $h:A_{NN}(n_1,n_2)$; and similarly for $T_{PNP}$.
  That is, the associativity law that isn't generally asserted in a pre-duploid does hold when $\phi$ is the first morphism.
  In the context of a pre-duploid, this is taken as the definition of when a morphism is \emph{thunkable}.
  The remaining conditions are automatic, so the indiscernibilities between positive objects are precisely the thunkable isomorphisms.
  Dually, the indiscernibilities between negative objects are precisely the \emph{linear} isomorphisms: those for which the missing associativities hold when they are the last morphism in the triple composite.

  A duploid is a pre-duploid together with ``parity shift'' functions $\Uparrow$ taking positive objects to negative ones and $\Downarrow$ taking negative objects to positive ones, together with unnatural families of linear isomorphisms $\mathsf{force}:\Uparrow p \cong p$, for positive $p$, and thunkable isomorphisms $\mathsf{wrap}:n \cong \Downarrow n$, for negative $n$.
  However, it turns out that $\Uparrow$ and $\Downarrow$ can in fact be made into functors, and $\mathsf{force}$ and $\mathsf{wrap}$ natural.
  Thus, this additional structure does not change the notions of indiscernibility or univalence.
\end{example}

\shortorlong{
\noindent
Further examples of higher structures that can be specified via a suitable signature are:
\begin{itemize}
 \item a category with a presheaf on it;
 \item two categories and a(n ana)functor between them;
 \item two categories and a profunctor between them;
 \item a displayed category (or fibration) over some base.
\end{itemize}
These examples are explained in detail in the extended version \cite{hsip_arxiv} of this article.
}
{}

\begin{example}[$T_0$-spaces]
  We end by sketching some signatures whose structures include topological spaces, showing that our abstract signatures are more general than FOLDS-signatures.
  Since a topology is a structure on one underlying set, it suffices to consider height-2 signatures with $\bottom{\L}\eqdef \onetype$, with $\derivdia{\L}{} : \U\to\U$ remaining to be specified.

  A first guess might be $\derivcat{\L}{M} \eqdef (M \to \PropU)$, so that an $\L$-structure would be a type $M$ with a predicate on its ``type of subsets'' $M \to \PropU$ representing ``is open''.
  Unfortunately, this is not a covariant s-functor.
  We can make it covariant via direct images (using propositional truncation), but this is not \emph{strictly} functorial, and moreover the resulting morphisms of structures would be open maps rather than continuous ones.

  Covariant strict functoriality does hold, however, for the double-powerset functor $M\mapsto ((M\to \PropU) \to \PropU)$, so we can use a definition of topological spaces that refers to sets of subsets instead of individual subsets.
  For instance, a topology is equivalent to a \emph{convergence} relation between filters (which are sets of subsets) and points, hence can be regarded as a particular $\L$-structure with
  \[\derivcat{\L}{M} \eqdef ((M\to \PropU) \to \PropU)\times M.\]
  The covariant functoriality specializes to the direct image of filters, so the $\L$-structure morphisms between topological spaces will be functions that preserve convergence, which is equivalent to continuity.
  Finally, univalence means that convergence is a proposition, that $M$ is a set, and that two points are identified if exactly the same filters converge to them; the latter is an equivalent way of saying the space is $T_0$.
  Of course, not every $\L$-structure is a topological space; we could hope to single out the spaces with a theory in an appropriate logic based on our signatures.

  We can also take $\derivcat{\L}{M} \eqdef ((M\to \PropU) \to \PropU)$ and associate a topological space $M$ to the set of all sets of subsets $\mathcal{T}$ such that all open subsets belong to $\mathcal{T}$.
  Once again the structure morphisms between topological spaces are the continuous maps, and the univalent such structures are the $T_0$-spaces.
  Other topological structures such as uniform spaces and proximity spaces can similarly be represented as structures over suitable height-2 signatures.
\end{example}

\section{Conclusion}\label{sec:conclusion}

% \subsection{Summary}
Using a relativized form of the \emph{identity of indiscernibles},
we defined a general notion of indiscernibility of objects in a categorical structure, yielding a notion of univalence for such structures.
These notions depend only on the shape of the structures as specified by the signature, not on any axioms they satisfy.
We then showed a Structure Identity Principle for univalent structures that specializes to known results for first-order logic and univalent 1-categories, as well as many other important examples.

Regarding the setting we have chosen for our work, it seems impossible to define a fully coherent notion of
signature without 2LTT.  A sufficiently-coherent
``wild'' notion might suffice for our particular results,
but further development of the theory may require the fully coherent version. In addition, 2LTT is
necessary for treating FOLDS-signatures of arbitrary height (cf.\ \shortorlong{\cite{hsip_arxiv}}{\cref{sec:folds-sigs}}).

Some goals for further work include:
\begin{itemize}
 \item Removing the splitness condition from \cref{thm:hsip}, as discussed at the end of \cref{sec:hsip}.
 \item Developing a completion operation for structures, i.e., a universal way to turn a structure into a univalent one, generalizing the Rezk completion for categories \cite[Section~8]{AKS13}.
 \item Formalizing the results presented here in a computer proof assistant implementing 2LTT.
\end{itemize}

\shortorlong{}
{
\section{Version History}

This article is an extended version of an article of the same title, published in 
LICS 2020 \cite{hsip_lics}.
Compared to that version, the present version contains
\begin{itemize}
 \item a variant of the HSIP for \emph{essentially} split-surjective equivalences (see \Cref{thm:hsip2});
 \item an appendix on FOLDS-signatures in 2LTT and a translation from FOLDS- to abstract signatures (see \Cref{sec:folds-sigs}); and
 \item more examples in \Cref{sec:examples}.
\end{itemize}
}

%% Acknowledgments
\begin{acks}                            %% acks environment is optional
                                        %% contents suppressed with 'anonymous'
  %% Commands \grantsponsor{<sponsorID>}{<name>}{<url>} and
  %% \grantnum[<url>]{<sponsorID>}{<number>} should be used to
  %% acknowledge financial support and will be used by metadata
  %% extraction tools.

  We thank Nicolai Kraus for helpful discussions on 2LTT, Paul Blain Levy for suggesting a connection to his work on contextual isomorphisms, and the anonymous referees for their constructive criticism.
  
  Ahrens and North acknowledge the support of the \grantsponsor{}{Centre for Advanced Study (CAS)}{} in Oslo, Norway, which funded and hosted the research project \emph{\grantnum{}{Homotopy Type Theory and Univalent Foundations}} during the 2018/19 academic year.
  
This work was partially funded by \grantsponsor{}{EPSRC}{https://epsrc.ukri.org/} under agreement number \grantnum{EP/T000252/1}{EP/T000252/1}.
  
This material is based
on research sponsored by \grantsponsor{id}{The United States Air Force Research
Laboratory}{https://www.wpafb.af.mil/AFRL/} under agreement number \grantnum{FA9550-15-1-0053}{FA9550-15-1-0053}, \grantnum{FA9550-16-1-0212}{FA9550-16-1-0212}, and \grantnum{FA9550-17-1-0363}{FA9550-17-1-0363}. The
U.S.\ Government is authorized to reproduce and distribute reprints
for Governmental purposes notwithstanding any copyright notation
thereon. The views and conclusions contained herein are those of
the author and should not be interpreted as necessarily
representing the official policies or endorsements, either
expressed or implied, of the United States Air Force Research
Laboratory, the U.S. Government, or Carnegie Mellon University.

  This material is based upon work supported by the
  \grantsponsor{GS100000001}{National Science
    Foundation}{http://dx.doi.org/10.13039/100000001} under Grant
  No.~\grantnum{DMS-1554092}{DMS-1554092}. 
  Any opinions, findings, and
  conclusions or recommendations expressed in this material are those
  of the author and do not necessarily reflect the views of the
  National Science Foundation.
\end{acks}

% Local Variables:
% TeX-master: "hsip-arxiv"
% End:

%% Appendix
\appendix

\section{FOLDS-Signatures and Translation to Abstract Signatures}
\label{sec:folds-sigs}

% The first notion of signature we consider is a mild adaptation of the signatures considered by Makkai \cite{MakSFAM} to our setting of 2LTT. 
% The crucial notion is that of an \emph{inverse semi-category}:
\begin{definition}
Let $p: \Nat^s$. An \defemph{inverse semi-category\footnote{Makkai uses honest categories, but any inverse category is freely generated by a semi-category, so it is simpler to leave out the identity morphisms.} $\L$ of height $p$} consists of
\begin{itemize}
\item A family of types of objects $\L : \Nat^s_{< p} \to \Ustrict$.
  We call $\L(n)$ the family of \emph{sorts of rank $n$}.
\item A family of types of morphisms
  \[\hom_\L : \tprd{n:\Nat^s_{< p}}{m:\Nat^s_{< n}} \L(n) \to \L(m) \to \Ustrict.\]
\item A suitably typed composition operation on morphisms
  \[ (\cdot) : \hom_\L (x,y) \to \hom_\L (y,z) \to \hom_\L (x,z) \]
  that is strictly associative: 
  $ f \cdot (g \cdot h) \steq (f \cdot g) \cdot h$.
\end{itemize}
% \begin{itemize}
% \item setting $m: \Nat^s_{< h},n: \Nat^s_{\leq m}, x : \L(m), y: \L(n) \vdash \hom_\L (y,x) \define \mathbf 0 :\U$ forms a strict semi-category $\Cat{\L}$.
% \end{itemize}
\end{definition}

The objects of $\L$ denote \emph{sorts} and the morphisms \emph{dependencies}. Morphisms can only go ``downwards'', that is, a sort can only depend on sorts ``below'' it; see \Cref{fig:signatures}.

%Next, the \emph{fan-out} is a technical tool gathering the dependencies of a sort:
\begin{definition}
  Given an inverse semi-category $\L$, the \defemph{fan\-out type of $K:\L(n)$ at $m<n$} is 
  \[ \fanout{K}{m} \define \sm{L:\L(m)} \hom_\L(K,L).\]
\end{definition}

%We define FOLDS-signatures as those inverse semi-cat\-egories that satisfy two conditions:
\begin{definition}\label{def:material-sig}
A \defemph{FOLDS-signature of height $p$} is an inverse semi-category $\L$ of height $p$ for which each $\L(n)$ is fibrant and 
each $\fanout{K}{m}$ is cofibrant. The type of FOLDS-signatures of height $p$ is denoted by $\IC(p)$.
\end{definition}

%The purpose of the first condition is clear; 

%The second condition is to ensure that the first condition is stable under \emph{derivation}, cf.\ \Cref{app:defn:derivcat}.

% \begin{figure}
% \centering
% \xymatrix{
% 3 &
%     I \ar[d]_{i}   & 
%     T \ar@/^5pt/[rd]^{t_0} \ar[rd]_*-<.5em>{^{t_1}} \ar@/_10pt/[rd]_{t_2} &
%     I \ar[d]^{i} &
%     T \ar@/^5pt/[rd]^{t_0} \ar[rd]_*-<.5em>{^{t_1}} \ar@/_10pt/[rd]_{t_2} & 
%     I \ar[d]^{i}  &
%     E \ar@/_/[ld]_{t} \ar@/^/[ld]^{s}
%     \\
% 2 &
%     A \ar@/^/[d]^{d} \ar@/_/[d]_{c}  & 
%       &
%     A \ar@/^/[d]^{d} \ar@/_/[d]_{c} &
%       &
%     A \ar@/_/[d]_{c} \ar@/^/[d]^{d}
%     \\
% 1 &
%     O & 
%       &
%     O &
%       &   
%     O
% }
% % \xymatrix{
% %  T \ar[dr]_*-<.8em>{^{t_1}} \ar@<1mm>[dr]^*-<.7em>{^{t_2}} \ar@<-2mm>[dr]_*-<.8em>{^{t_0}} & I \ar[d]_{i} & E \ar@<1mm>[dl] \ar@<-1mm>[dl] \\
% %   & A \ar@<1mm>[d]^{d} \ar@<-1mm>[d]_{c} \\
% %   & O}
% \caption{The signatures $\Lrg$ (left) and $\Lcat$ (right)}\label{fig:signatures}
% \end{figure}

\begin{examples}
  We denote the FOLDS-signatures shown in \Cref{fig:signatures} by $\Lrg$, $\Lcat$, and $\LcatE$ respectively.
  At each rank we have a finite-fibrant type of objects of that rank.
\end{examples}

%Next, we define \emph{morphisms of FOLDS-signatures}:
\begin{definition}
For $\L,\M: \IC(p)$, a \defemph{strict semi-func\-tor $F:\L\to \M$} consists of functions
\begin{itemize}
\item $F : \prd{n:\Nat^s_{<p}} \L(n) \to \M(n)$
\item $F: \prd{m: \Nat^s_{<p}}{n: \Nat^s_{< m}}{x : \L(m)}{y: \L(n)}$\shortorlong{\\ \phantom{x}\hfill}{} $ \hom_\L (x,y) \to \hom_\M (Fx,Fy)$
\end{itemize}
which strictly preserves composition.
\end{definition}

Note that we take the ranks of objects to be specified data, which are preserved strictly by strict semi-functors.
In particular, a strict semi-functor $F:\L\to\M$ induces a function $\fanoutfun{F}{K}{m} : \fanout{K}{m} \to  \fanout{FK}{m} $ for every $K: \L(n)$ and $m<n$. 
We sometimes write $F$ instead of $\fanoutfun{F}{K}{m}$ when no confusion can arise.

\begin{definition}
A strict semi-functor $F:\L\to \M$ is a \defemph{discrete opfibration} when all the functions \[\fanoutfun{F}{K}{m} : \fanout{K}{m} \to  \fanout{FK}{m} \] are isomorphisms.
Let $\hom_{\IC(p)}(\L,\M)$ denote the type of such discrete opfibrations.
\end{definition}

We can think of each $\fanout{K}{m}$ as the structure on which $K$ depends, and then discrete opfibrations are exactly those functors which preserve this dependency structure.

\begin{proposition}\label{prop:material-sigs-cat}
  The type $\IC(p)$, the types of morphisms $\hom_{\IC(p)}(\L,\M)$ given by discrete opfibrations, and the obvious composition and identity form an s-category.
\end{proposition}
\begin{proof}
Consider $\L,\M,\N: \IC(p)$ and $F:\hom_{\IC(p)}(\L,\M)$,  $F:\hom_{\IC(p)}(\M,\N)$. Let $(G \circ F)(n) \define G(n) \circ F(n)$ and $(G \circ F)_{x,y} \define G_{F(n)(x), F(n)(y)} \circ F_{x,y}$. It is clear that this forms a strict semi-functor. 
It forms a discrete opfibration since for every $n: \Nat^s_{< p},K: \L(n)$, $ G \circ F: \fanout{K}{n} \to  \fanout{GFK}{n}$ is the composition of the isomorphisms $F : \fanout{K}{n} \to  \fanout{FK}{n}$ and $G : \fanout{FK}{n} \to  \fanout{GFK}{n}$.

For any $\L \in \IC(p)$, there is an identity discrete opfibration $1_\L: \hom_{\IC(p)}(\L,\L)$ whose components are all identity functions.

The composition defined above is clearly associative and left and right unital.
\end{proof}

A FOLDS-signature $\L$ will be mapped to an abstract signature whose $\bottomlevel$ is $\L(0)$, so we define:
\begin{notation}
For $p > 0$, $\L: \IC(p)$, let $\bottom{\L}\define \L(0)$.
\end{notation}

Now we formally define the derivative operation on FOLDS-signatures discussed in \Cref{sec:signatures}.

% Next, we define the important operation of \emph{derivation}.
% The intuition is as follows: given a signature $\L$ and an assignment of a type to each formal sort (i.e., object) of $\bottom{\L}$, we obtain a new signature which is of height one less than the original one, but ``wider''.
% The reader might want to look at Examples~\ref{ex:deriv-empty}-\ref{ex:loopatob} before reading the definition:

\begin{definition}\label{app:defn:derivcat}
Consider $p >0$, $\L: \IC(p)$, and $M : \bottom{\L} \to \U$. The \defemph{derivative of $\L$ with respect to $M$} is the inverse semi-category $\derivcat{\L}{M}$ of height $p-1$ with objects and morphisms defined as follows:
\begin{itemize}
\item $\derivcat{\L}{M}(n) \define \sum_{K : \L(n+1)} \prd{F: \fanout{K}{0}}  M(\pi_1 F)$
\item %$n: \Nat^s_{<h-1}, m: \Nat^s_{< n}, (K_1,\alpha_1) : \derivcat{\L}{M}(n) , (K_2,\alpha_2) : \derivcat{\L}{M}(m)     \vdash$ \\ \ \ \ \$
  $\hom_{\derivcat{\L}{M}}((K_1, \alpha_1), (K_2, \alpha_2)) \define$ \shortorlong{\\ \phantom{x}\hfill}{} $\sm{f: \hom(K_1,K_2)} \prd{F: \fanout{K_2}{0}}\alpha_1 (F \circ f) \steq \alpha_2(F)$
\end{itemize}
where $\pi_1 : \fanout{K}{0} \to \bottom{\L}$ is the projection and $F \circ f$ denotes the 
function $\fanout{K_2}{0} \to \fanout{K_1}{0}$ given by precomposition.
\end{definition}

\begin{example}\label{ex:deriv-empty}
  If $p \converts 1$ then $\L_{>0}$ is empty.
  Thus, no matter what $M : \bottom\L \to \U$ we choose, $\derivcat{\L}{M}$ is the empty signature.
\end{example}

\begin{example}\label{ex:deriv-height-one}
  If $\L$ is a signature of height $2$, then it consists of two types of sorts $\L(0)$ and $\L(1)$ and a family of hom-types $\hom_\L: \L(1) \to \L(0)\to \Ustrict$.
  Then for any $M : \L(0) \to \U$, the signature $\derivcat{\L}{M}$ has height $1$ consisting of just a single type of sorts of rank $0$.
  Each such sort is, by definition, a sort $K:\L(1)$ in $\L$ of rank $1$ together with, for any $L:\L(0)$ and $g:\hom_{\L}(K,L)$, an element of $\MatK{M}{L}$.
\end{example}

\begin{example}\label{ex:loopatob}
We have $\bottom{\Lrg} \equiv \lbrace O \rbrace$. Let $M$ be (a function picking out) the two-element set $\lbrace a,b \rbrace$.
Then $\derivcat{(\Lrg)}{M}$ is the following signature, with four sorts of rank $0$ and two sorts of rank $1$:
 \[
 \xymatrix{
 1 & I(a,a) \ar[d]^{i} & & &I(b,b) \ar[d]^{i} \\
 0 & A(a,a) &A(a,b) &A(b,a) &A(b,b) 
 }
 \]
\end{example}

The extra conditions on FOLDS-signatures in \Cref{def:material-sig} ensure that these signatures are closed under derivation:

\begin{proposition}\label{prop:deriv-good}
For $p>0$, $\L: \IC(p)$, and $M : \bottom{\L} \to \U$, the inverse semi-category $\derivcat{\L}{M}$ is a FOLDS-signature.
\end{proposition}

\begin{proof}%[Proof of \Cref{prop:deriv-good}]
Since each $\fanout{K}{0}$ is cofibrant and each $M(\pi_1 F)$ is fibrant, we have that $\prd{F: \fanout{K}{0}} M(L)$ is fibrant. Since $\L(n+1)$ is fibrant, so is 
\[\sm{K : \L(n+1)} \prd{F: \fanout{K}{0}} M(\pi_1 F).\]

  Now consider $n: \Nat^s_{< p}$, $m: \Nat^s_{< n}$, and $(K,\alpha):\derivcat{\L}{M}(n)$, 
%  the \emph{fan-out} pretype
%    \[ \fanout{K}{m} \define \sum_{L:\L(m)} \hom_\L(K,L)\]
  We have
  \begin{align*}
    &\fanout{K,\alpha}{m}
    \\
    &\converts \sm{(L,\beta): \derivcat{\L}{M}(m)} \hom_{\derivcat{\L}{M}} (K, L)
    \\ 
   & \strictiso \sum_{\substack{L: \L(m+1) \\ \beta: \prd{\fanout{L}{0}} M(L) \\  f: \hom( K,L)}}
   \prd{(N,g): \fanout{L}{0} } \alpha (N, gf) \steq \beta(N, g) 
   \\
   & \strictiso \sum_{\substack{G: \fanout{K}{m+1} \\ \beta: \prd{\fanout{L}{0}}M(L)}}  
 \prd{(N,g): \fanout{L}{0} } \alpha (N, gf) \steq \beta(N, g)
   \\
      & \strictiso \sm{G: \fanout{K}{m+1}} 
  \onetype   \\
  & \strictiso \fanout{K}{m+1}
%   \\
%    (\derivcat{\L}{M_0})_{m} \derivcat{\L}{M_0}(\derivobj{K}{\mathbf{a}},\derivobj{A}{\mathbf{b}})\\
%    &\strictiso \sm{A:\L_{m+1}}{\mathbf{b}:\prd{(K_g,g) : \fanout{K}{0}} \MatK{M_0}{K_g}}{h : \L(K,A)} \prd{(K_g,g) : \fanout{K}{0}} a_{gh} \stricteq b_g\\
%    &\strictiso \sm{(A,h):\fanout{K}{m+1}}{\mathbf{b}:\prd{(K_g,g) : \fanout{K}{0}} \MatK{M_0}{K_g}} \mathbf{b} \stricteq \lambda (K_g,g). a_{gh}\\
%    &\strictiso \fanout{K}{m+1}.
  \end{align*}
  Here, we expand ${\derivcat{\L}{M}(m)}$ and $ \hom_{\derivcat{\L}{M}} (K, L)$ to get the first isomorphism. We rearrange pairs and use the definition of $\fanout{K}{m+1}$ to get the second isomorphism.
  To get the third, observe that 
  \[    \sm{\beta: \prd{\fanout{L}{0}} M(L)}  
   \prd{(N,g): \fanout{L}{0} } \alpha (N, gf) \steq \beta(N, g) 
 \]
 is isomorphic to $\onetype$.
 
Since $\fanout{K}{m+1}$ is cofibrant, so is $ \fanout{K,\alpha}{m}$.
\end{proof}

\begin{definition}
  \label{def:derivation_functorial_action}
 Consider $p : \Nat^s$, $\L, \M: \IC(p)$, $H: \hom_{\IC(p)} (\L,\M)$, $L: \bottom{\L} \to \U$, $M: \bottom{\M} \to \U$, and $h:\prd{K:\bottom{\L}} L K \to M \bottom{H} K$.
 
%  Let $H:\I\to\J$ be a rank-preserving discrete opfibration between signatures of the same height, and let $M_0 : \I_0 \to \U$ and $N_0 : \J_0 \to \U$ with a transformation $h:\prd{I:\I_0} M_0 I \to N_0 (H I)$.

  We define the functor $\derivcat{H}{h} : \derivcat{\L}{L} \to \derivcat{\M}{M}$  as follows.
  \begin{itemize}
  \item Consider an $n: \Nat^s_{< p-1}$ and a $(K, \alpha): \derivcat{\L}{L}(n)$, (so $\alpha : \prd{F:\fanout{0}{K}}L(\pi_1 F)$). We define $\derivcat{H}{h}(K, \alpha) : \derivcat{\M}{M}(n)$ to be $(H(K), \beta)$, where 
  \begin{align*}
     \beta(F) \eqdef & h_{\pi_1 (H^{-1}F)} \big(\alpha(H^{-1}F)\big) : M\bottom{H} (\pi_1 H^{-1}F) \steq M(\pi_1 F)
  \end{align*}

 for $F: \fanout{HK}{0}$. %, $H^{-1}F : \fanout{K}{0}$, 
%  A sort $(K,(a_g : M_0 K_g)_{g:K\to K_g})$ in $\derivcat{\I}{M_0}$ is sent to $(HK,(b_g:N_0 K_g)_{g : H K \to K_g})$, where $b_g \eqdef h_{\hat{K}_g}(a_{\hat{g}})$ with $\hat{g} : K \to \hat{K}_g$ the unique lift of $g$ along the discrete opfibration $H$.
  \item 
  Consider a morphism $(f,\phi):\hom_{\derivcat{\L}{L}}( (K_1,\alpha_1) , (K_2, \alpha_2))$. We define $\derivcat{H}{h}(f, \phi): \hom_{\derivcat{\M}{M}}( (HK_1,\beta_1) , (HK_2, \beta_2))$ to be $(Hf,\psi)$. To define $\psi$ on $F : \fanout{HK_2}{0}$, we must check that
  \[ h_{\pi_1 (H^{-1}(F \circ Hf)}(\alpha_1(H^{-1}(F \circ Hf))) \steq h_{\pi_1 (H^{-1}F)} (\alpha_2(H^{-1}F)).\]
  But we have $H^{-1}( F \circ Hf) \steq H^{-1}( F ) \circ f $ (since applying the isomorphism $H$ produces $F \circ Hf$ on both sides) and $\alpha_1(H^{-1}( F ) \circ f) \steq \alpha_2(H^{-1}( F ))$ by $\phi$.
    \end{itemize}
\end{definition}

\begin{lemma}\label{lem:der_fun}
We have that 
\begin{enumerate}
\item for $\L: \IC(p)$

\[\derivcat{(1_\L)}{\lambda x. 1_{Lx}} \steq 1_{\derivcat{\L}{L}} \]
  
\item 
  for $\L, \M, \N: \IC(p)$, $H: \hom_{\IC(p)} (\L,\M)$, \\
$I:\hom_{\IC(p)} (\M,\N)$,  $L: \bottom{\L} \to \U$, $M: \bottom{\M} \to \U$, $N: \bottom{\N} \to \U$, $h:\prd{x:\bottom{\L}} L x \to M \bottom{H} x$, and $i:\prd{x:\bottom{\M}} M x \to N \bottom{I} x$,
\[\derivcat{I}{i} \circ \derivcat{H}{h} \steq \derivcat{(I \circ H)}{i \circ h}\] 
where 
$i \circ h (x,\ell) \define i(\bottom{H}x, h(x,\ell))$.
\end{enumerate}
\end{lemma}
\begin{proof}
Note that the desired strict equalities are obvious on the first components of objects and morphisms in ${\derivcat{\L}{L}}$. The desired strict equalities on the second components of morphisms follow from UIP and function extensionality. Thus, we check the strict equalities just on the second components of objects.

To check the strict equality (1) on objects, observe that 
\begin{align*}
\pi_2 \derivcat{(1_\L)}{1_{L}}(K, \alpha) & \steq  \lambda F.  \lambda x. 1_{Lx} (\pi_1 (1_\L^{-1} F), \alpha(1_\L^{-1} F) \\
& \steq \alpha.
\end{align*}

To check the strict equality (2) on objects, calculate that 
\begin{align*}
&\left( \pi_2 \derivcat{(I \circ H)}{i \circ h}(K, \alpha) \right) (F)
\\
& \steq   ih(\pi_1 ((IH)^{-1}F), \alpha((IH)^{-1}F))
  \\ & \steq  i(\bottom{H} \pi_1 ((IH)^{-1}F) , h(\pi_1 ((IH)^{-1}F), \alpha((IH)^{-1}F)))
\\ 
& \steq i(\pi_1 (I^{-1}F), h(\pi_1 (H^{-1}I^{-1}F), \alpha(H^{-1}I^{-1}F)))
\\
& \steq \left( \pi_2 (\derivcat{I}{i} \circ \derivcat{H}{h})(K, \alpha) \right) (F).\qedhere
\end{align*}
\end{proof}

\begin{lemma}\label{lem:Uisfib}
Consider $\L: \IC(p)$ and $M: \bottom{\L} \to \U$.
Let $\L_{>0}$ be the inverse semi-category given by $\L_{>0}(n) \define \L_{>0}(n+1)$ and 
$\hom_{\L_{>0}}(x,y) \define \hom_{\L}(x,y)$.

The evident forgetful functor $U: \derivcat{\L}{M}\to \L_{>0}$ is a discrete opfibration.
\end{lemma}
\begin{proof}
Consider a $(K,\alpha):\derivcat{\L}{M}$ where $K: \L(n+1)$, $\alpha: \prd{F: \fanout{K}{0}} M(\pi_1 F)$ and a $(L,f): \fanout{K}{m}$ where $L: \L(m)$, $f: \hom(K,L)$. Let $U^{-1} (L,f)$ be $((L,\beta),(f,\gamma))$ where we define $\beta: \prd{F: \fanout{L}{0}} M(\pi_1 F)$ and $\gamma: \prd{F: \fanout{L}{0}} \alpha(F \circ f) \steq \beta (F)$ as follows. Let $\beta(F) \define \alpha(F \circ f)$. Then we have $\gamma$ by construction.

Clearly, $U U^{-1} \steq 1_{\fanout{K}{m}}$. To show \[U^{-1} U \steq 1_{\fanout{(K,\alpha)}{m}} \enspace , \] consider a $((L,\beta),(f,\gamma)): \fanout{(K,\alpha)}{m}$. 
We get that 
\[ U^{-1} U((L,\beta),(f,\gamma)) \steq ((L,\beta'),(f,\gamma'))\] 
and
\[\gamma^{-1} * \gamma': \prd{F: \fanout{L}{0}}  \beta (F) \steq \beta'(F)  
\]
By function extensionality, $\beta \steq \beta'$ and by UIP and function extensionality, $\gamma \steq \gamma'$.
\end{proof}

\begin{lemma}\label{lem:fibleftcan}
Consider $\L,\M,\N:\IC(p)$. 
Consider functors $F$ from $\L$ to $\M$ and $G$ from $\M$ to $\N$ such that $G \circ F$ and $G$ are discrete opfibrations. Then $F$ is a discrete opfibration.
\end{lemma}
\begin{proof}
Consider a
 $K:\L(n)$. The following strictly commutative diagram shows that $F$ is an isomorphism on fanouts, and thus a discrete opfibration.
 \[ \raisebox{1.5cm}{\xymatrix{
 \fanout{K}{m} \ar[r]^{F} \ar[dr]_{G \circ F}^\cong &  \fanout{FK}{m} \ar[d]^{G}_\cong\\
 & \fanout{GFK}{m} 
 }}\qedhere \]
\end{proof}

\begin{proposition}\label{prop:deriv-opfib}
The functor $\derivcat{H}{h}: \derivcat{\L}{L} \to \derivcat{\M}{M}$ from \Cref{def:derivation_functorial_action} is a discrete opfibration.
\end{proposition}

\begin{proof}%[Proof of \Cref{prop:deriv-opfib}]
The following square commutes.
\[
  \begin{tikzcd}
    \derivcat{\L}{L} \ar[r,"\derivcat{H}{h}"] \ar[d,"U"'] & \derivcat{\M}{M} \ar[d,"U"] \\
    \L_{>0} \ar[r,"{H_{>0}}"'] & \M_{>0}
  \end{tikzcd}
\]
Note that since $H$ is a discrete opfibration, so is ${H_{>0}}$.
Since both instances of $U$ are also discrete opfibrations (\Cref{lem:Uisfib}), we find (using \Cref{lem:fibleftcan}) that $\derivcat{H}{h}$ is a discrete opfibration.
\end{proof}

We now have all the ingredients for our translation. % from FOLDS-signatures to abstract signatures:

\begin{theorem}\label{thm:translation}
For each $p: \Nat^s$, define an s-functor $E_p: \IC(p) \to \Sig(p)$ by induction on $p$ as follows.

Since $\Sig(0)$ is the trivial category on $\onetype$, there is a unique s-functor $\IC(0) \to \Sig(0)$ (which is actually an equivalence).

For $p > 0$ and $\L: \IC(p)$, let $E_p(\L)$ consist of:
\begin{enumerate}
\item The type $\bottom{\L} : \U$.
\item The functor $E_{p-1}  \derivcat{\L}{-}: (\bottom{\L} \to \U) \to \Sig(p-1)$ defined on objects as in \Cref{app:defn:derivcat} and \Cref{prop:deriv-good}
and on morphisms as 
% $E_{h-1}  \derivcat{1_\L}{-}: \prd{M,N: \bottom{L} \to \U} \prd{z: \bottom{L} } (Mz \to Nz) \to \hom(\derivcat{\L}{M} , \derivcat{\L}{N})$ defined 
in \Cref{def:derivation_functorial_action} and \Cref{prop:deriv-opfib}.
\end{enumerate}
For $\L,\M: \IC(p)$ and $F: \hom(\L,\M)$, let $E_p (F)$ consist of:
\begin{enumerate}
\item The function $\bottom{F} : \bottom{\L} \to \bottom{\M}$.
\item The natural transformation with underlying function 
\[E_{p-1}  \derivcat{F}{\lambda x. 1_{-  x}}: \prd{M : \bottom{\M} \to \U} \hom(\derivcat{\L}{M \circ \bottom{F}},\derivcat{\M}{M}) 
\]
defined in \Cref{def:derivation_functorial_action} and \Cref{prop:deriv-opfib}.
\end{enumerate}
\end{theorem}

\begin{proof}%[Proof of \Cref{thm:translation}]
We check that $E_p$ is functorial. 

For any $\L : \IC(p)$, we have the following.
\begin{align*} 
\pi_1 E_p(1_\mathcal L) &\steq 1_{\bottom{\L}} \\
& \steq \pi_1(1_{E_p \mathcal L}) \\
\pi_2 E_p(1_\mathcal L) &\steq E_{p-1} \circ \derivcat{1_\L}{\lambda x. 1_{-  x}}\\
& \steq 1_{E_{p-1} \derivcat{1_\L}{ \lambda x. 1_{-  x}}} \\
& \steq \pi_2(1_{E_p \mathcal L}) 
\end{align*}

For any $\M, \N, \mathcal P: \IC(p)$, $F: \hom(\M, \N)$, $G: \hom(\N,\mathcal P)$, we have the following.
\begin{align*} 
\pi_1 E_p(G \circ F) &\steq \bottom{(G \circ F)} \\
& \steq \bottom{G} \circ \bottom{F} \\
& \steq \pi_1 (E_p G \circ E_p F) \\
\left(  \pi_2 E_p(G \circ F) \right)(M) &\steq E_{p-1} \circ \derivcat{(G \circ F)}{\lambda x. 1_{M  x}}\\
&\steq E_{p-1}  (\derivcat{G}{\lambda x. 1_{M  x}} \circ \derivcat{F}{\lambda x. 1_{M  x}})\\
& \steq (E_{p-1}  \derivcat{G}{\lambda x. 1_{M  x}}) \circ (E_{p-1}  \derivcat{F}{\lambda x. 1_{M  x}}) \\
& \steq \pi_2 E_{p-1} G \circ \pi_2 E_{p-1} F. \qedhere
\end{align*}
\end{proof}

Intuitively, this can be thought of as mapping into the s-category \emph{coinductively} defined by a derivative functor, with the result landing inside the inductive part (our abstract signatures) because our FOLDS-signatures have finite height.

\begin{remark}
 We expect structures, morphisms of structures, isomorphisms of structure, and split-surjective equivalences of structures over the abstract signature induced by a FOLDS-signature to correspond to Reedy fibrant diagrams, natural transformations of such, levelwise equivalences, and ``Reedy split-surjections'' (cf.\ \cite{shulman_2015, shulman_univalence_ei, 2LTT}).  Proving this precisely is left for future work.
\end{remark}

% Local Variables:
% TeX-master: "hsip-arxiv"
% End:

%% Bibliography
\bibliography{../foldssatrefs}

\end{document}